\newtheorem{theorem}{Theorem}[section]
\newtheorem{corollary}[theorem]{Corollary}
\newtheorem{definition}[theorem]{Definition}
\newtheorem{lemma}[theorem]{Lemma}
\newtheorem{nonGlobalClaim}{Claim}[theorem]  %for claims that are part of a larger theorem, but which don't hold absolutely
\newtheorem{observation}[theorem]{Observation}
\newtheorem{question}[theorem]{Question}
\newtheorem{fact}[theorem]{Fact}
\newtheorem{convention}[theorem]{Convention}
\newtheorem{remark}[theorem]{Remark}
\newtheorem{LC_Assumption}[theorem]{Large Cardinal Assumption}
\begin{document}
\title[]{Maximum deconstructibility in module categories}

\author{Sean Cox}
\email{scox9@vcu.edu}
\address{
Department of Mathematics and Applied Mathematics \\
Virginia Commonwealth University \\
1015 Floyd Avenue \\
Richmond, Virginia 23284, USA 
}

\thanks{Many thanks to Jan \v{S}aroch for pointing out a significant error in a previous version of this paper.  The author is also grateful to Marco Aldi for helpful conversations about homological algebra, and to the anonymous referee for numerous improvements.}

\keywords{Gorenstein projective, deconstructible, precovering, stationary logic, elementary submodel}

\subjclass[2010]{03E75, 16E30,16D40,	16D90, 18G25, 16B70}

\begin{abstract}
We prove that Vop\v{e}nka's Principle implies that for every class $\mathfrak{X}$ of modules over any ring, the class of \textbf{$\boldsymbol{\mathfrak{X}}$-Gorenstein Projective modules} (\textbf{$\boldsymbol{\mathfrak{X}}$-$\boldsymbol{\mathcal{GP}}$}) is a precovering class.  In particular, it is not possible to prove (unless Vop\v{e}nka's Principle is inconsistent) that there is a ring over which the \textbf{Ding Projectives} ($\boldsymbol{\mathcal{DP}}$) or the \textbf{Gorenstein Projectives} ($\boldsymbol{\mathcal{GP}}$) do not form a precovering class (\v{S}aroch previously obtained this result for the class $\mathcal{GP}$, using different methods).  The key innovation is a new ``top-down" characterization of \emph{deconstructibility}, which is a well-known sufficient condition for a class to be precovering.  We also prove that Vop\v{e}nka's Principle implies, in some sense, the maximum possible amount of deconstructibility in module categories.

\end{abstract}

\maketitle

%\tableofcontents

\section{Introduction}\label{sec_Intro}

Given a ring $R$ (associative and unital), the class $\mathcal{P}_0$ of projective (left $R$-)modules plays a central role in classical homological algebra.  For a class $\mathcal{F}$ of modules, ``homological algebra relative to $\mathcal{F}$" attempts to employ similar methods, but with $\mathcal{F}$ playing the same role that $\mathcal{P}_0$ played in the classical setting.  For example, one uses $\mathcal{F}$-resolutions instead of projective resolutions, and attempts to define the $\text{Ext}^n$ functors ``relative to $\mathcal{F}$".  But in order for this to work---e.g., for the relative Ext functor to be independent of the $\mathcal{F}$-resolution---it is essential that $\mathcal{F}$ be a \emph{precovering} class (also called \emph{right-approximating} class).

Around the turn of the millennium, the notion of a \emph{deconstructible} class grew out of the solution of the Flat Cover Conjecture (Eklof-Trlifaj~\cite{MR1798574}; Bican-El Bashir-Enochs~\cite{MR1832549}).  Deconstructible classes are always precovering (\cite[Theorem 7.21]{MR2985554}), and showing that a class is deconstructible has become one of the main tools for showing the class is precovering.

We prove a new ``top-down" characterization of deconstructibility (Theorem \ref{thm_CharacterizeDecon}), and give two main applications, summarized by \ref{item_VP_X_GP} and \ref{item_VP_MaxDecon} below.  The applications are relative consistency results.  For example, \ref{item_VP_X_GP} implies that you won't be able to prove (in ZFC alone) that there exists a ring over which the Ding Projectives or the Gorenstein Projectives do not form precovering classes, unless \textbf{Vop\v{e}nka's Principle (VP)} is inconsistent.\footnote{Inconsistency of VP is generally regarded as highly unlikely; see the historical remarks in Chapter 6 of Ad\'{a}mek-Rosick\'{y}~\cite{MR1294136} for an amusing history of VP, which the authors describe as ``a practical joke which misfired".}

\begin{enumerate}[label=(\Roman*)]
  \item\label{item_VP_X_GP} VP implies that for \emph{any} class $\mathfrak{X}$ of modules (over any ring), the class $\boldsymbol{\mathfrak{X}\textbf{-}\mathcal{GP}}$ of ``$\mathfrak{X}$-Gorenstein Projective modules" is deconstructible, and hence precovering.  The general definition of $\mathfrak{X}$-$\mathcal{GP}$ appears in Section \ref{sec_GorenQuot}, but two particular cases are widely referenced in the literature:  the class of \textbf{Gorenstein Projectives} ($\boldsymbol{\mathcal{GP}}$) when $\mathfrak{X}=\{ \text{projectives} \}$ and the class of \textbf{Ding Projectives} ($\boldsymbol{\mathcal{DP}}$) when $\mathfrak{X} = \{ \text{flats} \}$.\footnote{The Ding Projectives were originally called \emph{Strong Gorenstein Flat} modules in Ding-Li-Mao~\cite{MR2529328}.}  Whether $\mathcal{GP}$, $\mathcal{DP}$, and/or other classes of the form $\mathfrak{X}$-$\mathcal{GP}$ are precovering,  has been asked or addressed repeatedly, often with partial results under various constraints on the ring; see, for example, \cite{MR2529328}, \cite{MR3621667}, \cite{MR1363858}, \cite{MR1753146}, \cite{MR3690524},  \cite{MR3598789}, \cite{MR3459032}, \cite{MR3760311}, \cite{MR2038564}, \cite{MR3839274}, \cite{MR4115324}, \cite{MR2283103},  \cite{MR2737778}, \cite{MR3473859}, \cite{MR4166732}.  The consistency result for the particular class $\mathcal{GP}$ was shown earlier by \v{S}aroch, via different methods that do not seem to generalize to arbitrary $\mathfrak{X}$-$\mathcal{GP}$'s; see the discussion after Theorem \ref{thm_Saroch}.

  \item\label{item_VP_MaxDecon} (``Maximum deconstructibility"):  VP implies that every class of modules, or of complexes of modules, that could conceivably be deconstructible, is in fact deconstructible.  See Lemma \ref{lem_QuotNecCondition} and Theorem \ref{thm_MainThm} for precisely what is meant by this.  
\end{enumerate}

We now describe the main results in more detail.  See Section \ref{sec_Prelim_ElemSub} for unexplained set-theoretic notation, Section \ref{sec_AlgRef} for the notion of $<\kappa$-Noetherian ring, and Section \ref{sec_ProofCharDecon} for the meaning of deconstructibility.

\begin{theorem}[characterization of deconstructibility]\label{thm_CharacterizeDecon}
Suppose $\kappa$ is a regular uncountable cardinal, $R$ is a $<\kappa$-Noetherian ring, and $\mathcal{F}$ is a class of $R$-modules.  Consider the following statements:

\begin{enumerate}[label=(\Roman*)$_\kappa$]
 \item\label{item_KappaDecon} $\mathcal{F}$ is strongly $<\kappa$-deconstructible.

 \item\label{item_ElemSub} Whenever $\mathcal{N}$ is an elementary submodel of $(H_\theta,\in,R,\kappa, \mathcal{F} \cap H_\theta )$ and $\mathcal{N} \cap \kappa $ is transitive---i.e., either $\mathcal{N} \cap \kappa \in \kappa$, or $\kappa \subset \mathcal{N}$---then for all $R$-modules $M$: 
 \begin{equation*}\label{eq_NandfracinF}
M \in \mathcal{N} \cap \mathcal{F} \ \implies  \langle \mathcal{N} \cap M \rangle \in \mathcal{F} \text{ and } \frac{M}{\langle \mathcal{N} \cap M \rangle} \in \mathcal{F}. 
 \end{equation*}

 \item\label{item_ElemSub_PAIRS} Whenever $\mathcal{N} \subseteq \mathcal{N}'$ are both elementary submodels of $(H_\theta,\in,R,\kappa, \mathcal{F} \cap H_\theta )$ and both $\mathcal{N} \cap \kappa$ and $\mathcal{N}' \cap \kappa$ are transitive, then for all $R$-modules $M$:
 
 \begin{equation*}
M \in \mathcal{N} \cap \mathcal{F} \ \implies \   \langle \mathcal{N} \cap M \rangle \in \mathcal{F} \text{ and } \frac{\langle \mathcal{N}' \cap M \rangle}{\langle \mathcal{N} \cap M \rangle} \in \mathcal{F}. 
 \end{equation*}

\end{enumerate}

\noindent The following implications always hold:
\begin{center}
\ref{item_KappaDecon}   $\Longleftarrow$  \ref{item_ElemSub} $\Longleftarrow$ \ref{item_ElemSub_PAIRS}.
\end{center}
If $\mathcal{F}$ is closed under transfinite extensions, then 
\begin{center}
\ref{item_KappaDecon}    $\iff$ \ref{item_ElemSub}  $\iff$ \ref{item_ElemSub_PAIRS}.
\end{center}
\end{theorem}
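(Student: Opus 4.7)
The plan is to verify $\ref{item_ElemSub_PAIRS} \Rightarrow \ref{item_ElemSub} \Rightarrow \ref{item_KappaDecon}$ unconditionally, and then establish $\ref{item_KappaDecon} \Rightarrow \ref{item_ElemSub_PAIRS}$ under the additional assumption that $\mathcal{F}$ is closed under transfinite extensions.

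For $\ref{item_ElemSub_PAIRS} \Rightarrow \ref{item_ElemSub}$: Given any $\mathcal{N}$ satisfying the hypothesis of $\ref{item_ElemSub}$ and any $M \in \mathcal{N} \cap \mathcal{F}$, use downward L\"owenheim--Skolem to extend $\mathcal{N}$ to an elementary $\mathcal{N}' \supseteq \mathcal{N}$ with $\kappa \cup M \subseteq \mathcal{N}'$. Then $\mathcal{N}' \cap \kappa = \kappa$ is trivially transitive and $\langle \mathcal{N}' \cap M \rangle = M$, so applying $\ref{item_ElemSub_PAIRS}$ to the pair $\mathcal{N} \subseteq \mathcal{N}'$ yields precisely the conclusion of $\ref{item_ElemSub}$.

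For $\ref{item_ElemSub} \Rightarrow \ref{item_KappaDecon}$: Given $M \in \mathcal{F}$ and $\theta$ sufficiently large, I would build a continuous elementary chain $(\mathcal{N}_\alpha)_{\alpha \leq \lambda}$ of elementary submodels of $(H_\theta, \in, R, \kappa, \mathcal{F}\cap H_\theta)$ with each $|\mathcal{N}_\alpha| < \kappa$, each $\mathcal{N}_\alpha \cap \kappa$ transitive (arranged by a standard closure argument using regularity of $\kappa$), $M \in \mathcal{N}_0$, and $M \subseteq \mathcal{N}_\lambda$. Setting $M_\alpha := \langle \mathcal{N}_\alpha \cap M \rangle$ produces a continuous filtration $0 = M_0 \subseteq \cdots \subseteq M_\lambda = M$. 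By $\ref{item_ElemSub}$ each $M_\alpha \in \mathcal{F}$, each $M_\alpha$ is $<\kappa$-generated (hence $<\kappa$-presented, using the $<\kappa$-Noetherian hypothesis), and the same applies to the successor quotients $M_{\alpha+1}/M_\alpha$, exhibiting the filtration witnessing strong $<\kappa$-deconstructibility.

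For $\ref{item_KappaDecon} \Rightarrow \ref{item_ElemSub_PAIRS}$ (assuming closure under transfinite extensions): Fix $\mathcal{N} \subseteq \mathcal{N}'$ as in the hypothesis and $M \in \mathcal{N} \cap \mathcal{F}$. By deconstructibility, and working inside $\mathcal{N}$, choose a filtration $(M_\beta)_{\beta < \mu}$ of $M$ coded by an element of $\mathcal{N}$ whose successor quotients lie in $\mathcal{F}$ and are $<\kappa$-presented. The key observation, proved by a Hill-lemma-style argument, is that
\[
\langle \mathcal{N} \cap M \rangle \;=\; \bigcup_{\beta \in \mathcal{N} \cap \mu} M_\beta,
\]
and analogously for $\mathcal{N}'$; transitivity of $\mathcal{N} \cap \kappa$ is essential here because it forces every $<\kappa$-presented object $M_\beta$ with $\beta \in \mathcal{N}$ to have all of its elements (coded by $<\kappa$ ordinals) in $\mathcal{N}$. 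Because $\mathcal{N} \subseteq \mathcal{N}'$, the filtration of $\langle \mathcal{N}' \cap M \rangle$ refines that of $\langle \mathcal{N} \cap M \rangle$, so $\langle \mathcal{N}' \cap M \rangle / \langle \mathcal{N} \cap M \rangle$ inherits a filtration whose successor quotients are isomorphic to $M_{\beta+1}/M_\beta$ for indices $\beta \in (\mathcal{N}' \setminus \mathcal{N}) \cap \mu$, hence lie in $\mathcal{F}$. Closure under transfinite extensions then places the quotient in $\mathcal{F}$.

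The main obstacle will be the last step, specifically the reflection identity $\langle \mathcal{N} \cap M \rangle = \bigcup_{\beta \in \mathcal{N} \cap \mu} M_\beta$ and its compatible version for the pair $\mathcal{N}\subseteq\mathcal{N}'$. Verifying this requires a careful Hill-lemma-style bookkeeping repackaged in the language of elementary submodels, leveraging elementarity to locate generators inside the filtration and transitivity of $\mathcal{N} \cap \kappa$ to absorb entire $<\kappa$-presented pieces into $\mathcal{N}$.
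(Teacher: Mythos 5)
Your overall architecture (proving \ref{item_ElemSub_PAIRS} $\Rightarrow$ \ref{item_ElemSub} $\Rightarrow$ \ref{item_KappaDecon} unconditionally and \ref{item_KappaDecon} $\Rightarrow$ \ref{item_ElemSub_PAIRS} under closure under transfinite extensions) matches the paper, and your \ref{item_ElemSub_PAIRS} $\Rightarrow$ \ref{item_ElemSub} step is fine. But there are two genuine gaps. First, in \ref{item_ElemSub} $\Rightarrow$ \ref{item_KappaDecon}, the chain you describe cannot exist once $|M|>\kappa$: a $\subseteq$-continuous chain whose union covers $M$ must have at least $|M|$ many stages of proper growth, so by continuity some initial segment already has size $\ge\kappa$, contradicting the demand that every $\mathcal{N}_\alpha$ have size $<\kappa$. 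The paper instead inducts on the presentation cardinality $\lambda$ of $M$: it builds a smooth $\in$-chain of $<\lambda$-sized (not $<\kappa$-sized) models of length $\mathrm{cf}(\lambda)$, obtains an $\mathcal{F}$-filtration whose factors may be too large, and refines each factor via the induction hypothesis. Relatedly, even granting a chain, \ref{item_ElemSub} says nothing directly about $\langle\mathcal{N}_{\alpha+1}\cap M\rangle/\langle\mathcal{N}_\alpha\cap M\rangle$; one needs the chain to be an $\in$-chain, so that $M/\langle\mathcal{N}_\alpha\cap M\rangle$ (which is in $\mathcal{F}$ by one application of \ref{item_ElemSub}) is an \emph{element} of $\mathcal{N}_{\alpha+1}$, and then a second application of \ref{item_ElemSub} together with Lemma \ref{lem_IntersectQuotient} identifies $\big\langle\mathcal{N}_{\alpha+1}\cap (M/\langle\mathcal{N}_\alpha\cap M\rangle)\big\rangle$ with the successor quotient. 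Your "the same applies to the successor quotients" skips this double application.

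Second, in \ref{item_KappaDecon} $\Rightarrow$ \ref{item_ElemSub_PAIRS}, your key identity $\langle\mathcal{N}\cap M\rangle=\bigcup_{\beta\in\mathcal{N}\cap\mu}M_\beta$ is false: the filtration pieces $M_\beta$ are not $<\kappa$-presented (only the successive \emph{quotients} are), so $M_\beta\in\mathcal{N}$ does not force $M_\beta\subseteq\mathcal{N}$. Concretely, take $R=\mathbb{Z}$, $\kappa=\aleph_1$, a filtration of length $\omega_1\cdot 2$ with factors $\mathbb{Z}$, and $\mathcal{N}$ countable: then $M_{\omega_1}\in\mathcal{N}$ is uncountable, while $\langle\mathcal{N}\cap M\rangle$ is countable, so the right-hand side strictly contains the left. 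The correct statement is $\langle\mathcal{N}\cap M\rangle=\bigcup_{\beta\in\mathcal{N}\cap\mu}\langle\mathcal{N}\cap M_\beta\rangle$, with $\langle\mathcal{N}\cap M_{\beta+1}\rangle/\langle\mathcal{N}\cap M_\beta\rangle\cong M_{\beta+1}/M_\beta$ (here transitivity of $\mathcal{N}\cap\kappa$ absorbs a $<\kappa$-sized generating set of the \emph{quotient}, and the $<\kappa$-Noetherian hypothesis enters via Lemma \ref{lem_IntersectQuotient}). The relative quotient $\langle\mathcal{N}'\cap M\rangle/\langle\mathcal{N}\cap M\rangle$ is then the genuinely delicate point: the paper filters it by $\big\langle(\mathcal{N}\cap M)\cup(\mathcal{N}'\cap M_\xi)\big\rangle/\langle\mathcal{N}\cap M\rangle$ for $\xi\in\mathcal{N}'\cap\eta$ and shows, by splitting into the cases $\xi\in\mathcal{N}$ and $\xi\notin\mathcal{N}$ and computing a kernel, that each factor is either trivial or isomorphic to $M_{\xi+1}/M_\xi$. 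Your claim that the $\mathcal{N}'$-filtration simply "refines" the $\mathcal{N}$-filtration rests on the false identity; a genuine Hill-lemma route could also work, but the members of the Hill family are sums of selected generator sets, not unions of the $M_\beta$.
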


\noindent In particular, since every ring $R$ is (at worst) $<|R|^+$-Noetherian, we have:
\begin{corollary} 
If $\mathcal{F}$ is a class of modules closed under transfinite extensions, then $\mathcal{F}$ is deconstructible if and only if there exists a regular $\kappa$ such that \ref{item_ElemSub} holds.
\end{corollary}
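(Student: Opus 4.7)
The plan is to extract both directions of the corollary from Theorem \ref{thm_CharacterizeDecon} by carefully arranging for the regularity/Noetherianness hypothesis on $\kappa$ to hold. The preamble to the corollary supplies the key arithmetic input: any $\kappa \ge |R|^+$ automatically makes $R$ a $<\kappa$-Noetherian ring.

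For the ($\Leftarrow$) direction, suppose there is a regular $\kappa$ for which \ref{item_ElemSub} holds. I would first argue that we may freely assume $\kappa \ge |R|^+$. If the given $\kappa$ is smaller, I would replace it by any regular $\kappa' \ge |R|^+$ and check that \ref{item_ElemSub} is upward-absolute in $\kappa$: given $\mathcal{N} \prec (H_\theta,\in,R,\kappa',\mathcal{F}\cap H_\theta)$ with $\mathcal{N}\cap\kappa'$ transitive, either $\kappa' \subseteq \mathcal{N}$ (in which case $\kappa \subseteq \mathcal{N}$ also), or $\mathcal{N}\cap\kappa' \in \kappa'$, in which case $\mathcal{N}\cap\kappa$ is also an ordinal (a straightforward ordinal-arithmetic check using transitivity), so the hypothesis of \ref{item_ElemSub} at level $\kappa$ applies to $\mathcal{N}$ and yields the same conclusion. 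Once $\kappa \ge |R|^+$, the ring $R$ is $<\kappa$-Noetherian, and Theorem \ref{thm_CharacterizeDecon} gives \ref{item_ElemSub} $\Rightarrow$ \ref{item_KappaDecon}, so $\mathcal{F}$ is strongly $<\kappa$-deconstructible, hence in particular deconstructible.

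For the ($\Rightarrow$) direction, suppose $\mathcal{F}$ is deconstructible, so there is some regular $\kappa_0$ for which $\mathcal{F}$ is $<\kappa_0$-deconstructible. I would choose a regular $\kappa \ge \max(\kappa_0,|R|^+)$ large enough that (a) $R$ is $<\kappa$-Noetherian, and (b) $\mathcal{F}$ is \emph{strongly} $<\kappa$-deconstructible. Part (a) is the arithmetic remark just preceding the corollary. Part (b) is the standard upgrade from deconstructibility to strong deconstructibility available under closure under transfinite extensions; concretely, any $<\kappa_0$-deconstructibility witness can be refined (by iterating the filtration and closing off under sums of $<\kappa$-many generators) to a strongly $<\kappa$-deconstructible witness for some $\kappa$ not too much larger than $\kappa_0$. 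Once we have \ref{item_KappaDecon} at a $\kappa$ for which $R$ is $<\kappa$-Noetherian, Theorem \ref{thm_CharacterizeDecon} (using the closure hypothesis) gives \ref{item_KappaDecon} $\Rightarrow$ \ref{item_ElemSub}, producing the required $\kappa$.

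The main obstacle I anticipate is bookkeeping rather than conceptual: verifying that \ref{item_ElemSub} is preserved when $\kappa$ is enlarged (for the $(\Leftarrow)$ direction) and that ``deconstructible'' may be promoted to ``strongly $<\kappa$-deconstructible'' at some sufficiently large $\kappa$ (for the $(\Rightarrow)$ direction). Both steps are essentially routine under the transfinite-extension hypothesis, so the corollary should be only a short remark following Theorem \ref{thm_CharacterizeDecon}.
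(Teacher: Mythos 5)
Your ($\Rightarrow$) direction is fine and is essentially the intended argument: deconstructibility can always be upgraded to \emph{strong} $<\kappa$-deconstructibility at some regular $\kappa \ge |R|^+$ (the paper makes exactly this observation in Section \ref{sec_ProofCharDecon}, and strong $<\kappa$-deconstructibility is trivially upward monotone in $\kappa$ since $\mathcal{F}^{<\kappa}_s \subseteq \mathcal{F}^{<\kappa'}_s$), at which point $R$ is $<\kappa$-Noetherian and the \ref{item_KappaDecon} $\implies$ \ref{item_ElemSub} direction of Theorem \ref{thm_CharacterizeDecon} applies.

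The ($\Leftarrow$) direction, however, has a genuine gap in the claimed ``upward absoluteness'' of \ref{item_ElemSub} in $\kappa$. Your ordinal-arithmetic check correctly shows that if $\mathcal{N} \cap \kappa'$ is transitive then so is $\mathcal{N} \cap \kappa$, but that is only half of the hypothesis of \ref{item_ElemSub} at level $\kappa$: the other half is that $\mathcal{N}$ be an elementary submodel of the structure $(H_\theta,\in,R,\kappa,\mathcal{F}\cap H_\theta)$, in which $\kappa$ is a distinguished constant. An elementary submodel of $(H_\theta,\in,R,\kappa',\mathcal{F}\cap H_\theta)$ need not contain $\kappa$ as an element at all (unlike $\omega_1$, an arbitrary regular $\kappa$ is not definable in the $\kappa'$-structure), and a submodel not containing $\kappa$ cannot be elementary in the $\kappa$-structure. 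So \ref{item_ElemSub} at level $\kappa$ simply gives no information about such an $\mathcal{N}$, and the step ``the hypothesis of \ref{item_ElemSub} at level $\kappa$ applies to $\mathcal{N}$'' fails. The intended repair does not pass through enlarging $\kappa$ at all: as recorded in Remark \ref{rem_AlternativeCharDecon} and footnote \ref{fn_WeakVersion}, the proof of \ref{item_ElemSub} $\implies$ \ref{item_KappaDecon} goes through \emph{without} the $<\kappa$-Noetherian hypothesis on $R$ provided one only asks for \emph{weak} $<\kappa$-deconstructibility (the Noetherian hypothesis is used only to upgrade ``$|\mathcal{N}_{\xi+1}|$-generated'' to ``strongly $|\mathcal{N}_{\xi+1}|$-presented'' for the filtration factors). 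Since weak $<\kappa$-deconstructibility implies strong $<\lambda$-deconstructibility for some larger $\lambda$, hence deconstructibility, the witness $\kappa$ in \ref{item_ElemSub} may be used directly, however small it is relative to $|R|$.
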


\noindent Parts \ref{item_ElemSub} and \ref{item_ElemSub_PAIRS} could also be expressed in terms of an appropriate interpretation of Shelah's \emph{Stationary Logic}.  Though we will not make use of this connection, there is a brief discussion of it after Definition \ref{def_KappaAE}.  There is also a version of Theorem \ref{thm_CharacterizeDecon} for classes of \emph{complexes} of modules (Theorem \ref{thm_CharacterizeDecon_COMPLEXES} on page \pageref{thm_CharacterizeDecon_COMPLEXES}), though that takes a little more background to state.

We use Theorem \ref{thm_CharacterizeDecon} to isolate a necessary condition for deconstructibility, at least among classes that are closed under transfinite extensions:
\begin{lemma}[necessary condition for deconstructibility]\label{lem_QuotNecCondition}
If $\mathcal{F}$ is a class of modules (or complexes of modules) that is deconstructible and closed under transfinite extensions, then $\mathcal{F}$ is ``eventually almost everywhere closed under quotients" (Definition \ref{def_KappaAE} on page \pageref{def_KappaAE}).
\end{lemma}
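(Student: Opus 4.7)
The plan is to derive the lemma essentially as an immediate consequence of Theorem \ref{thm_CharacterizeDecon} (for modules), or of its complex-valued analogue Theorem \ref{thm_CharacterizeDecon_COMPLEXES}. The crucial observation is that the second conclusion in clause \ref{item_ElemSub}, namely $M/\langle \mathcal{N} \cap M \rangle \in \mathcal{F}$, is \emph{literally} a statement that $\mathcal{F}$ is closed under certain quotients parametrized by an elementary submodel $\mathcal{N}$. So the quotient half of \ref{item_ElemSub} already reads like an ``almost everywhere closed under quotients'' statement, and the job is just to align it with Definition \ref{def_KappaAE}.

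First I would assume $\mathcal{F}$ is deconstructible and closed under transfinite extensions. By definition of deconstructibility there is a regular uncountable $\kappa$ (which, by the standard monotonicity of deconstructibility in $\kappa$, can be taken as large as desired) such that $\mathcal{F}$ is strongly $<\kappa$-deconstructible, i.e., clause \ref{item_KappaDecon}$_\kappa$ holds. Because $\mathcal{F}$ is closed under transfinite extensions, the full equivalence in Theorem \ref{thm_CharacterizeDecon} applies and upgrades this to clause \ref{item_ElemSub}$_\kappa$: whenever $\mathcal{N}$ is an elementary submodel of $(H_\theta,\in,R,\kappa,\mathcal{F}\cap H_\theta)$ with $\mathcal{N} \cap \kappa$ transitive, the quotient $M/\langle \mathcal{N} \cap M \rangle$ lies in $\mathcal{F}$ for every $M \in \mathcal{N} \cap \mathcal{F}$. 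Since this holds for all sufficiently large $\kappa$, the ``eventually'' quantifier of Definition \ref{def_KappaAE} is delivered automatically.

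What remains is to check that the ``almost everywhere'' quantifier is also satisfied. For any sufficiently large $\theta$ (large enough that $R,\kappa, \mathcal{F}\cap H_\theta \in H_\theta$), the set of elementary submodels $\mathcal{N}$ of $(H_\theta,\in,R,\kappa,\mathcal{F}\cap H_\theta)$ with $\mathcal{N} \cap \kappa$ transitive contains a club in the relevant sense, by a standard argument about elementary submodels (either $\mathcal{N}\cap\kappa\in\kappa$ for $|\mathcal{N}|<\kappa$, or $\kappa\subseteq\mathcal{N}$ once $|\mathcal{N}|\geq\kappa$); this is precisely the measure-one set picked out by the stationary-logic interpretation mentioned after Definition \ref{def_KappaAE}. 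The complex case is handled identically, substituting Theorem \ref{thm_CharacterizeDecon_COMPLEXES} for Theorem \ref{thm_CharacterizeDecon}.

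The only real obstacle I anticipate is purely definitional: checking that the precise formulation of ``eventually almost everywhere closed under quotients'' in Definition \ref{def_KappaAE} has been framed so that the quantifier structure ``for all sufficiently large $\kappa$, on a club of elementary submodels of $(H_\theta,\in,R,\kappa,\mathcal{F}\cap H_\theta)$ with $\mathcal{N}\cap\kappa$ transitive'' matches what is extracted above. Once Definition \ref{def_KappaAE} is set up in these terms, the lemma is a direct transcription of the quotient clause of \ref{item_ElemSub}, with no further homological or algebraic input required.
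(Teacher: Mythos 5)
There is a genuine gap: you are reading off the wrong clause of Theorem \ref{thm_CharacterizeDecon}. Definition \ref{def_KappaAE} does not ask for closure under quotients of the form $M/\langle \mathcal{N}\cap M\rangle$; it asks that for every \emph{pair} $\mathcal{N}\subseteq\mathcal{N}'$ of elementary submodels (both with transitive intersection with $\kappa$, and with $\mathcal{N}'$ possibly a small submodel rather than all of $H_\theta$), the relative quotient $\langle \mathcal{N}'\cap M\rangle/\langle \mathcal{N}\cap M\rangle$ lies in $\mathcal{F}$ whenever $M$, $\langle \mathcal{N}\cap M\rangle$, and $\langle \mathcal{N}'\cap M\rangle$ all do. Clause \ref{item_ElemSub}, which is all your argument extracts, only yields the degenerate instance $\mathcal{N}'=H_\theta$ (this is exactly the content of Observation \ref{obs_EntireQuotient}, and the paper explicitly warns right after that observation that the two-submodel formulation is the point of the definition --- it is what drives the closure argument in Theorem \ref{thm_AbstractImplyDec}). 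So as written your proof establishes a strictly weaker statement than the lemma. Your closing remark that the lemma is ``a direct transcription of the quotient clause of \ref{item_ElemSub}'' is precisely where the mismatch occurs.

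The repair is short and is in fact the paper's entire proof: under closure under transfinite extensions, the relevant implication is \ref{item_KappaDecon} $\implies$ \ref{item_ElemSub_PAIRS} (the pairs version), whose conclusion $\langle \mathcal{N}'\cap M\rangle/\langle \mathcal{N}\cap M\rangle\in\mathcal{F}$ matches the consequent of Definition \ref{def_KappaAE} (indeed \ref{item_ElemSub_PAIRS} is stronger, since it does not even need the hypotheses $\langle \mathcal{N}\cap M\rangle\in\mathcal{F}$ and $\langle \mathcal{N}'\cap M\rangle\in\mathcal{F}$). Taking $\kappa$ large enough that $R$ is $<\kappa$-Noetherian and $\mathcal{F}$ is strongly $<\kappa$-deconstructible handles the ``eventually'' quantifier, as you say. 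A final small point: your paragraph about clubs and the ``almost everywhere'' quantifier is not needed --- Definition \ref{def_KappaAE} is stated as a universally quantified implication over all (pairs of) elementary submodels with transitive intersection with $\kappa$, not as a statement about a stationary or club set; the ``a.e.'' is only terminological, motivated by the stationary-logic reading.
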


Part \ref{item_MaxDecon} of the following theorem says that the necessary condition for deconstructibility given in Lemma \ref{lem_QuotNecCondition} is also sufficient, provided that Vop\v{e}nka's Principle holds.

\begin{theorem}\label{thm_MainThm}
Vop\v{e}nka's Principle implies:\footnote{The conclusions of Theorem \ref{thm_MainThm} are not first order statements, because they quantify over classes that do not have uniform definitions.  They are really schemes (as is VP itself).  See Remark \ref{rem_MetaMath} on page \pageref{rem_MetaMath} for exactly what is meant by this theorem.}  
\begin{enumerate}[label=(\Alph*)]
 \item\label{item_MaxDecon} (``Maximum deconstructibility") For all classes $\mathcal{K}$ of modules or complexes of modules over any ring:  if $\mathcal{K}$ is eventually almost everywhere closed under quotients and transfinite extensions, then $\mathcal{K}$ is deconstructible.
 \item\label{item_XGP_Decon} For all classes $\mathfrak{X}$ of modules over any ring, the class $\mathfrak{X}$-$\mathcal{GP}$ is deconstructible.
 \item\label{item_MaxKaplansky} For all classes $\mathcal{K}$ of modules or complexes of modules over any ring:  if $\mathcal{K}$ is eventually almost everywhere closed under quotients (but not necessarily under transfinite extensions), then $\mathcal{K}$ is a Kaplansky class.

\end{enumerate}
\end{theorem}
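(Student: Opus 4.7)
All three parts share a common template: use Vopěnka's Principle to upgrade the ``eventually almost everywhere'' quotient-closure hypothesis to ``everywhere'' at a single regular uncountable $\kappa$, then invoke Theorem \ref{thm_CharacterizeDecon} for (A) and (B) and a direct reading of the Kaplansky property for (C).

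For part \ref{item_MaxDecon}, let $\mathcal{K}$ be eventually almost everywhere closed under quotients and closed under transfinite extensions. By Theorem \ref{thm_CharacterizeDecon} it suffices to find one regular uncountable $\kappa$ for which \ref{item_ElemSub_PAIRS} holds, since under the transfinite-extension hypothesis this is equivalent to \ref{item_KappaDecon}. I would argue by contradiction. If no such $\kappa$ works, then at every regular $\kappa$ above the ``eventually'' threshold $\lambda$ of Definition \ref{def_KappaAE} there is a counterexample: an elementary submodel pair $\mathcal{N}_\kappa \subseteq \mathcal{N}'_\kappa$ of $(H_\theta,\in,R,\kappa,\mathcal{K}\cap H_\theta)$ with both intersections with $\kappa$ transitive, and a module $M_\kappa \in \mathcal{N}_\kappa \cap \mathcal{K}$ whose relative quotient $\langle \mathcal{N}'_\kappa \cap M_\kappa \rangle / \langle \mathcal{N}_\kappa \cap M_\kappa \rangle$ misses $\mathcal{K}$, and moreover such that the pair $(\mathcal{N}_\kappa,\mathcal{N}'_\kappa)$ belongs to the forbidden ``measure-zero'' set predicted by the failure of a.e.\ quotient closure. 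Coding each such counterexample as a first-order structure $\mathbf{A}_\kappa$ in a single signature yields a proper class, so VP produces a non-identity elementary embedding $\mathbf{A}_\kappa \to \mathbf{A}_{\kappa'}$ with $\kappa < \kappa'$. Elementarity transfers the transitivity assertions and the quotient-failure formula into $\mathbf{A}_{\kappa'}$; the image of the smaller pair is then a witness of quotient failure lying inside $\mathbf{A}_{\kappa'}$ that is too small to be permitted by the almost-everywhere hypothesis at $\kappa'$, contradicting the choice of $\mathbf{A}_{\kappa'}$.

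Part \ref{item_XGP_Decon} would then follow from \ref{item_MaxDecon} applied to $\mathcal{K} = \mathfrak{X}\text{-}\mathcal{GP}$, once two ZFC-level facts are established: (i) $\mathfrak{X}\text{-}\mathcal{GP}$ is closed under transfinite extensions, a standard property for classes defined via totally acyclic complexes with an exactness condition against $\mathfrak{X}$; and (ii) $\mathfrak{X}\text{-}\mathcal{GP}$ is eventually almost everywhere closed under quotients, which is the algebraic content of Section \ref{sec_GorenQuot} and should be proved by showing that a witnessing totally acyclic complex for $M \in \mathfrak{X}\text{-}\mathcal{GP}$ sitting inside an elementary submodel $\mathcal{N}$ can be ``cut'' by $\mathcal{N}$ to produce a witnessing complex for the quotient $M/\langle \mathcal{N} \cap M \rangle$.

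For part \ref{item_MaxKaplansky}, the same VP argument pins down a regular $\kappa$ at which every admissible elementary submodel $\mathcal{N}$ satisfies $M/\langle \mathcal{N} \cap M \rangle \in \mathcal{K}$ whenever $M \in \mathcal{N} \cap \mathcal{K}$. Without transfinite-extension closure one can no longer deduce $\langle \mathcal{N} \cap M \rangle \in \mathcal{K}$ and so cannot conclude deconstructibility, but taking $\mathcal{N}$ of size $<\kappa$ around any prescribed $<\kappa$-sized subset $S \subseteq M$ gives a $<\kappa$-generated submodule $\langle \mathcal{N} \cap M \rangle \supseteq S$ with quotient in $\mathcal{K}$, which is the Kaplansky condition at parameter $\kappa$. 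The main obstacle throughout is the VP step: the signature of the structures $\mathbf{A}_\kappa$ must encode the transitivity of $\mathcal{N}_\kappa \cap \kappa$, the submodel inclusion, and the first-order failure of quotient-membership precisely enough that elementarity sends the small counterexample into the forbidden region at $\kappa'$. The bookkeeping is standard for applications of VP but is tightly tied to the exact formulation chosen in Definition \ref{def_KappaAE}, so that definition must be crafted to make the first-order transfer automatic.
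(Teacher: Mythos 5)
There is a genuine gap, and it sits at the heart of your VP step. Your contradiction argument assumes that a failure of clause \ref{item_ElemSub_PAIRS} of Theorem \ref{thm_CharacterizeDecon} at every $\kappa$ would produce counterexamples lying in the ``forbidden region'' of the almost-everywhere quotient-closure hypothesis, which could then be transferred by an elementary embedding to contradict that hypothesis at a larger $\kappa'$. But the a.e.\ quotient-closure property of Definition \ref{def_KappaAE} is \emph{conditional}: it only asserts $\frac{M_\bullet \restriction \mathcal{N}'}{M_\bullet \restriction \mathcal{N}} \in \mathcal{K}$ under the extra antecedents $M_\bullet \restriction \mathcal{N} \in \mathcal{K}$ and $M_\bullet \restriction \mathcal{N}' \in \mathcal{K}$. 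A counterexample to \ref{item_ElemSub_PAIRS} in which $\langle \mathcal{N} \cap M \rangle \notin \mathcal{K}$ (or $M_\bullet\restriction\mathcal{N}\notin\mathcal{K}$) violates nothing in Definition \ref{def_KappaAE}, at $\kappa$ or at any $\kappa'$, so no contradiction ever materializes. The actual content of the large-cardinal step is to \emph{produce} the missing antecedent: one must show that for $M_\bullet \in \mathcal{K}$ there are stationarily many $\mathcal{N} \in \wp^*_\kappa(H_\theta)$ that are $\mathcal{K}$-reflecting at $M_\bullet$, i.e.\ with $M_\bullet \restriction \mathcal{N} \in \mathcal{K}$ (hypothesis \ref{item_StatManyKreflect} of Theorem \ref{thm_AbstractImplyDec}). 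Nothing in your proposal supplies these models. The paper gets them by deriving from VP (via the Bagaria et al.\ characterization through $C(n)$-extendible cardinals) that for each formula $\phi_{\mathcal{K}}$ defining $\mathcal{K}$ there are unboundedly many $\kappa$ with stationarily many $\mathcal{N} \in \wp^*_\kappa(H_\lambda)$ whose transitive collapses are $\Sigma_n$-correct in $V$; since $M_\bullet \restriction \mathcal{N}$ is isomorphic to the collapse of $M_\bullet$ (Lemma \ref{lem_TrCollapseIsoComplex}), correctness of the collapse yields $M_\bullet \restriction \mathcal{N} \in \mathcal{K}$. Theorem \ref{thm_AbstractImplyDec} then uses a.e.\ closure under quotients and transfinite extensions to upgrade ``stationary'' to ``club'' and to verify clause \ref{item_ElemSub} of Theorem \ref{thm_CharacterizeDecon} by induction on $|\mathcal{N}|$.

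The same gap breaks your part \ref{item_MaxKaplansky}: the paper's definition of a Kaplansky class requires \emph{both} $N_\bullet \in \mathcal{K}$ and $M_\bullet / N_\bullet \in \mathcal{K}$, and you concede you cannot obtain $\langle \mathcal{N} \cap M \rangle \in \mathcal{K}$ without the reflecting-model machinery---but then the conclusion you reach is not the Kaplansky condition. (Note also that obtaining $M/\langle\mathcal{N}\cap M\rangle\in\mathcal{K}$ from a.e.\ quotient closure already presupposes $\langle\mathcal{N}\cap M\rangle\in\mathcal{K}$, via Observation \ref{obs_EntireQuotient}.) Your reduction of part \ref{item_XGP_Decon} to part \ref{item_MaxDecon} is correct in outline and matches the paper (Corollary \ref{cor_KGP_KappaAEquottfext} supplies closure under transfinite extensions and eventual a.e.\ closure under quotients for $\mathcal{K}(\mathfrak{X}\text{-}\mathcal{GP})$, and kernels of filtered complexes give the filtration of $G$), but it inherits the gap in part \ref{item_MaxDecon}.
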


In the particular case $\mathfrak{X}=\{ \text{projectives} \}$---i.e., in the particular case of $\mathcal{GP}$---part \ref{item_XGP_Decon} of Theorem \ref{thm_MainThm} gives an alternate proof of the following recent unpublished theorem of \v{S}aroch, which partially solved the well-known problem of whether $\mathcal{GP}$ is always precovering:

\begin{theorem}[\v{S}aroch]\label{thm_Saroch}
If there are sufficiently many large cardinals, then the class $\mathcal{GP}$ of Gorenstein Projective modules is a deconstructible class over any ring.  
\end{theorem}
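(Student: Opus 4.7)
The plan is to derive Šaroch's theorem as an immediate specialization of Theorem \ref{thm_MainThm}\ref{item_XGP_Decon}, as already flagged in the sentence preceding the statement. Recall from \ref{item_VP_X_GP} that when $\mathfrak{X}$ is taken to be the class of projective modules, the class $\mathfrak{X}\text{-}\mathcal{GP}$ coincides by definition with the classical class $\mathcal{GP}$ of Gorenstein Projective modules. Theorem \ref{thm_MainThm}\ref{item_XGP_Decon} asserts, under Vopěnka's Principle (which is what is meant here by ``sufficiently many large cardinals''), that $\mathfrak{X}\text{-}\mathcal{GP}$ is deconstructible for \emph{every} class $\mathfrak{X}$ over every ring. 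Substituting $\mathfrak{X} = \{\text{projectives}\}$ yields the deconstructibility of $\mathcal{GP}$, which is the desired conclusion.

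Because the statement is literally a one-instance corollary, the real work lies upstream in Theorem \ref{thm_MainThm}\ref{item_XGP_Decon}, not in the reduction itself. I would expect that part of Theorem \ref{thm_MainThm} to be proved by first verifying, in ZFC, that $\mathfrak{X}\text{-}\mathcal{GP}$ satisfies the two hypotheses of part \ref{item_MaxDecon}: closure under transfinite extensions (an Eklof-type argument applied to the defining complete $\mathfrak{X}$-resolutions) and eventual almost everywhere closure under quotients in the sense of Definition \ref{def_KappaAE} (which should be extractable from a cardinality analysis of the syzygies that can occur in such resolutions). Once those combinatorial closure properties are in hand, Theorem \ref{thm_MainThm}\ref{item_MaxDecon} delivers deconstructibility under VP, and specializing to $\mathfrak{X} = \{\text{projectives}\}$ gives the theorem.

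The main obstacle, therefore, is not in this corollary but in Theorem \ref{thm_MainThm}\ref{item_MaxDecon} itself: one must invoke VP to verify the top-down elementary-submodel condition \ref{item_ElemSub_PAIRS} of the characterization Theorem \ref{thm_CharacterizeDecon} for a class known only to be closed under transfinite extensions and eventually almost everywhere closed under quotients. This is where the large-cardinal strength is genuinely consumed, via the proper class of elementary embeddings supplied by VP, which is what allows one to conclude that both $\langle \mathcal{N} \cap M\rangle$ and $\langle \mathcal{N}' \cap M\rangle / \langle \mathcal{N} \cap M\rangle$ land back in $\mathcal{F}$ for all sufficiently closed elementary submodel pairs $\mathcal{N} \subseteq \mathcal{N}'$. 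Granting Theorem \ref{thm_MainThm}\ref{item_XGP_Decon}, no additional argument is needed for Theorem \ref{thm_Saroch}.
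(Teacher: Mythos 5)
Your reduction is logically sound and non-circular (Theorem \ref{thm_MainThm} is proved independently of Theorem \ref{thm_Saroch}), and your sketch of how part \ref{item_XGP_Decon} follows from part \ref{item_MaxDecon} via Corollary \ref{cor_KGP_KappaAEquottfext} matches what the paper does in Section \ref{sec_PfBigThms}. But it is not the route the paper takes for Theorem \ref{thm_Saroch} itself, and the difference is the whole point of the paper presenting a separate proof. The paper's proof (Section \ref{sec_SupercompSaroch}) uses only a proper class of \emph{supercompact} cardinals, which is strictly weaker in consistency strength than Vop\v{e}nka's Principle. The mechanism is: for the specific class $\mathcal{K}(\mathcal{GP}_R)$, the defining formula ``$x$ is an exact, $\text{Hom}_R(-,\text{Proj})$-exact complex of projectives'' is ZFC-provably absolute between $\mathcal{H}_\lambda$ and $V$ (Corollary \ref{cor_WitnessNonExact}, which rests on the Kaplansky-style Lemma \ref{lem_Kaplansky} to cut any witnessing projective module down to one of bounded size). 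Consequently the $0$-guessing models supplied by a supercompact (Definition \ref{def_0_guess}, Fact \ref{fact_Supercompact_0_guess}) already transitively reflect that formula, so Theorem \ref{thm_FormulaThm_implyDec} applies without invoking the full formula-reflection scheme that VP provides. Your approach buys brevity at the cost of a strictly stronger hypothesis; the paper's buys a sharper large-cardinal bound for $\mathcal{GP}$ (and for other $\mathfrak{X}$ whose defining formulas are provably absolute, as noted in Section \ref{sec_Remarks_LC}), at the cost of the absoluteness analysis in Corollary \ref{cor_WitnessNonExact}. Since the theorem's hypothesis is the deliberately vague ``sufficiently many large cardinals,'' both arguments establish the literal statement, but you should be aware that what you propose proves a weaker relative-consistency result than the one the paper actually establishes here.
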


We will give our alternate proof of Theorem \ref{thm_Saroch}, before proving Theorem \ref{thm_MainThm}, because it still has all the main ideas, but makes use of large cardinals far weaker than Vop\v{e}nka's Principle.  We briefly compare \v{S}aroch's proof of Theorem \ref{thm_Saroch} with our proof:\footnote{In early November 2020, the author sent \v{S}aroch an alleged proof, in ZFC alone, that $\mathcal{GP}$ is deconstructible over any ring.  \v{S}aroch immediately pointed out a fatal error in the argument, and at the same time sent the author a preprint of his (\v{S}aroch's) earlier proof, via $\kappa$-pure-injectivity, that $\mathcal{GP}$ is always deconstructible if there is a proper class of strongly compact cardinals.  The author then realized that his own argument works \emph{if} one assumes a proper class of supercompact cardinals; more precisely, if the elementary submodels used in the problematic part of his earlier argument are assumed to be \emph{0-guessing} elementary submodels (see Definition \ref{def_0_guess} and Fact \ref{fact_Supercompact_0_guess}), then the error pointed out by \v{S}aroch is fixed.  This is the proof of Theorem \ref{thm_Saroch} given here, which is then generalized to prove Theorem \ref{thm_MainThm}.}
\begin{enumerate}[label=(\roman*)]
 \item\label{item_SarochsProof} \v{S}aroch's proof of Theorem \ref{thm_Saroch} assumes a proper class of strongly compact cardinals, and proceeds via the notion of $\kappa$-pure-injectivity.  The key to his proof is that if $\kappa$ is a strongly compact cardinal larger than the ring, then every projective module is $\kappa$-pure-injective; and that if every projective module is $\kappa$-pure-injective, then $\mathcal{GP}$ is $<\kappa$-deconstructible.
 
 \item\label{item_MyProof} Our proof of Theorem \ref{thm_Saroch}---which appears in Section \ref{sec_WithHelpOfLC}---uses a stronger large cardinal assumption, namely, a proper class of supercompact cardinals.  We show that if $\kappa$ is a supercompact cardinal larger than the ring, then the set of elementary submodels of $(H_\theta,\in)$ that are ``$\mathcal{GP}$-reflecting" (Definition \ref{def_K_reflect}) is stationary in $\wp_\kappa(H_\theta)$ for all $\theta > \kappa$; and that the stationarity of this set, together with some nice quotient behavior of the class $\mathcal{GP}$ with respect to $\mathcal{GP}$-reflecting elementary submodels, implies clause \ref{item_ElemSub} of Theorem \ref{thm_CharacterizeDecon} (when $\mathcal{F} = \mathcal{GP}$).
\end{enumerate}

\noindent The ideas in \ref{item_MyProof} are basically the same ones we use later to prove Theorem \ref{thm_MainThm}, which gets (under VP) deconstructibility of $\mathfrak{X}$-$\mathcal{GP}$ for all possible classes $\mathfrak{X}$.  \v{S}aroch's techniques described in \ref{item_SarochsProof} seem to generalize to those $\mathfrak{X}$ that can be shown (with the help of large cardinals) to be $\lambda$-pure-injective for some $\lambda$; however, it's not clear for which $\mathfrak{X}$ this could be done, even assuming VP.  In particular, it is not clear whether the techniques in \ref{item_SarochsProof} could be used to get deconstructibility of $\mathfrak{X}$-$\mathcal{GP}$ when $\mathfrak{X}$ is, say, the class of flat modules (i.e.\ to get deconstructibility of the Ding Projectives).  We note that for certain $\mathfrak{X}$, deconstructibility of $\mathfrak{X}$-$\mathcal{GP}$ is known to be a theorem of ZFC, e.g., when $\mathfrak{X}$ is closed under double duals and direct sums (\cite[Proposition 4.7]{MR4076700}) or the classes considered in \cite{bravo2014stable}.

Section \ref{sec_Prelim_ElemSub} provides the relevant background about elementary submodels of fragments of the universe of sets.  Section \ref{sec_AlgRef} proves a variety of lemmas with the following theme:  if $\mathcal{N}$ is a (possibly small) elementary submodel of the universe of sets, and $M$ is a module (or complex) such that $M$ is an element (but not necessarily a subset!) of $\mathcal{N}$, what properties of $M$ are inherited by $M \cap \mathcal{N}$?  This includes a variant of Kaplansky's Theorem (Lemma \ref{lem_Kaplansky}); a characterization of $<\kappa$-Noetherian rings in terms of elementary submodels (Theorem \ref{thm_CharactNoeth}); and the importance of the $<\kappa$-Noetherian property when restricting exact complexes to elementary submodels (Lemma \ref{lem_ExactRestrctNoProjective}).  Section \ref{sec_ProofCharDecon} proves Theorem \ref{thm_CharacterizeDecon} and states a variant for complexes of modules.  Section \ref{sec_AEclosureQuot} introduces the concept of eventual almost everywhere closure under quotients, and shows why classes of the form $\mathfrak{X}$-$\mathcal{GP}$ always have this property.  Section \ref{sec_K_ref_elem_subm} introduces, for a class $\mathcal{K}$ of complexes of modules the concept of a \textbf{$\boldsymbol{\mathcal{K}}$-reflecting elementary submodel} (Definition \ref{def_K_reflect}), and how stationarily many such models, in conjunction with eventual almost everywhere closure under quotients and transfinite extensions, guarantee deconstructibility of $\mathcal{K}$ and of certain associated classes of modules (Theorem \ref{thm_AbstractImplyDec}).   Section \ref{sec_WithHelpOfLC} proves Theorems \ref{thm_Saroch} and \ref{thm_MainThm};  the key use of the large cardinal in each of those theorems is to arrange that there are enough $\mathcal{K}$-reflecting elmentary submodels (for the relevent class $\mathcal{K}$ of complexes).  Section \ref{sec_Questions} lists some open questions, and Appendix \ref{sec_APPENDIX_VP} shows why Vop\v{e}nka's Principle implies the ad-hoc large cardinal property that was used in Section \ref{sec_WithHelpOfLC}.

\section{Preliminaries on elementary submodels}\label{sec_Prelim_ElemSub}

We briefly introduce the tools of \emph{elementary submodel} arguments; good accounts of such arguments appear in Soukup~\cite{MR2800978}, Dow~\cite{MR1031969}, and Geschke~\cite{MR1950041}.  The point of such arguments is that ``almost all" submodels of your (uncountable) structure of interest can be viewed as traces of structures which have a vast amount of set-theoretic closure.  The technique is analogous to viewing a function $f: \mathbb{R} \to \mathbb{R}$ as a trace of a complex function or surface, to exploit the tools of complex analysis to prove facts about the original function $f$.

All set-theoretic and model-theoretic terminology will follow Jech~\cite{MR1940513}, unless otherwise indicated.  If $L$ is a first order language and $\mathcal{A}$, $\mathcal{B}$ are $L$-structures, $\boldsymbol{\mathcal{A} \prec \mathcal{B}}$ means that $\mathcal{A}$ is an elementary submodel of $\mathcal{B}$; i.e., for all $L$-formulas $\phi(v_1, \dots, v_k)$ and all $a_1, \dots, a_k \in A$,
\[
\mathcal{A} \models \phi(a_1, \dots, a_k) \ \text{ if and only if } \ \mathcal{B} \models \phi(a_1,\dots,a_k),
\]
where $\models$ is the satisfaction relation.  If $V$ denotes the universe of sets and $L$ is the language of set theory---i.e., the language with a single binary predicate symbol $\dot{\in}$---we would often like to work with small (e.g.\ countable) elementary submodels of $(V,\in)$, and the reader won't lose much by just pretending all of our ``elementary submodels" are elementary in $(V,\in)$. However, by G\"odel's Second Incompleteness Theorem, there is no guarantee that such elementary submodels exist (at least not without large cardinal assumptions).  For this reason, set theorists often work with sufficiently closed initial segments of the universe; these initial segments are usually good enough for the intended application, and since these initial segments are sets (rather than proper classes), the Downward L\"owenheim-Skolem Theorem from first order logic can be applied to first order structures on them.  

 Cardinal numbers denoted by Greek letters are always assumed to be infinite.  Given a set $x$, the \textbf{transitive closure of $\boldsymbol{x}$}, denoted $\boldsymbol{\textbf{trcl}(x)}$, is the set of all $z$ such that there exists a finite sequence $u_0, \dots, u_n$ such that $z = u_n \in u_{n-1} \in \dots \in u_0 \in x$.  For a cardinal $\theta$, $\boldsymbol{H_\theta}$ denotes the set of all $x$ such that $|\text{trcl}(x)|<\theta$.  Every set is a member of some such $H_\theta$.  $H_\theta$ is a transitive set and, if $\theta$ is regular, $\boldsymbol{\mathcal{H}_\theta}:=(H_\theta,\in)$ models all axioms of ZFC except possibly the Powerset Axiom.  A useful fact is that $\mathcal{H}_\theta \prec_{\Sigma_1} (V,\in)$; i.e., if $\phi$ is a $\Sigma_1$ or $\Pi_1$ formula in the language of set theory, then for every finite list $p_1, \dots, p_k$ of parameters from $H_\theta$,
\[
(H_\theta,\in) \models \phi(p_1,\dots,p_k) \ \text{if and only if } (V,\in) \models \phi(p_1,\dots, p_k).
\]
Some examples of properties expressible by $\Sigma_1$ or $\Pi_1$ formulas that we will make use of are ``$M$ is a free $R$-module" and ``$M$ is a projective $R$-module".  Typically one doesn't even need the $\Sigma_1$-elementarity of $\mathcal{H}_\theta$ in $(V,\in)$; by choosing $\theta$ large enough to include all relevant parameters, statements of interest become $\Sigma_0$ statements,\footnote{I.e.\ all quantifiers are bounded by some parameter.  For example, ``$M$ is free" is $\Sigma_0$ in the parameters $\wp(M)$ and $R$.} which are absolute between $(V,\in)$ and $(H,\in)$ where $H$ is \emph{any} transitive set.  For most purposes, one could also use the rank initial segments of the universe (i.e., the $V_\alpha$'s) for such arguments.

We list some basic facts about elementary submodels of $\mathcal{H}_\theta$ (see the works cited earlier for more details).

\begin{fact}\label{fact_BasicElemSub}
Suppose $\mathcal{N} \prec \mathcal{H}_\theta$ (for some regular uncountable $\theta$).  Then $\mathcal{N}$  is closed under finite sequences and finite subsets, always contains $\omega$ and $\mathbb{Z}$ as elements and subsets, and has transitive intersection with $\omega_1$ (i.e., $\mathcal{N} \cap \omega_1 \in \omega_1 \cup \{ \omega_1 \}$).  If $f \in \mathcal{N}$ and $f$ is a function, then $\mathcal{N}$ is closed under $f$.\footnote{I.e., $f(x) \in \mathcal{N}$ whenever $x \in \mathcal{N} \cap \text{dom}(f)$}
If $\kappa$ is a cardinal and $\mathcal{N}$ has transitive intersection with $\kappa$,\footnote{For $\kappa > \omega_1$ this is not necessarily automatic, even for $\mathcal{N}$ of size $<\kappa$, due to the possibility that \emph{Chang's Conjecture} holds.} then for every $x \in \mathcal{N}$ of cardinality $<\kappa$, $x$ is a subset of $\mathcal{N}$.
\end{fact}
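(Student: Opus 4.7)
The plan is to derive each clause from elementarity by exhibiting, for each object whose membership in $\mathcal{N}$ we want to establish, a first-order definition in $\mathcal{H}_\theta$ whose parameters already lie in $\mathcal{N}$; since $\mathcal{N} \prec \mathcal{H}_\theta$, any object uniquely defined from parameters in $\mathcal{N}$ must itself belong to $\mathcal{N}$.

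First I would dispatch the purely definability-based clauses. Finite tuples $\langle a_1,\dots,a_n\rangle$ and finite sets $\{a_1,\dots,a_n\}$ are uniquely defined from their entries, so lie in $\mathcal{N}$ whenever the $a_i$ do (with $n$ fixed externally, so there is no issue of quantifying over $n$). The sets $\omega$ and $\mathbb{Z}$ are each uniquely definable in $\mathcal{H}_\theta$ (the least limit ordinal, and the standard construction built from $\omega$), so they are elements of $\mathcal{N}$; and since each natural number, respectively each integer, is itself definable without parameters, they are moreover subsets of $\mathcal{N}$. Closure under a function $f \in \mathcal{N}$ is the same kind of observation: for $x \in \mathcal{N} \cap \mathrm{dom}(f)$, the value $f(x)$ is the unique $y$ such that $\langle x,y\rangle \in f$, a formula with parameters in $\mathcal{N}$.

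The two transitive-intersection clauses combine function-closure with the existence of appropriate surjections from small ordinals. For $\omega_1$: if $\alpha \in \mathcal{N} \cap \omega_1$, then $\mathcal{H}_\theta$ sees a surjection $f \colon \omega \to \alpha$, so by elementarity we may choose such an $f$ in $\mathcal{N}$; then function-closure together with $\omega \subseteq \mathcal{N}$ forces $\alpha = \mathrm{range}(f) \subseteq \mathcal{N}$, so every $\beta<\alpha$ lies in $\mathcal{N}\cap\omega_1$. For the general $\kappa$ clause, given $x \in \mathcal{N}$ with $|x| < \kappa$, elementarity produces in $\mathcal{N}$ some $\alpha < \kappa$ and a bijection $g \colon \alpha \to x$; the transitivity hypothesis on $\mathcal{N}\cap\kappa$ promotes $\alpha \in \mathcal{N}$ to $\alpha \subseteq \mathcal{N}$, and then closure under $g$ yields $x \subseteq \mathcal{N}$.

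The only clause that requires any genuine care is the last, where the transitivity hypothesis on $\mathcal{N} \cap \kappa$ really is essential: without it, the ordinal $\alpha$ produced by elementarity could lie in $\mathcal{N}$ without being a subset of $\mathcal{N}$, and then one cannot evaluate $g$ at enough points to conclude $x \subseteq \mathcal{N}$. The footnote on Chang's Conjecture in the statement records that this obstruction is genuine for $\kappa > \omega_1$, which is why the $\omega_1$-case is automatic but the general case is not. Everything else amounts to routine bookkeeping with elementarity once these two templates — ``unique definability forces membership'' and ``function-closure plus transitive intersection forces subset-hood'' — are in hand.
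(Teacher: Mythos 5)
Your proof is correct, and it is exactly the standard argument that the paper leaves to its cited references (Soukup, Dow, Geschke) rather than writing out: every clause reduces to either ``an object uniquely definable from parameters in $\mathcal{N}$ lies in $\mathcal{N}$'' or ``function-closure plus a transitive initial segment of ordinals promotes elementhood to subsethood.'' You have also correctly identified why the transitivity hypothesis on $\mathcal{N}\cap\kappa$ is the one non-automatic ingredient in the final clause.
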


\noindent The last sentence of the previous fact is the reason that the phrase ``and $\mathcal{N} \cap \kappa$ is transitive" appears so often throughout this paper. For $\kappa = \omega_1$ this requirement is superfluous, since \emph{every} elementary submodel of some $\mathcal{H}_\theta$ has transitive intersection with $\omega_1$ (by the first part of the fact).

A sequence $\langle Z_\xi \ : \ \xi < \eta \rangle$ indexed by an ordinal $\eta$ will be called a \textbf{smooth $\boldsymbol{\in}$-chain} if it is $\subseteq$-increasing, $Z_\xi$ is both an element and subset of $Z_{\xi+1}$ whenever $\xi+1 < \eta$, and for all limit ordinals $\xi$, $Z_\xi = \bigcup_{\zeta < \xi} Z_\zeta$.  The following basic fact is a consequence of the Downward L\"owenheim-Skolem Theorem for first order logic.  
\begin{fact}\label{fact_SmoothChain}
Suppose $\kappa \le \lambda$ are both uncountable cardinals, and $\kappa$ is regular.  Let $\theta$ be a regular cardinal such that $\kappa,\lambda \in H_\theta$.  Let
\[
\mathfrak{A}  = (H_\theta,\in,\dots)
\]
be any first-order expansion of $(H_\theta,\in)$ in a countable signature.  Then there exists a smooth $\in$-chain
\[
\vec{\mathcal{N}} = \left\langle \mathcal{N}_\xi \ : \ \xi < \text{cf}(\lambda) \right\rangle
\]
such that for all $\xi < \text{cf}(\lambda)$:
\begin{enumerate}
 \item $\mathcal{N}_\xi \prec \mathfrak{A}$;

 \item $\mathcal{N}_\xi$ has transitive intersection with $\kappa$;
 \item $|\mathcal{N}_\xi|<\lambda$; and
 \item $\lambda \subseteq \bigcup_{\xi < \text{cf}(\lambda)} \mathcal{N}_\xi$. 
\end{enumerate}
\end{fact}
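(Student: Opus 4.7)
The plan is to build the chain $\langle \mathcal{N}_\xi : \xi < \text{cf}(\lambda)\rangle$ by transfinite recursion, applying the Downward L\"owenheim--Skolem Theorem (in the countable signature of $\mathfrak{A}$) at successor and initial stages, and the Tarski--Vaught chain theorem at limit stages. First I would fix an increasing sequence $\langle \lambda_\xi : \xi < \text{cf}(\lambda)\rangle$ of cardinals cofinal in $\lambda$. The skeleton of the recursion is straightforward: at stage $\xi+1$, produce some $\mathcal{N}_{\xi+1}\prec\mathfrak{A}$ containing $\{\mathcal{N}_\xi\}\cup \mathcal{N}_\xi \cup \lambda_{\xi+1}$ as a subset, with $|\mathcal{N}_{\xi+1}|$ as small as possible; at limit $\xi$, set $\mathcal{N}_\xi = \bigcup_{\zeta<\xi}\mathcal{N}_\zeta$. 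Smoothness, elementarity, the cardinality bound $|\mathcal{N}_\xi|<\lambda$, and the covering clause $\lambda \subseteq \bigcup_\xi \mathcal{N}_\xi$ will then follow routinely from the regularity of $\text{cf}(\lambda)$.

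The main obstacle---and the only nontrivial point---is arranging transitivity of $\mathcal{N}_\xi \cap \kappa$ at every stage, since this is not automatic once $\kappa > \omega_1$. I would split into cases. When $\kappa < \lambda$ the construction is free to include $\kappa$ itself as a subset of every $\mathcal{N}_\xi$, because $|\mathcal{N}_\xi|$ is permitted to be any cardinal strictly below $\lambda$; this forces $\mathcal{N}_\xi \cap \kappa = \kappa$. The delicate case is $\kappa = \lambda$, where $\kappa$ is regular uncountable, $|\mathcal{N}_\xi|<\kappa$ is required, and the above shortcut is unavailable. There I would insert an $\omega$-step absorption subroutine at every successor stage: given a preliminary $\mathcal{M}\prec\mathfrak{A}$ with $|\mathcal{M}|<\kappa$, set $\mathcal{M}_0 = \mathcal{M}$ and recursively pick $\mathcal{M}_{n+1}\prec\mathfrak{A}$ of cardinality $|\mathcal{M}_n|+|\gamma_n|+\aleph_0$ containing $\mathcal{M}_n \cup \gamma_n$ as a subset, where $\gamma_n := \sup(\mathcal{M}_n \cap \kappa)$. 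Regularity of $\kappa$ together with $|\mathcal{M}_n|<\kappa$ yields $\gamma_n < \kappa$ at each step, and a second application of regularity (to the countable union) shows that $\mathcal{M}^* := \bigcup_n \mathcal{M}_n$ has cardinality $<\kappa$. By Tarski--Vaught, $\mathcal{M}^*\prec\mathfrak{A}$, and
\[
\mathcal{M}^*\cap\kappa \;=\; \bigcup_n (\mathcal{M}_n \cap \kappa) \;=\; \sup_n \gamma_n,
\]
which is an ordinal, as required.

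At limit stages $\xi<\text{cf}(\lambda)$ transitivity of $\mathcal{N}_\xi \cap \kappa$ is preserved automatically, being a union of ordinals bounded in $\kappa$. For the covering clause in the delicate case $\kappa=\lambda$, I would enforce $\lambda_\xi \subseteq \mathcal{N}_\xi$ at each successor stage by inserting $\lambda_\xi$ into the preliminary $\mathcal{M}$ before running the absorption subroutine, so that $\bigcup_\xi \mathcal{N}_\xi \supseteq \sup_\xi \lambda_\xi = \lambda$. Apart from the absorption trick, every step is a routine application of Downward L\"owenheim--Skolem and Tarski--Vaught.
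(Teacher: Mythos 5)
Your proposal is correct and follows essentially the same route as the paper's proof: the same case split ($\kappa<\lambda$, where one simply puts $\kappa$ inside every model so that $\mathcal{N}_\xi\cap\kappa=\kappa$, versus $\kappa=\lambda$, where regularity of $\kappa$ is used to close off and make $\mathcal{N}_\xi\cap\kappa$ an ordinal), with the same use of a cofinal sequence in $\lambda$ to secure the covering clause. Your $\omega$-step absorption subroutine is just the standard detail behind the paper's phrase ``use the Downward L\"owenheim--Skolem Theorem (and regularity of $\kappa$)''; the only nitpick is that $\mathcal{M}^*\cap\kappa$ may be $\sup_n\gamma_n+1$ rather than $\sup_n\gamma_n$ when the suprema stabilize and are attained, but either way it is an ordinal, which is all that is needed.
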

\begin{proof}
Fix an increasing sequence $s=\langle s_i \ : \ i < \text{cf}(\lambda) \rangle$ of ordinals that is cofinal in $\lambda$.  If $\kappa = \lambda$, use the Downward L\"owenheim-Skolem Theorem (and regularity of $\kappa$) to recursively build a $\kappa$-length, smooth $\in$-chain of $<\kappa$-sized elementary submodels of $\mathfrak{A}$ whose intersections with $\kappa$ are elements of $\kappa$, and such that the $i+1$-st structure's intersection with $\kappa$ is at least $s_i$.  For $\lambda > \kappa$ ($\lambda$ possibly singular), use the Downward L\"owenheim-Skolem Theorem, and regularity of $\text{cf}(\lambda)$, to recursively define a smooth $\text{cf}(\lambda)$-length $\in$-chain of $<\lambda$-sized elementary submodels of $\mathfrak{A}$, such that the $i+1$-st submodel contains $s_i$ as a subset; since $\lambda > \kappa$ we can without loss of generality assume $\kappa \le s_0$, so each submodel contains $\kappa$ as a subset.
\end{proof}

We will also need the following lemma at one point in the proof of Theorem \ref{thm_AbstractImplyDec}.  It could also be expressed in terms of a certain interpretation of \emph{Stationary Logic}, but the following characterization will be more directly helpful for our needs.\footnote{If $\kappa = \aleph_1$, \ref{item_ElemSub} is equivalent to asserting there is an $F:[M]^{<\omega} \to M$ such that whenever $N \subset M$ is closed under $F$ ($N$ of any cardinality), then both $\langle N \rangle$ and $M/\langle N \rangle$ are in $\mathcal{F}$. }  ZF$^-$ denotes the Zermelo-Fraenkel axioms of set theory, minus the powerset axiom.

\begin{lemma}\label{lem_NewAlgebraLemma}
Given any ring $R$ (not necessarily $<\kappa$-Noetherian), the following are equivalent:
 \begin{enumerate}
  \item Statement \ref{item_ElemSub} of Theorem \ref{thm_CharacterizeDecon};
  \item\label{item_ExistsTransH} For every $M \in \mathcal{F}$, there exists a pair $(H,\mathfrak{A})$ such that $(H,\in)$ is a transitive $\text{ZF}^-$ model, $\{\kappa, R,M \} \subset H$, and $\mathfrak{A}$ is an expansion of $(H,\in)$ in a countable first order signature such that whenever $M \in \mathcal{N} \prec \mathfrak{A}$ and $\mathcal{N} \cap \kappa$ is transitive, then $\langle \mathcal{N} \cap M \rangle \in \mathcal{F}$ and $M/\langle \mathcal{N} \cap M \rangle \in \mathcal{F}$.
\end{enumerate}
\end{lemma}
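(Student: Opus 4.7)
For $(1)\Rightarrow(2)$, given $M\in\mathcal{F}$, I would pick a regular uncountable $\theta$ with $\{R,\kappa,M\}\subset H_\theta$, and take $H:=H_\theta$ and $\mathfrak{A}:=(H_\theta,\in,R,\kappa,\mathcal{F}\cap H_\theta)$. This structure has a finite signature, $(H,\in)$ is a transitive $\mathrm{ZF}^-$ model containing the required parameters, and the conclusion demanded by condition~\ref{item_ExistsTransH} is precisely what \ref{item_ElemSub} delivers.

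The substantive direction is $(2)\Rightarrow(1)$. Assume \ref{item_ExistsTransH}, let $\theta$ be sufficiently large, let $\mathcal{N}\prec(H_\theta,\in,R,\kappa,\mathcal{F}\cap H_\theta)$ have transitive intersection with $\kappa$, and let $M\in\mathcal{N}\cap\mathcal{F}$. The plan is to reflect the existence of a witness for $M$ down into $\mathcal{N}$. First, I would arrange without loss of generality that the structures $\mathfrak{A}$ in \ref{item_ExistsTransH} carry built-in Skolem functions (still in a countable signature), and note that by \ref{item_ExistsTransH} a witness $(H_V,\mathfrak{A}_V)$ for $M$ exists in $V$ and, being a set, lies in $H_\theta$ once $\theta$ is large enough. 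The key observation is that the assertion ``there exists a pair $(H_0,\mathfrak{A}_0)$ witnessing \ref{item_ExistsTransH} for $M$'' is a first-order formula in the language of $(H_\theta,\in,R,\kappa,\mathcal{F}\cap H_\theta)$ with $M$ as a parameter: its universal quantifier over $\mathcal{N}'\subseteq H_0$ is bounded by $H_0$, and the conditions ``$\langle\mathcal{N}'\cap M\rangle\in\mathcal{F}$'' and ``$M/\langle\mathcal{N}'\cap M\rangle\in\mathcal{F}$'' become membership in the unary predicate $\mathcal{F}\cap H_\theta$, since both modules have size $\le|H_0|$ and therefore lie in $H_\theta$. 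This formula is true in $H_\theta$ thanks to $(H_V,\mathfrak{A}_V)$, so by elementarity of $\mathcal{N}$ there is a Skolemized witness $(H^*,\mathfrak{A}^*)\in\mathcal{N}$ for $M$.

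The finish is then bookkeeping. Set $\mathcal{N}':=\mathcal{N}\cap H^*$. Since $\omega\subseteq\mathcal{N}$ by Fact~\ref{fact_BasicElemSub}, and the countably many Skolem functions of $\mathfrak{A}^*$ are each uniformly recoverable from $\mathfrak{A}^*$ and an index in $\omega$, every Skolem function of $\mathfrak{A}^*$ lies in $\mathcal{N}$; hence $\mathcal{N}'$ is closed under them, so $\mathcal{N}'\prec\mathfrak{A}^*$. Transitivity of $H^*$ gives $M\subseteq H^*$ and $\kappa\subseteq H^*$, so $\mathcal{N}'\cap M=\mathcal{N}\cap M$ and $\mathcal{N}'\cap\kappa=\mathcal{N}\cap\kappa$ is still transitive. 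Applying the witness property of $(H^*,\mathfrak{A}^*)$ to $\mathcal{N}'$ then delivers $\langle\mathcal{N}\cap M\rangle\in\mathcal{F}$ and $M/\langle\mathcal{N}\cap M\rangle\in\mathcal{F}$, which is exactly \ref{item_ElemSub}.

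The principal obstacle I anticipate is the careful verification of the first-order rewriting: both that replacing $\mathcal{F}$ by the unary predicate $\mathcal{F}\cap H_\theta$ really is faithful for the submodules and quotients under consideration (which rests on their lying in $H_\theta$), and that the Skolemization convention genuinely lets ``$(H^*,\mathfrak{A}^*)\in\mathcal{N}$'' imply ``$\mathcal{N}\cap H^*\prec\mathfrak{A}^*$'' via closure under the now-canonical Skolem functions. Once those two points are nailed down, the argument is essentially forced.
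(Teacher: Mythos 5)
Your overall strategy is the same as the paper's: reflect the existence of a witnessing pair $(H,\mathfrak{A})$ into $\mathcal{N}$ by elementarity, set $\mathcal{N}':=\mathcal{N}\cap H^*$, check $\mathcal{N}'\prec\mathfrak{A}^*$ and that transitivity of $H^*$ gives $\mathcal{N}'\cap M=\mathcal{N}\cap M$ and $\mathcal{N}'\cap\kappa=\mathcal{N}\cap\kappa$. (Your Skolem-function device for $\mathcal{N}\cap H^*\prec\mathfrak{A}^*$ is a harmless variant of the paper's observation that a countable signature is automatically a subset of $\mathcal{N}$.) But there is a genuine gap at the reflection step: you need the witness $(H_V,\mathfrak{A}_V)$ for $M$ to be an \emph{element} of $H_\theta$ in order for the existential statement to be true in $(H_\theta,\in,R,\kappa,\mathcal{F}\cap H_\theta)$, and you dispose of this by saying the witness ``lies in $H_\theta$ once $\theta$ is large enough.'' The quantifiers do not permit this. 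Statement \ref{item_ElemSub} is universally quantified over $\theta$ (with $R,\kappa\in H_\theta$), and even for a single fixed $\theta$ the argument must work simultaneously for \emph{every} $M\in\mathcal{N}\cap\mathcal{F}$; since $M$ is chosen after $\theta$, and condition \ref{item_ExistsTransH} places no bound whatsoever on the size of the witness it hands you for $M$, there is no legitimate way to choose $\theta$ ``large enough'' in advance. Attempting to patch this by a closure argument (picking $\theta$ closed under $M\mapsto$ ``least level containing a witness for $M$'') only yields \ref{item_ElemSub} for a club of special $\theta$'s, not the statement as written.

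The missing ingredient is the paper's Claim \ref{clm_CanTakeHsmall}: before reflecting, one shows that the witness for $M$ can always be taken of cardinality $\lambda_M=\max(\kappa,|R|,|\mathrm{trcl}(M)|)$, by taking a $\lambda_M$-sized elementary submodel $X\prec\mathfrak{A}$ with $\kappa\cup\mathrm{trcl}(M)\subseteq X$, transitively collapsing it, and verifying that the collapsed structure still witnesses \ref{item_ExistsTransH} (the verification uses that the collapse fixes $M$ and $\kappa$, so that any bad $\mathcal{N}\prec\mathfrak{A}_X$ would transfer to a bad $\mathcal{N}'\prec\mathfrak{A}$ with the same trace on $M$ and on $\kappa$). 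With that bound in hand, a witness automatically lives in $H_\theta$ for every $\theta$ with $M,R,\kappa\in H_\theta$, the existential assertion becomes downward absolute to $(H_\theta,\in,R,\kappa,\mathcal{F}\cap H_\theta)$, and the rest of your argument goes through. Your closing remark that the first-order rewriting needs care is well taken, but the bounded-quantifier issues you flag there are not the real obstacle; the size of the witness is.
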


\begin{proof}
The $\Rightarrow$ direction is trivial, since the $H$ is witnessed by any $H_\theta$ such that $M \in H_\theta$, and the $\mathfrak{A}$ is witnessed by 
\[
(H_\theta,\in,R,\kappa,\mathcal{F} \cap H_\theta).
\]

Now suppose \eqref{item_ExistsTransH} holds.  First we claim:
\begin{nonGlobalClaim}\label{clm_CanTakeHsmall}
For any $M \in \mathcal{F}$, there is an $H$ as in \eqref{item_ExistsTransH} such that 
\[
|H|= \text{max}\Big( \kappa,|R|,|\text{trcl}(M)|  \Big) =: \lambda_M.
\]
\end{nonGlobalClaim}
\begin{proof}
(of Claim \ref{clm_CanTakeHsmall}): By the Downward L\"owenheim-Skolem Theorem, there is an $\mathfrak{X}=(X,\in,\dots) \prec \mathfrak{A}$ of cardinality at most $\lambda_M$ such that $\kappa \cup \text{trcl}(M) \subseteq X$.  The structure $(H,\in)$ is a transitive $\text{ZF}^-$ model, so in particular $X$ is extensional.  Then there is a transitive set $H_X$ and an isomorphism $\sigma: (H_X,\in) \to (X,\in)$.  Let $\mathfrak{A}_X$ be the result of transferring the structure $\mathfrak{X}$ to $H_X$.  We claim that $H_X$ and $\mathfrak{A}_X$ still witness the requirements of part \eqref{item_ExistsTransH}.  Consider any $\mathcal{N} \prec \mathfrak{A}_X$ such that $\mathcal{N} \cap \kappa$ is transitive.  Let $\mathcal{N}'$ be the pointwise image of $\mathcal{N}$ under $\sigma$; then $\mathcal{N}' \prec \mathfrak{X} \prec \mathfrak{A}$.  Since $\kappa \cup \text{trcl}(M) \subset X$, $\sigma$ fixes $M$ and $\kappa$, so 
\[
\mathcal{N}' \cap \kappa = \mathcal{N} \cap \kappa \text{ and } \mathcal{N}' \cap M = \mathcal{N} \cap M =:Z.
\]
Then by assumption about $\mathfrak{A}$, it follows that both $\langle Z \rangle$ and $M/\langle Z \rangle$ are in $\mathcal{F}$.  
\end{proof}

So we can without loss of generality assume that the $H$ in part \eqref{item_ExistsTransH} is always of size at most $\lambda_M$.  Now consider any regular $\theta$ such that $R,\kappa \in H_\theta$.  Let $\mathcal{F}_\theta:= \mathcal{F} \cap H_\theta$, and consider an arbitrary 
\[
\mathcal{Q} \prec (H_\theta,\in,R,\kappa,\mathcal{F}_\theta).
\]
such that $\mathcal{Q} \cap \kappa$ is transitive.  Fix any $M \in \mathcal{Q} \cap \mathcal{F} = \mathcal{Q} \cap \mathcal{F}_\theta$.  Since $\wp(x) \subset H_\theta$ for every $x \in H_\theta$, the following assertion (about the parameters $M$,$R$,$\kappa$) is downward absolute from $(V,\in)$ to the structure $(H_\theta,\in,R,\kappa,\mathcal{F}_\theta)$ (viewing $\mathcal{F}_\theta$ as a predicate):
\begin{quote}
``there is a transitive $\text{ZF}^-$ model $H$ of size $\text{max}\Big( \kappa,|R|,|\text{trcl}(M)| \Big)$ with $\{ R,M,\kappa \} \subset H$ and some $\mathfrak{A}$ extending $(H,\in)$ in a countable signature such that whenever $\mathcal{N} \prec \mathfrak{A}$ and $\mathcal{N} \cap \kappa$ is transitive, then both $\langle \mathcal{N} \cap M \rangle$ and $M/\langle \mathcal{N} \cap M \rangle$ are in $\mathcal{F}_\theta$" 
\end{quote}
Since $M \in \mathcal{Q}$, there is such an $H$ and $\mathfrak{A}$ that are \emph{elements} of $\mathcal{Q}$.  Since $\mathfrak{A} \in \mathcal{Q}$ and the signature of $\mathfrak{A}$ is countable---in particular contained as a subset of $\mathcal{Q}$---it follows that $\mathcal{Q}_0:=\mathcal{Q} \cap H \prec \mathfrak{A}$, and since $\kappa \subset H$, $\mathcal{Q}_0 \cap \kappa = \mathcal{Q} \cap \kappa$ is transitive.  So $\langle \mathcal{Q}_0 \cap M \rangle$ and $M/\langle \mathcal{Q}_0 \cap M \rangle$ are both in $\mathcal{F}_\theta$.  But since $M$ is also a subset of $H$, $\langle \mathcal{Q}_0 \cap M \rangle = \langle \mathcal{Q} \cap M \rangle$.  This completes the proof.

\end{proof}

\section{Elementary submodels and basic algebraic reflection}\label{sec_AlgRef}

By \emph{module} and \emph{ideal} we will officially mean \emph{left module} and \emph{left ideal}, respectively.  If $M$ is an $R$-module and $X \subseteq M$, $\boldsymbol{\langle X \rangle^M_R}$ denotes the $R$-submodule of $M$ generated by $X$, though we will just write $\langle X \rangle$ when the ring and ambient module are clear from the context.  This section is mainly about the following kind of question:

\begin{question}
Suppose $R$ is a ring, $M$ is an $R$-module, and $\mathcal{N}$ is an elementary submodel of some $\mathcal{H}_\theta$ such that $R$ and $M$ are both elements of---but not necessarily subsets of---the structure $\mathcal{N}$.  Note that by elementarity of $\mathcal{N}$, $\mathcal{N} \cap M$ is always closed under addition, and is in fact an $\mathcal{N} \cap R$-module.  What properties of $M$ also hold of $\langle \mathcal{N} \cap M \rangle^M_R$?   What about $M/\langle \mathcal{N} \cap M \rangle^M_R$?   If $M_\bullet \in \mathcal{N}$ is a complex of modules, what properties of $M_\bullet$ hold of its restriction to $\mathcal{N}$?
\end{question}

We will see:
\begin{itemize}
 \item  Section \ref{sec_Kaplansky} shows that \emph{projectivity} behaves very nicely with respect to elementary submodels.  This gives an alternative way to prove the classic theorem of Kaplansky, but will also be used in the proof of Theorems \ref{thm_Saroch} and \ref{thm_MainThm} (specifically, in Lemma \ref{lem_GorRefLemma}).
 \item Section \ref{sec_Noetherian} shows that if $\mathcal{N}$ has transitive intersection with the Noetherian degree of the ring, and $M_\bullet$ is an exact complex of modules with $M_\bullet \in \mathcal{N}$, then the restriction of $M_\bullet$ to $\mathcal{N}$ remains exact.  
\end{itemize}

\subsection{Kaplansky's Theorem revisited}\label{sec_Kaplansky}

Although submodules of projective modules need not be projective, Lemma \ref{lem_Kaplansky} basically says that---even for non-hereditary rings---``almost all" submodules of projective modules are projective, and moreover have projective quotients.  Lemma \ref{lem_Kaplansky} can be used in conjunction with (a variant of) Theorem \ref{thm_CharacterizeDecon} to give an alternative proof of Kaplansky's Theorem (and recent variants, such as \cite{MR4115358}).

First, recall (\cite{MR1653294}) that an $R$-module $P$ is projective if and only if it has a \textbf{dual basis}, which is a pair
\[
\mathcal{D}=\Big(  B, \big( f_b \big)_{b \in B} \Big)
\]
such that $B \subseteq P$,\footnote{Dual bases are typically defined using an index set $I$ and a function $i \mapsto (b_i,f_i)$ for each $i \in I$, which is not required to be injective.  However, the definition given here also characterizes projectivity; in fact, if $P$ is projective, one can find a dual basis with $i \mapsto b_i$ injective, with range exactly $P$. } each $f_b: P \to R$ is an $R$-linear map, and for every $x \in P$,
\[
\text{sprt}_{\mathcal{D}}(x):=  \{ b \in B \ : \ f_b(x) \ne 0 \} 
\]
is finite, and 
\[
x = \sum_{b \in \text{sprt}_{\mathcal{D}}(x)} f_b(x) b.
\]
If $\mathcal{D}$ is a dual basis for $P$ and $M$ is an $R$-submodule of $P$, let us say that \textbf{$\boldsymbol{M}$ is closed under $\boldsymbol{\mathcal{D}}$-supports} if $\text{sprt}_{\mathcal{D}}(x) \subseteq M$ for every $x \in M$.

\begin{lemma}\label{lem_SupportClosureProjQuot}
If $M$ is closed under $\mathcal{D}$-supports, then both $M$ and $P/M$ are projective.
\end{lemma}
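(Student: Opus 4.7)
The plan is to realize $P$ as a retract of the free module $R^{(B)}$ via the dual basis $\mathcal{D}$, and then use the hypothesis on $M$ to produce analogous retract diagrams exhibiting $M$ and $P/M$ as retracts of the free summands $R^{(B \cap M)}$ and $R^{(B \setminus M)}$ respectively. Since projectivity is equivalent to being a retract of a free module, this will finish the proof.

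Concretely, first I would define $\phi \colon P \to R^{(B)}$ by $\phi(x) = \sum_{b \in B} f_b(x)\, e_b$ (well-defined by finiteness of $\text{sprt}_{\mathcal{D}}(x)$) and $\psi \colon R^{(B)} \to P$ by $\psi(e_b) = b$; the dual-basis identity $x = \sum_{b \in \text{sprt}_{\mathcal{D}}(x)} f_b(x) b$ gives $\psi \circ \phi = \text{id}_P$. Now the closure hypothesis is precisely that $\text{sprt}_{\mathcal{D}}(x) \subseteq B \cap M$ for every $x \in M$, so $\phi$ restricts to $\phi_M \colon M \to R^{(B \cap M)}$; and since $\psi$ sends $e_b$ to $b \in M$ for $b \in B \cap M$, $\psi$ restricts to $\psi_M \colon R^{(B \cap M)} \to M$. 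The identity $\psi_M \circ \phi_M = \text{id}_M$ still holds, so $M$ is a retract of a free module, and therefore projective.

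For the quotient, let $\pi_{\bar M} \colon R^{(B)} \twoheadrightarrow R^{(B \setminus M)}$ be the complementary coordinate projection. Since $\phi(M) \subseteq R^{(B \cap M)}$, the composite $\pi_{\bar M} \circ \phi$ vanishes on $M$ and descends to a map $\bar\phi \colon P/M \to R^{(B \setminus M)}$. Going the other way, $\bar\psi \colon R^{(B \setminus M)} \to P/M$, $e_b \mapsto b + M$, is well-defined, and splitting $x = \sum_{b \in \text{sprt}_{\mathcal{D}}(x) \cap M} f_b(x) b + \sum_{b \in \text{sprt}_{\mathcal{D}}(x) \setminus M} f_b(x) b$ (with the first summand in $M$, since every such $b$ is itself in $M$) yields $\bar\psi \circ \bar\phi = \text{id}_{P/M}$, so $P/M$ is a retract of a free module as well.

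The only subtle point, which doubles as the main bookkeeping step, is the observation that for $m \in M$ and $b \notin M$ one has $f_b(m) = 0$: otherwise $b$ would lie in $\text{sprt}_{\mathcal{D}}(m) \subseteq M$, a contradiction. This is both what forces $\phi(M) \subseteq R^{(B \cap M)}$ and what makes $\bar\phi$ factor through $P/M$; once it is noticed, the two retract diagrams essentially write themselves, and I do not anticipate any further obstacle.
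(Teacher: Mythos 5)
Your proof is correct and is essentially the paper's argument in different clothing: the paper restricts the dual basis $\mathcal{D}$ to $B\cap M$ to handle $M$ and induces well-defined functionals $\hat f_b$ ($b\in B\setminus M$) on $P/M$, which is exactly your splitting of $R^{(B)}$ into $R^{(B\cap M)}\oplus R^{(B\setminus M)}$ read through the equivalence between dual bases and retractions onto free modules. The key observation you isolate --- $f_b(m)=0$ for $m\in M$ and $b\in B\setminus M$, by closure under $\mathcal{D}$-supports --- is precisely the step the paper uses to show $\hat f_b$ is well-defined.
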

\begin{proof}
The obvious restriction of $\mathcal{D}$ to $M$ is a dual basis for $M$.  To get a dual basis for $P/M$, first note that if $b \in B \setminus M$ and $x + M = y + M$, then $f_b(x) = f_b(y)$; otherwise, $f_b(x-y)$ would be nonzero, and since $z:=x-y \in M$ and $M$ is closed under $\mathcal{D}$-supports, $b$ would be an element of $M$, a contradiction.  So for $b \in B \setminus M$, the function $\hat{f}_b: P/M \to R$ defined by $x + M \mapsto f_b(x)$ is well-defined.  This yields a dual basis for $P/M$ (indexed by $\{ b + M \ : \ b \in B \setminus M \}$).  
\end{proof}

\begin{lemma}\label{lem_Kaplansky} If $P$ is a projective $R$-module with $R,P \in \mathcal{N} \prec \mathcal{N}' \prec \mathcal{H}_\theta$, then 
 \[
\langle \mathcal{N} \cap P \rangle, \ \frac{\langle \mathcal{N}' \cap P \rangle}{\langle \mathcal{N} \cap P \rangle}, \text{ and } \frac{P}{\langle \mathcal{N} \cap P \rangle}
\]
are each projective. 
\end{lemma}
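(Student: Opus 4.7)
The plan is to reduce everything to Lemma \ref{lem_SupportClosureProjQuot} by choosing a dual basis for $P$ that already lives inside $\mathcal{N}$. Since $P$ is projective, the statement ``there exists a dual basis for $P$'' is a $\Sigma_1$ assertion with parameters $P,R \in \mathcal{N}$, and any dual basis has cardinality at most $|P|$ and hence (for appropriate $\theta$) lives in $H_\theta$; so by $\Sigma_1$-absoluteness of $\mathcal{H}_\theta$ and elementarity of $\mathcal{N}$, I can pick a dual basis $\mathcal{D}=(B,(f_b)_{b\in B})$ for $P$ with $\mathcal{D}\in\mathcal{N}$.

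Next I would show that $\langle \mathcal{N}\cap P\rangle$ is closed under $\mathcal{D}$-supports. The function $x \mapsto \text{sprt}_{\mathcal{D}}(x)$ is definable from $\mathcal{D}$, so it lies in $\mathcal{N}$, and $\mathcal{N}$ is closed under it by Fact \ref{fact_BasicElemSub}; finite sets in $\mathcal{N}$ are subsets of $\mathcal{N}$, so for $x \in \mathcal{N}\cap P$ one has $\text{sprt}_{\mathcal{D}}(x)\subseteq \mathcal{N}\cap B\subseteq \mathcal{N}\cap P$. For an arbitrary $x=\sum_{i}r_i x_i \in \langle\mathcal{N}\cap P\rangle$ with $r_i\in R$ and $x_i\in \mathcal{N}\cap P$, the $R$-linearity of each $f_b$ gives $\text{sprt}_{\mathcal{D}}(x)\subseteq \bigcup_i \text{sprt}_{\mathcal{D}}(x_i)\subseteq \mathcal{N}\cap P\subseteq \langle\mathcal{N}\cap P\rangle$. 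Lemma \ref{lem_SupportClosureProjQuot} then yields projectivity of both $\langle \mathcal{N}\cap P\rangle$ and $P/\langle \mathcal{N}\cap P\rangle$.

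For the middle term $\langle \mathcal{N}'\cap P\rangle/\langle \mathcal{N}\cap P\rangle$, the same argument applied with $\mathcal{N}'$ in place of $\mathcal{N}$ (using $\mathcal{D}\in \mathcal{N}\subseteq \mathcal{N}'$) shows that $P':=\langle \mathcal{N}'\cap P\rangle$ is closed under $\mathcal{D}$-supports; as in the proof of Lemma \ref{lem_SupportClosureProjQuot}, the restriction $\mathcal{D}'$ of $\mathcal{D}$ to $P'$ is then a dual basis for $P'$. Since $\mathcal{D}$-supports and $\mathcal{D}'$-supports coincide for elements of $P'$, the submodule $\langle \mathcal{N}\cap P\rangle\subseteq P'$ is closed under $\mathcal{D}'$-supports by the previous paragraph, and a second application of Lemma \ref{lem_SupportClosureProjQuot} (to the pair $P',\mathcal{D}'$) gives projectivity of $P'/\langle \mathcal{N}\cap P\rangle$, completing the proof.

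No step is really hard; the main thing to keep straight is the distinction between $\mathcal{N}\cap P$, which is only an $\mathcal{N}\cap R$-module, and the full $R$-submodule $\langle \mathcal{N}\cap P\rangle$ it generates, and to verify that $R$-linearity of the $f_b$'s propagates support containment through $R$-linear combinations. The only mildly delicate point is checking that the restriction $\mathcal{D}'$ of $\mathcal{D}$ genuinely gives a dual basis for $P'$, which is exactly where the previously-established support-closure of $P'$ is used.
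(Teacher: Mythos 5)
Your proof is correct and follows essentially the same route as the paper's: fix a dual basis $\mathcal{D}\in\mathcal{N}$ via $\Sigma_1$-absoluteness and elementarity, show $\langle\mathcal{N}\cap P\rangle$ (and likewise $\langle\mathcal{N}'\cap P\rangle$) is closed under $\mathcal{D}$-supports using linearity of the $f_b$'s and the fact that finite elements of $\mathcal{N}$ are subsets of $\mathcal{N}$, then invoke Lemma \ref{lem_SupportClosureProjQuot}. Your explicit treatment of the middle quotient via the restricted dual basis $\mathcal{D}'$ on $P'=\langle\mathcal{N}'\cap P\rangle$ is exactly the step the paper leaves implicit.
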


\begin{proof}
We could prove this using the direct sum decomposition of $P$ given by Kaplansky's Theorem,\footnote{That every projective module is a direct sum of countably generated projective modules.} but we prefer to give a direct proof, in order to show the power of elementary submodels in this context.  Since $P$ has a dual basis in $V$, the $\Sigma_1$-elementarity of $\mathcal{H}_\theta$ in $(V,\in)$ yields a $\mathcal{D} \in H_\theta$ such that $\mathcal{H}_\theta \models$ ``$\mathcal{D}$ is a dual basis for $P$".  Then by elementarity of $\mathcal{N}$ in $\mathcal{H}_\theta$, we can without loss of generality assume $\mathcal{D} \in \mathcal{N}$.  We claim that $\langle \mathcal{N} \cap P \rangle$ is closed under $\mathcal{D}$-supports; once we have this, the same is obviously true of $\mathcal{N}'$ too, and then (by Lemma \ref{lem_SupportClosureProjQuot}) one gets the projectivity of each of the three modules mentioned in the statement of the current lemma.

Suppose $x \in \langle \mathcal{N} \cap P \rangle$; then $x = \sum_{k=1}^n r_k z_k$ for some $n \in \mathbb{N}$, some $r_k \in R$ and $z_k \in \mathcal{N} \cap P$.  Then for any $b \in B$, $f_b(x) = \sum_{k=1}^n r_k f_b(z_k)$, which implies
 \begin{equation}\label{eq_SprtContained}
\text{sprt}_{\mathcal{D}}(x) \subseteq \bigcup_{k=1}^n \text{sprt}_{\mathcal{D}} (z_k).
\end{equation}

\noindent Since each $z_k \in \mathcal{N}$ and $\mathcal{D} \in \mathcal{N}$, $\text{sprt}_{\mathcal{D}}(z_k)$ is an element of $\mathcal{N}$ for each $k=1,\dots,n$; and since it is finite, it is a subset of $\mathcal{N}$ by Fact \ref{fact_BasicElemSub}.  Also by Fact \ref{fact_BasicElemSub}, $\mathcal{N}$ is closed under finite unions, so the entire right side of \eqref{eq_SprtContained} is contained--both as an element and as a subset--in $\mathcal{N}$.
\end{proof}

\begin{corollary}\label{cor_WitnessNonExact}
Suppose $A$, $B$, and $C$ are $R$-modules, and $B$ is $\mu$-generated, where $\mu$ is an infinite cardinal. If $\xymatrix{A \ar[r]^f & B \ar[r]^g & C}$ is exact at $B$ but there is some projective $Q$ such that 
\[
\xymatrix{\text{Hom}_R(C,Q) \ar[r]^{g^*} & \text{Hom}_R(B,Q) \ar[r]^{f^*} & \text{Hom}_R(A,Q)}\]
is not exact at $\text{Hom}_R(B,Q)$, then there is such a $Q$ that is $\mu$-generated. 

It follows that if $M_\bullet = \Big( \xymatrix{M_n \ar[r]^{f_n} & M_{n+1}}  \Big)_{n \in \mathbb{Z}}$ is a complex of $R$-modules, $M_\bullet$ and $R$ are elements of $H_\lambda$ where $\lambda$ is an uncountable cardinal, then the statement
\begin{quote}
``$M_\bullet$ is $\text{Hom}_R(-,\text{Proj})$-exact"
\end{quote} 
is absolute between $\mathcal{H}_\lambda$ and the universe $V$ of sets.
\end{corollary}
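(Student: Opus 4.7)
The plan is an elementary submodel argument exploiting Lemma \ref{lem_Kaplansky}. For the first assertion, fix a witness $h \in \text{Hom}_R(B,Q)$ with $h \circ f = 0$ that does not factor through $g$, and fix a generating set $B_0 \subseteq B$ with $|B_0| \le \mu$. Choose a regular $\theta$ with all relevant data in $H_\theta$, and apply the Downward Löwenheim–Skolem Theorem (with the elements of $B_0$ added as constants) to produce $\mathcal{N} \prec (H_\theta,\in,R,A,B,C,f,g,Q,h)$ of cardinality $\mu$ with $B_0 \subseteq \mathcal{N}$. Set $Q^* := \langle \mathcal{N} \cap Q \rangle$. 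By Lemma \ref{lem_Kaplansky}, $Q^*$ is projective, and by construction it is $\mu$-generated, namely by the set $\mathcal{N} \cap Q$.

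To see that $Q^*$ still witnesses the non-exactness, note that for $b \in B_0 \subseteq \mathcal{N}$, elementarity combined with $h \in \mathcal{N}$ gives $h(b) \in \mathcal{N} \cap Q \subseteq Q^*$; hence $h(B_0) \subseteq Q^*$, and then $R$-linearity of $h$ together with $B = \langle B_0 \rangle$ forces $h(B) \subseteq Q^*$. Let $h^*: B \to Q^*$ be the corestriction of $h$, and let $\iota: Q^* \hookrightarrow Q$ be the inclusion. Injectivity of $\iota$ and $\iota \circ h^* \circ f = h \circ f = 0$ yield $h^* \circ f = 0$. If some $\bar{h}: C \to Q^*$ satisfied $\bar{h} \circ g = h^*$, then $\iota \circ \bar{h}: C \to Q$ would factor $h$ through $g$, contradicting the choice of $h$. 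Hence $Q^*$ is the desired $\mu$-generated projective witness.

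The absoluteness assertion follows immediately. The direction $V \models \text{Hom-exact} \Rightarrow \mathcal{H}_\lambda \models \text{Hom-exact}$ is trivial since $H_\lambda \subseteq V$. Conversely, if some projective $Q \in V$ and index $n$ witness failure of Hom-exactness of $M_\bullet$, then $M_n \in H_\lambda$ is $\mu_n$-generated for some $\mu_n < \lambda$, and the first part supplies a $\mu_n$-generated projective witness $Q'$. Presenting $Q'$ as a quotient $R^{(\mu_n)}/K$ puts an isomorphic copy of $Q'$ in $H_\lambda$ (since $|R|, \mu_n < \lambda$), and all pertinent homomorphisms also have cardinality $< \lambda$ and so live in $H_\lambda$; hence the failure of exactness is already visible in $\mathcal{H}_\lambda$. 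The only point requiring any care is arranging $B_0 \subseteq \mathcal{N}$ while keeping $|\mathcal{N}| = \mu$ — which is exactly what forces the bound $\mu$ on the witness — and this is handled by including $B_0$ as parameters in the Löwenheim–Skolem step.
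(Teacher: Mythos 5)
Your argument is correct and follows essentially the same route as the paper's proof: for the first assertion, a Downward L\"owenheim--Skolem argument producing a $\mu$-sized $\mathcal{N}$ containing (the image of) a generating set of $B$, with Lemma \ref{lem_Kaplansky} supplying projectivity of $\langle \mathcal{N}\cap Q\rangle$ and the corestriction of the offending map serving as the new witness; for the second, a cardinality bound placing the $\mu$-generated witness (up to isomorphism) inside $H_\lambda$ together with the $\Sigma_1$-absoluteness of projectivity. The only cosmetic difference is that you verify explicitly that the corestricted map still fails to factor through $g$, a step the paper compresses into the remark that it suffices to find a $\mu$-generated projective submodule of $Q$ containing the image of $\sigma$.
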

\begin{proof}
Let $Q$ be projective and $\sigma \in \text{ker}\big(f^* \big)$ witness non-exactness of the Hom sequence at the middle term.  Clearly, to prove the corollary, it suffices to find a $\mu$-generated, projective submodule of $Q$ that contains the image of $\sigma$.  Let $Z$ be a $\mu$-sized generating set for $B$; then $Z':=\sigma[Z]$ is a $\le \mu$-sized generating set for the image of $\sigma$; let $Y$ denote this image.  Fix any $\theta$ such that $Z', R, Q,Y \in H_\theta$; by the Downward L\"owenheim-Skolem Theorem, there is an $\mathcal{N}$ such that
\[
Z' \cup \{ Z',R,Q,Y \}  \subset \mathcal{N} \prec \mathcal{H}_\theta \text{ and } |\mathcal{N}| = \mu.
\]
Then $\langle \mathcal{N} \cap Q \rangle^Q$ is $\mu$-generated, is projective by Lemma \ref{lem_Kaplansky}, and contains $Y$  because $\mathcal{N} \supset Z'$.

Now we prove the second part of the corollary.  For the downward absoluteness, suppose $\mathcal{H}_\lambda \models$ ``$M_\bullet$ is \textbf{not} $\text{Hom}_R(-,\text{Proj})$-exact"; this is witnessed by an offending $\sigma: M_n \to Q$ in $H_\lambda$ where $\sigma,Q \in H_\lambda$ and $Q$ is projective from the point of view of $\mathcal{H}_\lambda$; this is easily upward absolute to $V$ (recall projectivity is $\Sigma_1$).  

For the other direction, suppose $V \models$ ``$M_\bullet$ is \textbf{not} $\text{Hom}_R(-,\text{Proj})$-exact".  By the first part of the corollary, this is witnessed by some offending $\sigma: M_n \to Q$ where $Q$ is projective and $|M_n|$-generated.  Then $|Q| \le |R|\cdot|M_n|$, and since both $M_n$ and $R$ are elements of $H_\lambda$, $|R|\cdot|M_n| < \lambda$.  So both $\sigma$ and $Q$ are (without loss of generality) elements of $H_\lambda$.  Since $\mathcal{H}_\lambda \prec_{\Sigma_1} V$, $Q$ is projective from the point of view of $\mathcal{H}_\lambda$ too (and clearly $\sigma$ is not in the range of $f_{n+1}^*$ from the point of view of $\mathcal{H}_\lambda$ either).  
\end{proof}

\subsection{$<\kappa$-Noetherian rings}\label{sec_Noetherian}

If $M_\bullet$ is a sequence of homomorphisms and $M_\bullet \in \mathcal{N}$, one can consider ``restricting" $M_\bullet$ to the submodules generated by $\mathcal{N}$ itself.  The next lemma provides very basic facts about this procedure, and the subsequent remark motivates why $<\kappa$-Noetherian rings come into play.

\begin{lemma}\label{lem_restrictComplexN}
Suppose
\[
\xymatrix{
M_\bullet: & \dots \ar[r] &  M_{n-1} \ar[r]^-{f_{n-1}} & M_{n} \ar[r]^-{f_{n}} & M_{n+1} \ar[r] & \dots \ (n \in \mathbb{Z})
}
\]
is a sequence of $R$-module homomorphisms indexed by $\mathbb{Z}$.  Suppose that $R$ and $M_\bullet$ are elements of $\mathcal{N}$, and $\mathcal{N} \prec \mathcal{H}_\theta$.  Then for each $n \in \mathbb{Z}$, $f_{n-1}$ is an element of $\mathcal{N}$, and $f_{n-1} \restriction \langle \mathcal{N} \cap M_{n-1} \rangle$ maps into $\langle \mathcal{N} \cap M_{n} \rangle$.  Furthermore,
\begin{equation}\label{eq_ImAt_n_NoNoeth}
 \text{im}\Big( f_{n-1} \restriction \langle \mathcal{N} \cap M_{n-1} \rangle \Big) \subseteq  \text{im}(f_{n-1}) \cap \langle \mathcal{N} \cap M_n \rangle
\end{equation}
and
\begin{equation}\label{eq_KerAt_n_NoNoeth}
\text{ker}\Big( f_n \restriction \langle \mathcal{N} \cap M_n \rangle \Big) = \text{ker}(f_n) \cap \langle \mathcal{N} \cap M_n \rangle.
\end{equation}

\noindent \textbf{Notation:} We will denote the sequence 
\[
\xymatrix{
\dots  \langle \mathcal{N} \cap  M_n \rangle \ar[rr]^-{f_{n} \restriction \langle \mathcal{N} \cap  M_n \rangle} && \langle \mathcal{N} \cap M_{n+1} \rangle \ar[rr]^-{f_{n+1} \restriction \langle \mathcal{N} \cap  M_{n+1} \rangle} && \langle \mathcal{N} \cap  M_{n+2} \rangle  \dots 
}
\]
by $\boldsymbol{M_\bullet \restriction \mathcal{N}}$, and we will denote the quotient $M_\bullet / \big( M_\bullet \restriction \mathcal{N}\big)$ by $\boldsymbol{M_\bullet / \mathcal{N}}$.  The quotient map $M_\bullet \to M_\bullet / \mathcal{N}$ will be denoted by $\pi_\bullet$.
\end{lemma}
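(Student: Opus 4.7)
My plan is to prove the lemma in four very short steps, each leaning entirely on the basic closure properties of $\mathcal{N}$ collected in Fact \ref{fact_BasicElemSub}; there is no real obstacle here, since no Noetherianity or exactness is assumed, so everything reduces to tracking which elements end up inside $\mathcal{N}$.

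First I would verify that $f_n \in \mathcal{N}$ for every $n \in \mathbb{Z}$. Since $M_\bullet \in \mathcal{N}$ is (coded as) a function with domain $\mathbb{Z}$, and $\mathbb{Z} \subseteq \mathcal{N}$ by Fact \ref{fact_BasicElemSub}, the closure of $\mathcal{N}$ under functions belonging to $\mathcal{N}$ yields $M_\bullet(n) \in \mathcal{N}$; unpacking this gives $f_n \in \mathcal{N}$ (and similarly $M_n \in \mathcal{N}$).

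Second, I would check that $f_{n-1}$ sends $\langle \mathcal{N} \cap M_{n-1} \rangle$ into $\langle \mathcal{N} \cap M_n \rangle$. An arbitrary element of the domain has the form $x = \sum_{k=1}^m r_k z_k$ with $r_k \in R$ and $z_k \in \mathcal{N} \cap M_{n-1}$. Since $f_{n-1} \in \mathcal{N}$ and $z_k \in \mathcal{N}$, the closure of $\mathcal{N}$ under the function $f_{n-1}$ gives $f_{n-1}(z_k) \in \mathcal{N}$, and of course $f_{n-1}(z_k) \in M_n$, so $f_{n-1}(z_k) \in \mathcal{N} \cap M_n$. Applying $R$-linearity of $f_{n-1}$, $f_{n-1}(x) = \sum_k r_k f_{n-1}(z_k) \in \langle \mathcal{N} \cap M_n \rangle$, as desired.

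Third, the image inclusion \eqref{eq_ImAt_n_NoNoeth} is then immediate: the left side is contained in $\text{im}(f_{n-1})$ by definition, and in $\langle \mathcal{N} \cap M_n \rangle$ by step two. Fourth, for the kernel equality \eqref{eq_KerAt_n_NoNoeth}, both inclusions are purely formal: an element of $\langle \mathcal{N} \cap M_n \rangle$ is killed by the restriction of $f_n$ iff it is killed by $f_n$ itself, which is exactly the equality claimed. Since every step is either a direct appeal to Fact \ref{fact_BasicElemSub} or an unpacking of definitions, I do not anticipate any substantive obstacle; the notation $M_\bullet \restriction \mathcal{N}$, $M_\bullet/\mathcal{N}$, and $\pi_\bullet$ is then well-defined by the mapping-into assertion.
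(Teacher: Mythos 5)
Your proposal is correct and follows essentially the same route as the paper's proof: obtain $f_n, M_n \in \mathcal{N}$ from $M_\bullet \in \mathcal{N}$ and $\mathbb{Z} \subseteq \mathcal{N}$, show $f_{n-1}$ maps $\mathcal{N}\cap M_{n-1}$ into $\mathcal{N}\cap M_n$ and extend by $R$-linearity to the generated submodule, and observe that the kernel equality is formal. No gaps.
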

\begin{proof}
Since $M_\bullet$ is a sequence with domain $\mathbb{Z}$, $\mathbb{Z}$ is an element and subset of $\mathcal{N}$ by Fact \ref{fact_BasicElemSub}, and $M_\bullet \in \mathcal{N}$, it follows by elementarity of $\mathcal{N}$ in $\mathcal{H}_\theta$ that each $f_n$ and each $M_n$ is an element of $\mathcal{N}$.  Elementarity of $\mathcal{N}$ then ensures that $f_{n-1} \restriction (\mathcal{N} \cap M_{n-1})$ maps into $\langle \mathcal{N} \cap M_{n} \rangle$.  Then the $R$-linearity of $f_{n-1}$ ensures that it also maps $\langle \mathcal{N} \cap M_{n-1} \rangle$ into $\langle \mathcal{N} \cap M_{n} \rangle$, yielding the inclusion \eqref{eq_ImAt_n_NoNoeth}.  The equality \eqref{eq_KerAt_n_NoNoeth} is obvious.  
\end{proof}

\begin{remark}\label{rem_RestrictExact}
Suppose $M_\bullet$ is a complex of $R$-modules, and $R$ and $M_\bullet$ are elements of $\mathcal{N}$, where $\mathcal{N} \prec \mathcal{H}_\theta$.  Then obviously $M_\bullet \restriction \mathcal{N}$ is also a complex, since any restriction of a complex is another complex.  However, if $M_\bullet$ is exact, we seem to need some further assumption to ensure that $M_\bullet \restriction \mathcal{N}$ will be exact.  To see where the problem arises, consider the problem of showing that the kernel of $f_{n+1} \restriction \langle \mathcal{N} \cap M_{n+1} \rangle$ is contained in the image of $f_{n} \restriction \langle \mathcal{N} \cap M_{n}\rangle$.  Say $y=\sum_i r_i b_i$ is in the kernel of $f_{n+1} \restriction \langle \mathcal{N} \cap M_{n+1} \rangle$, where each $b_i$ is an element of $\mathcal{N} \cap M_{n+1}$ and each $r_i$ is an element of $R$.  Unless we know that each $b_i$ itself is in the kernel of $f_{n+1}$, it is not clear how to show that $y$ is in the image of $f_{n} \restriction \langle \mathcal{N} \cap M_{n} \rangle$.  We seem to need something like the following equality, where $K:= \text{ker}(f_{n+1}) $:\footnote{Note that $K \in \mathcal{N}$ because it is definable from $f_{n+1}$, and $f_{n+1}$ is an element of $\mathcal{N}$.}
\begin{equation}\label{eq_WantAtK}
K \cap \langle \mathcal{N} \cap M_{n+1} \rangle = \langle \mathcal{N} \cap K \rangle.
\end{equation}
The $\supseteq$ direction of \eqref{eq_WantAtK} is trivially true, but the other inclusion is false in general (see Theorem \ref{thm_CharactNoeth} below).  If $R \subset \mathcal{N}$ then the entire problem trivializes, since $\langle \mathcal{N} \cap M_{n+1} \rangle = \mathcal{N} \cap M_{n+1}$ and $\langle \mathcal{N} \cap K \rangle = \mathcal{N} \cap K$ in that case.  But, as discussed below, we would like to avoid having to assume that $R \subset \mathcal{N}$.  We will see in Section \ref{sec_Noetherian} that if $R$ is $<\kappa$-Noetherian and  $\mathcal{N} \cap \kappa$ is transitive, then the equality \eqref{eq_WantAtK} will indeed hold.

\end{remark}

As noted in Remark \ref{rem_RestrictExact}, one way to ensure that exactness of $M_\bullet$ implies exactness of $M_\bullet \restriction \mathcal{N}$ (assuming $M_\bullet \in \mathcal{N}$) is to assume that $R \subset \mathcal{N}$.
The reader who is only interested in the general results of the kind ``such-and-such class is deconstructible", without caring about the degree of the deconstructibility, may as well just assume that all of the elementary submodels considered have $R$ as a subset, and skip the results below about $<\kappa$-Noetherian rings.  However, if one is interested in showing that a class is, say, $<\aleph_1$-deconstructible but the ring is uncountable---e.g.\ if one wants to address the (still open) problem of whether $\mathcal{GP}$ is $<\aleph_1$-deconstructible for all $<\aleph_1$-Noetherian rings $R$---then one will likely need to deal with countable elementary submodels that do \textbf{not} contain $R$ as a subset (see Theorem \ref{thm_AbstractImplyDec} for how one might attempt to do this).

 A ring is \textbf{$\boldsymbol{<\kappa}$-Noetherian} if all ideals in $R$ are $<\kappa$-generated; so $<\aleph_0$-Noetherian is the ordinary Noetherian property, and $R$ is always (at worst) $<|R|^+$-Noetherian.  We write $\aleph_0$-Noetherian instead of $<\aleph_1$-Noetherian.  Theorem \ref{thm_CharactNoeth} below characterizes the $<\kappa$-Noetherian rings.  It is motivated by the problem pointed out in Remark \ref{rem_RestrictExact}.
First we need a lemma:
\begin{lemma}\label{lem_SmallGenSet}, 
Suppose $M$ is a $<\kappa$-generated $R$-module, and $\mathcal{N} \prec \mathcal{H}_\theta$ is such that $M,R \in \mathcal{N}$ and $\mathcal{N} \cap \kappa$ is transitive.  Then $\mathcal{N} \cap M$ generates $M$.
\end{lemma}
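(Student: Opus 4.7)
The plan is to exploit elementarity to pull a small generating set of $M$ into $\mathcal{N}$, and then use the transitivity hypothesis on $\mathcal{N} \cap \kappa$ to conclude that this small set sits entirely inside $\mathcal{N}$ (not just as an element of $\mathcal{N}$, but as a subset).

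First I would observe that ``$M$ is a $<\kappa$-generated $R$-module'' is expressible in the structure $\mathcal{H}_\theta$ using the parameters $M$, $R$, $\kappa$, all of which lie in $\mathcal{N}$ by hypothesis. Hence, by elementarity of $\mathcal{N}$ in $\mathcal{H}_\theta$, there exists a generating set $G \in \mathcal{N}$ for $M$ with $\mathcal{H}_\theta \models |G| < \kappa$; since $|G|<\kappa$ is absolute between $\mathcal{H}_\theta$ and $V$ (the parameters and witnesses live in $H_\theta$), $G$ is genuinely a $<\kappa$-sized generating set for $M$.

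Next I would apply the last clause of Fact~\ref{fact_BasicElemSub}: since $G \in \mathcal{N}$, $|G|<\kappa$, and $\mathcal{N} \cap \kappa$ is transitive, $G$ is in fact a \emph{subset} of $\mathcal{N}$. Because $G \subseteq M$ we conclude $G \subseteq \mathcal{N} \cap M$, and since $G$ generates $M$ so does the larger set $\mathcal{N} \cap M$, which is what we wanted.

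There is essentially no obstacle here; the lemma is really just a packaging of Fact~\ref{fact_BasicElemSub} together with the observation that ``being $<\kappa$-generated'' is witnessed by an object that elementarity can locate inside $\mathcal{N}$. The only subtle point worth flagging is why the transitivity of $\mathcal{N} \cap \kappa$ is indispensable: without it, one could have $G \in \mathcal{N}$ while individual elements of $G$ escape $\mathcal{N}$ (e.g.\ if $\kappa = \aleph_2$ and $\mathcal{N}$ is a countable elementary submodel whose intersection with $\aleph_1$ is some countable ordinal but which contains some $\omega_1$-sized set of generators as an element, those generators need not belong individually to $\mathcal{N}$).
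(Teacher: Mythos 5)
Your proposal is correct and is essentially identical to the paper's own proof: use elementarity to find a $<\kappa$-sized generating set $X \in \mathcal{N}$, then apply the last clause of Fact~\ref{fact_BasicElemSub} (with the transitivity of $\mathcal{N}\cap\kappa$) to get $X \subseteq \mathcal{N}\cap M$, so $\mathcal{N}\cap M$ generates $M$. One tiny caveat: the lemma's hypotheses do not literally include $\kappa \in \mathcal{N}$, so rather than quantifying over ``sets of size $<\kappa$'' inside $\mathcal{N}$ one should note that the \emph{least} cardinality of a generating set for $M$ is definable from $M$ and $R$ alone (hence lies in $\mathcal{N}$) and is $<\kappa$ by hypothesis---a cosmetic repair that the paper's own one-line proof also elides.
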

\begin{proof}
By elementarity of $\mathcal{N}$, there is an $X \in \mathcal{N}$ of size $<\kappa$ that generates $M$.  By Fact \ref{fact_BasicElemSub} and the assumption that $\mathcal{N} \cap \kappa$ is transitive, $X \subset \mathcal{N}$.  Hence $M=\langle X \rangle = \langle \mathcal{N} \cap M \rangle$.
\end{proof}

\begin{theorem}\label{thm_CharactNoeth}
Let $\kappa$ be an infinite regular cardinal.  Let $R$ be an associative ring (but possibly non-unital).  Consider the following assertions about $R$:
\begin{enumerate}[label=(\alph*)]
 \item\label{item_Noeth} $R$ is $<\kappa$-Noetherian.
 \item\label{item_AllElemSubm} Whenever $G$ is an $R$-submodule of $M$, and $\mathcal{N}$ is an elementary submodel of some $\mathcal{H}_\theta$ such that:
\begin{itemize} 
 \item $\mathcal{N} \cap \kappa$ is transitive; and
 \item $R$, $G$, and $M$ are elements of $\mathcal{N}$;
\end{itemize}
 then
\begin{equation}\label{eq_Noeth}
G \cap \langle \mathcal{N} \cap M \rangle^M_R = \langle \mathcal{N} \cap G \rangle^M_R.
\end{equation}
\end{enumerate}

\noindent Statement \ref{item_Noeth} always implies statement \ref{item_AllElemSubm}.  If $\kappa$ is uncountable and $R$ is unital, then the converse is also true. 
\end{theorem}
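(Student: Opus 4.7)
The plan is to prove the two directions separately, with the $\supseteq$ inclusion in (\ref{eq_Noeth}) being immediate from the fact that every element of $\mathcal{N} \cap G$ lies in both $G$ and $\langle \mathcal{N} \cap M \rangle$. For the forward direction \ref{item_Noeth} $\Rightarrow$ \ref{item_AllElemSubm} the key idea is to transfer the problem from $M$ to a submodule of $R^n$ and then invoke $<\kappa$-Noetherianity together with Lemma \ref{lem_SmallGenSet}; for the reverse direction, I would use a witness ideal and a suitably small elementary submodel to violate (\ref{eq_Noeth}).

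For \ref{item_Noeth} $\Rightarrow$ \ref{item_AllElemSubm}, take $x \in G \cap \langle \mathcal{N} \cap M \rangle$ and write $x = \sum_{i=1}^n r_i m_i$ with $r_i \in R$ and $m_i \in \mathcal{N} \cap M$. The finite tuple $\vec{m}$ belongs to $\mathcal{N}$ by Fact \ref{fact_BasicElemSub}, so the ``coefficient submodule''
\[
I_{\vec{m}} := \left\{ (s_1,\ldots,s_n) \in R^n \ : \ \sum_i s_i m_i \in G \right\}
\]
lies in $\mathcal{N}$ by elementarity (it is definable from $\vec{m}, G, R$). A short-exact-sequence induction on $n$, using the sequence $0 \to R \to R^n \to R^{n-1} \to 0$, shows that $<\kappa$-Noetherianity of $R$ forces every submodule of $R^n$ to be $<\kappa$-generated; so Lemma \ref{lem_SmallGenSet} gives $\langle \mathcal{N} \cap I_{\vec{m}} \rangle = I_{\vec{m}}$. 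Writing $(r_1,\ldots,r_n) = \sum_j a_j \vec{s}_j$ with $\vec{s}_j = (s_{j,1},\ldots,s_{j,n}) \in \mathcal{N} \cap I_{\vec{m}}$ and substituting back yields $x = \sum_j a_j g_j$, where each $g_j := \sum_i s_{j,i} m_i$ lies in $\mathcal{N} \cap G$ (it is in $G$ because $\vec{s}_j \in I_{\vec{m}}$, and in $\mathcal{N}$ because its components $s_{j,i}$ and the $m_i$ all are). Hence $x \in \langle \mathcal{N} \cap G \rangle$, as required.

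For \ref{item_AllElemSubm} $\Rightarrow$ \ref{item_Noeth} under the assumptions that $\kappa$ is uncountable and $R$ is unital, I would argue contrapositively. If some left ideal $I$ of $R$ fails to be $<\kappa$-generated, apply Fact \ref{fact_SmoothChain} with $\lambda = \kappa$ (and with $R,I$ in the signature of $\mathfrak{A}$) to obtain an $\mathcal{N} \prec \mathcal{H}_\theta$ with $|\mathcal{N}| < \kappa$, $\mathcal{N} \cap \kappa$ transitive, and $\{R, I\} \subset \mathcal{N}$. Specialize \ref{item_AllElemSubm} to $M := R$ and $G := I$: unitality and elementarity put $1_R$ into $\mathcal{N}$, so $\langle \mathcal{N} \cap R \rangle^R_R = R$, and the left-hand side of (\ref{eq_Noeth}) equals $I$. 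On the other hand $|\mathcal{N} \cap I| \le |\mathcal{N}| < \kappa$, so $\langle \mathcal{N} \cap I \rangle$ is a $<\kappa$-generated, hence proper, submodule of $I$. This contradicts \ref{item_AllElemSubm}.

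The main obstacle is really the interplay of the two extra hypotheses in the converse, and explains why they cannot be dropped. For $\kappa = \aleph_0$, every $\mathcal{N} \prec \mathcal{H}_\theta$ is automatically infinite because $\omega \subseteq \mathcal{N}$ by Fact \ref{fact_BasicElemSub}, so the crucial cardinality bound $|\mathcal{N}| < \kappa$ is unattainable and the witness construction collapses; and without unitality the step $\langle \mathcal{N} \cap R \rangle^R_R = R$ may fail, so the left-hand side of (\ref{eq_Noeth}) cannot be pinned down to $I$. Both hypotheses are essential to the cardinality-contradiction strategy outlined above, while the forward direction uses neither.
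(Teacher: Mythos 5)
Your proof is correct. The converse direction is essentially the paper's own argument: a left ideal $I$ that is not $<\kappa$-generated, a $<\kappa$-sized $\mathcal{N}$ with $\mathcal{N}\cap\kappa\in\kappa$ containing $R$ and $I$ as elements, unitality forcing $\langle\mathcal{N}\cap R\rangle^R_R=R$ so that the left side of \eqref{eq_Noeth} is all of $I$, while the right side is $<\kappa$-generated. The forward direction, however, takes a genuinely different route. The paper first treats a free module $F$ with basis $\vec{b}\in\mathcal{N}$, observes that each $G_{\le\alpha}/G_{<\alpha}$ is isomorphic to a left ideal via coefficient extraction (hence $<\kappa$-generated), runs a transfinite induction over $\alpha\in\mathcal{N}\cap[0,\mu]$ to show $\mathcal{N}\cap G_{<\alpha}$ generates $\langle\mathcal{N}\rangle^F\cap G_{<\alpha}$, and finally pulls $G$ back along a surjection $\pi:F\to M$ with $\pi\in\mathcal{N}$. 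You instead fix $x=\sum_i r_i m_i$ and work with the coefficient submodule $I_{\vec{m}}\le R^n$, which lies in $\mathcal{N}$ by elementarity and is $<\kappa$-generated because submodules of finite-rank free modules over a $<\kappa$-Noetherian ring are $<\kappa$-generated (a finite induction); Lemma \ref{lem_SmallGenSet} then gives $I_{\vec{m}}=\langle\mathcal{N}\cap I_{\vec{m}}\rangle$ and substitution finishes. This is entirely local and avoids both the reduction to free modules and the transfinite recursion; what the paper's longer detour buys is essentially nothing extra for this theorem. The one caveat concerns the non-unital case, which the forward direction is meant to cover (see the remark after Theorem \ref{thm_CharacterizeDecon_COMPLEXES}): there $\langle X\rangle$ must include $\mathbb{Z}$-multiples as well as $R$-multiples, so your coefficient module should live in $(R^1)^n$ for the unitalization $R^1$, and one must check that left ideals of $R^1$ relevant to the argument remain $<\kappa$-generated. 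This is a cosmetic adjustment of the same kind the paper's own free-module argument silently requires, so it does not count against your proof.
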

\begin{proof}

To prove \ref{item_Noeth} $\implies$ \ref{item_AllElemSubm}, assume that $R$ is $<\kappa$-Noetherian, and let $G$, $M$, and $\mathcal{N}$ be as in the hypothesis of part \ref{item_AllElemSubm}.  The $\supseteq$ direction of \eqref{eq_Noeth} is trivially true, so we just need to show the $\subseteq$ direction; we first show it under the additional assumption that $M$ is free, and will denote $M$ by $F$ for this special case.  By elementarity of $\mathcal{N}$ there is a basis $\vec{b} = \{ b_i \ : \ i < \mu \}$ of $F$ such that $\vec{b} \in \mathcal{N}$.  For each $\alpha \le \mu$, define
\[
G_{<\alpha}:= G \cap \left\langle b_i \ : \ i < \alpha  \right\rangle   
\]
and for each $\alpha < \mu$ set 
\[
 G_{\le\alpha}:= G \cap \left\langle b_i \ : \ i \le \alpha  \right\rangle
\]
and define
\[
\phi_\alpha: G_{\le \alpha} \to R
\]
by extracting the coefficient on $b_\alpha$.  Then $\text{ker}(\phi_\alpha) = G_{<\alpha}$, so $G_{\le \alpha} / G_{<\alpha}$ is isomorphic to an ideal in $R$, and hence by the $<\kappa$-Noetherian property,
\begin{equation}
G_{\le \alpha} / G_{<\alpha} \text{ is } <\kappa \text{-generated.}
\end{equation}

If $\alpha$ happens to be an element of $\mathcal{N}$, then $G_{<\alpha}$ and $G_{\le \alpha}$ are both elements of $\mathcal{N}$, so $\mathcal{N}$ sees that $G_{\le \alpha} / G_{<\alpha}$ is $<\kappa$-generated.   So, by Lemma \ref{lem_SmallGenSet}, 
\begin{equation}\label{eq_QuotientInN_2}
\forall \alpha \in \mathcal{N} \cap \mu \ \ \  \mathcal{N} \cap \frac{G_{\le \alpha}}{G_{<\alpha}} \text{ generates } \frac{G_{\le \alpha}}{G_{<\alpha}}.
\end{equation}

The following claim will finish the free case (note that $\mu \in \mathcal{N}$ because it is the length of $\vec{b}$, and $\vec{b} \in \mathcal{N}$):
\begin{nonGlobalClaim}\label{clm_NcapGlessalpha}
For all $\alpha \in \mathcal{N} \cap [0,\mu]$, 
\[
\mathcal{N} \cap G_{<\alpha} \text{ generates } \langle \mathcal{N} \rangle^F \cap G_{<\alpha}.
\] 
\end{nonGlobalClaim}
\begin{proof}
(of Claim \ref{clm_NcapGlessalpha}):  Suppose $\alpha \in \mathcal{N} \cap [0,\mu]$ and the statement holds at all $\beta \in \mathcal{N} \cap [0,\alpha)$.  If $\alpha$ is a limit ordinal, then (by elementarity of $\mathcal{N}$) there is no largest element of $\mathcal{N} \cap \alpha$; it follows that any element of  
\[
\langle \mathcal{N} \rangle^F \cap G_{<\alpha}
\]
is an element of $\langle \mathcal{N} \rangle^F \cap G_{<\beta}$ for some $\beta \in \mathcal{N} \cap \alpha$, and hence a linear combination of members of $\mathcal{N} \cap G_{<\beta}$ by the induction hypothesis.

Now suppose $\alpha \in \mathcal{N}$ is a successor ordinal, say, $\alpha = \beta+1$.  Then $\beta \in \mathcal{N}$ too.  By the induction hypothesis,
\[
\mathcal{N} \cap G_{<\beta} \text{ generates }  \langle \mathcal{N}\rangle^F \cap G_{<\beta}. 
\]

And by \eqref{eq_QuotientInN_2},
\begin{equation}\label{eq_QuotBeta}
\mathcal{N} \cap \frac{G_{\le \beta}}{G_{< \beta}} \text{ generates } \frac{G_{\le \beta}}{G_{< \beta}}
\end{equation}

We need to show that $\mathcal{N} \cap G_{\le \beta}$ generates $\langle \mathcal{N}\rangle^F \cap G_{\le \beta}$; so assume $g \in \langle \mathcal{N}\rangle^F \cap G_{\le \beta}$.  By \eqref{eq_QuotBeta}, there are cosets $c_1, \dots, c_k$, each in $\mathcal{N} \cap \frac{G_{\le \beta}}{G_{<\beta}}$, and some $r_1,\dots r_k \in R$ such that $\sum_{m=1}^k r_m c_m = g + G_{<\beta}$.  By elementarity of $\mathcal{N}$, $c_m = g'_m + G_{<\beta}$ for some $g'_m \in \mathcal{N} \cap G_{\le \beta}$ (for each $m=1,\dots,k$).  Set $g':= \sum_{m=1}^k r_m g'_m$, and note that $g' + G_{<\beta} = g + G_{<\beta}$.  Then:
\begin{itemize}
 \item $g' \in \langle \mathcal{N} \rangle$.  Since $g$ is also in $\langle \mathcal{N} \rangle$, it follows that $g-g' \in \langle \mathcal{N} \rangle$.
 \item $g-g' \in G_{<\beta}$.
\end{itemize}
Hence $g-g' \in \langle \mathcal{N} \rangle^F \cap G_{<\beta}$, so by the induction hypothesis, $g-g'$ is a linear combination of members of $\mathcal{N} \cap G_{<\beta}$; say
\[
g-g' = \sum_i r_i h_i
\]
where each $h_i \in \mathcal{N} \cap G_{<\beta}$ and $r_i \in R$.  Then
\[
g = g' + \sum_i r_ih_i
= \sum_m r_m g'_m + \sum_i r_ih_i \]
is a linear combination of members of $\mathcal{N} \cap G_{\le \beta}$.

\end{proof}

Finally, assume $M$ is any $R$-module.  By elementarity of $\mathcal{N}$, there is a free module $F$ and a surjective homomorphism
\[
\pi: F \to M
\]
such that $\pi \in \mathcal{N}$.  Let $\bar{G}:= \pi^{-1}[G]$, and note that $\bar{G} \in \mathcal{N}$ because both $\pi$ and $G$ are in $\mathcal{N}$.  By the result for free modules above, 
\begin{equation}\label{eq_InclusionForGbar}
\bar{G} \cap \langle \mathcal{N} \cap F \rangle^F = \langle \mathcal{N} \cap \bar{G} \rangle^F.
\end{equation}
Suppose $g$ is an element of $G \cap \langle \mathcal{N} \cap M \rangle^M$; then $g = \sum_i r_i x_i$ for some $r_i \in R$ and $x_i \in \mathcal{N}$.  By surjectivity of $\pi$ and elementarity of $\mathcal{N}$, for each $i$ there is a $\bar{x}_i \in \mathcal{N} \cap F$ such that $\pi(\bar{x}_i) = x_i$.  Let $\bar{x}:= \sum_i r_i \bar{x}_i$ in $F$, and observe that $\pi(\bar{x}) = g$; hence, $\bar{x} \in \bar{G}$.  Also, $\bar{x} \in \langle \mathcal{N} \cap F\rangle$, so by \eqref{eq_InclusionForGbar}, $\bar{x}$ is a linear combination of members of $\mathcal{N} \cap \bar{G}$; say $\bar{x} = \sum_j t_j \bar{g}_j$ where each $\bar{g}_j \in \mathcal{N} \cap \bar{G}$.  Then $\pi(\bar{g}_j) \in \mathcal{N} \cap G$ for each $j$, so $g = \pi(\bar{x}) = \sum_j t_j \pi(\bar{g}_j)$ is an element of $\langle \mathcal{N} \cap G \rangle$.

For the other direction of the theorem, suppose $R$ has a unit and $\kappa$ is (regular) and uncountable.  Let $G$ be an ideal in $R$ that is not $<\kappa$-generated; we show that $G$ and $M:=R$ provide a counterexample to part \ref{item_AllElemSubm}.  Since $\kappa$ is regular and uncountable, there is an $\mathcal{N} \prec \mathcal{H}_\theta$ (for sufficiently large $\theta$) such that $R,G \in \mathcal{N}$, $|\mathcal{N}|<\kappa$, and $\mathcal{N} \cap \kappa \in \kappa$.  By elementarity of $\mathcal{N}$ and the fact that $R \in \mathcal{N}$, it follows that $1_R \in \mathcal{N}$, and hence $\langle \mathcal{N} \cap R \rangle^R_R = R$.  So $G \cap \langle \mathcal{N} \cap R \rangle^R_R = G$.  On the other hand, $\langle \mathcal{N} \cap G \rangle^M_R$ cannot contain $G$, since $|\mathcal{N} \cap G| \le |\mathcal{N}|<\kappa$ and $G$ is not $<\kappa$-generated.
\end{proof}

If $A$ is an $R$-submodule of $B$ and $\{ R,A,B \} \subset \mathcal{N} \prec \mathcal{H}_\theta$, it is easy to show that the factor map $b \mapsto b + A$, when  restricted to the domain $\langle \mathcal{N} \cap B \rangle$, has kernel $A \cap \langle \mathcal{N} \cap B \rangle$ and maps onto $\left\langle \mathcal{N} \cap \frac{B}{A} \right\rangle$, and hence that
\[
 \frac{\langle \mathcal{N} \cap B \rangle}{A \cap \langle \mathcal{N} \cap B \rangle} \simeq \left\langle \mathcal{N} \cap \frac{B}{A} \right\rangle.
\]
If $R$ is $<\kappa$-Noetherian and $\mathcal{N} \cap \kappa$ is transitive, things work out a little more nicely:
\begin{lemma}\label{lem_IntersectQuotient}
Suppose $R$ is $<\kappa$-Noetherian, $A$ is an $R$-submodule of $B$, $\{ R,A,B \} \subset \mathcal{N} \prec \mathcal{H}_\theta$, and $\mathcal{N} \cap \kappa$ is transitive.  Then the map
\[
\sum_i r_i x_i + \langle \mathcal{N} \cap A \rangle \mapsto \sum_i r_i x_i + A
\]
(where each $r_i \in R$ and $x_i \in \mathcal{N} \cap B$) is a well-defined isomorphism from $\frac{\langle \mathcal{N} \cap B \rangle }{    \langle \mathcal{N} \cap A \rangle }$ onto $\left \langle \mathcal{N} \cap \frac{B}{A} \right \rangle$.
\end{lemma}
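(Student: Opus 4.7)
The plan is to realize the claimed map as the one induced by the natural projection $\pi : B \twoheadrightarrow B/A$ restricted to $\langle \mathcal{N} \cap B\rangle$, and then invoke Theorem \ref{thm_CharactNoeth} to compute its kernel.

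First I would observe that since $A, B \in \mathcal{N}$, the quotient map $\pi$ is definable from $A$ and $B$ and hence lies in $\mathcal{N}$ by elementarity. Restricting $\pi$ to the submodule $\langle \mathcal{N} \cap B\rangle$ therefore gives an $R$-linear map whose values, on any element $\sum_i r_i x_i$ with $x_i \in \mathcal{N} \cap B$ and $r_i \in R$, are precisely $\sum_i r_i x_i + A$. This is exactly the assignment in the statement, so identifying $\frac{\langle \mathcal{N} \cap B\rangle}{\langle \mathcal{N}\cap A\rangle}$ with $\langle \mathcal{N} \cap B/A\rangle$ via the First Isomorphism Theorem will establish the lemma, provided we identify the image and kernel correctly.

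For the image, I would argue that $\pi[\langle \mathcal{N}\cap B\rangle] = \langle \mathcal{N}\cap B/A\rangle$. The inclusion $\subseteq$ follows because each $x \in \mathcal{N} \cap B$ has $\pi(x) \in \mathcal{N} \cap B/A$ by elementarity (since $\pi \in \mathcal{N}$). For $\supseteq$, any generator $\bar{y} \in \mathcal{N} \cap B/A$ has, by elementarity and surjectivity of $\pi$, a witness $y \in \mathcal{N} \cap B$ with $\pi(y) = \bar{y}$, so every $R$-linear combination of such $\bar{y}$'s is hit by $\pi \restriction \langle \mathcal{N} \cap B\rangle$.

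For the kernel, I would compute
\[
\ker\bigl(\pi \restriction \langle \mathcal{N}\cap B\rangle\bigr) = A \cap \langle \mathcal{N}\cap B\rangle.
\]
This is where the $<\kappa$-Noetherian hypothesis enters: since $R, A, B \in \mathcal{N}$ and $\mathcal{N} \cap \kappa$ is transitive, Theorem \ref{thm_CharactNoeth} (applied with $G := A$ and $M := B$) gives
\[
A \cap \langle \mathcal{N}\cap B\rangle = \langle \mathcal{N}\cap A\rangle.
\]
Applying the First Isomorphism Theorem to $\pi \restriction \langle \mathcal{N} \cap B\rangle$ yields the desired isomorphism, and the map induced on cosets is exactly the one written in the statement; in particular, well-definedness and injectivity are both packaged into this single application of Theorem \ref{thm_CharactNoeth}.

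The only non-routine ingredient is the use of the $<\kappa$-Noetherian hypothesis to identify $A \cap \langle \mathcal{N}\cap B\rangle$ with $\langle \mathcal{N}\cap A\rangle$; without it, one only has the trivial inclusion $\supseteq$, and the induced map on the quotient would not in general be injective. Everything else is a direct consequence of elementarity plus the First Isomorphism Theorem.
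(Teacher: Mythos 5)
Your proposal is correct and follows essentially the same route as the paper: the paper verifies well-definedness, injectivity, and surjectivity of the coset map directly, with the crucial step being exactly the identity $A \cap \langle \mathcal{N}\cap B\rangle = \langle \mathcal{N}\cap A\rangle$ from Theorem \ref{thm_CharactNoeth}, while you package the same computation as the First Isomorphism Theorem applied to $\pi \restriction \langle \mathcal{N}\cap B\rangle$. The two arguments are interchangeable.
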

\begin{proof}
That the map is well-defined follows from the fact that $\langle \mathcal{N} \cap A \rangle \subseteq A$.  To see that the map is injective, suppose 
\[
\sum_i r_i x_i + A = \sum_j s_j y_j + A
\]
where the $r_i$'s and $s_j$'s come from $R$, and the $x_i$'s and $y_j$'s come from $\mathcal{N} \cap B$.  Then
\[
\sum_i r_i x_i - \sum_j s_j y_j \in A \cap \langle \mathcal{N} \cap B \rangle = \langle \mathcal{N} \cap A \rangle,
\]
where the right equality is by Theorem \ref{thm_CharactNoeth}.  The map is surjective because any coset in $\mathcal{N} \cap \frac{B}{A}$ is, by elementarity of $\mathcal{N}$, of the form $x + A$ for some $x \in \mathcal{N}$.
\end{proof}

\begin{lemma}\label{lem_ExactRestrctNoProjective} Suppose $\kappa$ is regular, $R$ is $<\kappa$-Noetherian, and $\mathcal{N} \prec \mathcal{H}_\theta$ is such that $R \in \mathcal{N}$ and $\mathcal{N} \cap \kappa$ is transitive.  Suppose $M_\bullet = \big( f_n: M_n \to M_{n+1} \big)_{n \in \mathbb{Z}}$ is an exact complex of $R$-modules, and $M_\bullet \in \mathcal{N}$.  Then:
\begin{enumerate}
 \item\label{item_ExactRestrictAndQuot}  $M_\bullet \restriction \mathcal{N}$ and $M_\bullet / \mathcal{N}$ (as defined in Lemma \ref{lem_restrictComplexN})  are both exact.

 \item\label{item_KernelRestrict} The kernel of the $n$-th map in $M_\bullet \restriction \mathcal{N}$ is $\langle \mathcal{N} \cap \text{ker}(f_n) \rangle$. 
 \item\label{item_KernelIsoQuot}    The kernel of the $n$-th map in $M_\bullet / \mathcal{N}$ is isomorphic to
   \[
\frac{  \text{ker}(f_n)}{ \big\langle \mathcal{N} \cap \text{ker}(f_n) \big\rangle};
  \]
 \item\label{item_ProjComplexRestrictProj} If $M_\bullet$ consists entirely of projective modules, then so do $M_\bullet \restriction \mathcal{N}$ and $M_\bullet / \mathcal{N}$.
 
 \item\label{item_QuotientComplex} If $Q_\bullet$ is a subcomplex of $M_\bullet$ and $Q_\bullet$ is also an element of $\mathcal{N}$, then 
\[
\frac{M_\bullet}{Q_\bullet} \restriction \mathcal{N} \ \simeq \ \frac{M_\bullet \restriction \mathcal{N}}{Q_\bullet \restriction \mathcal{N}}.
\]
\end{enumerate}
\end{lemma}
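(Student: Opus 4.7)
\bigskip
\noindent\textbf{Proof plan for Lemma \ref{lem_ExactRestrctNoProjective}.}

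The plan is to deduce everything from Theorem \ref{thm_CharactNoeth} together with Lemma \ref{lem_Kaplansky} and Lemma \ref{lem_IntersectQuotient}, so the argument is essentially mechanical once those three results are in hand. I would prove the five clauses in the order (2), (1), (3), (4), (5), since each one uses its predecessor.

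First I would prove (2). Since $f_n \in \mathcal{N}$ by Lemma \ref{lem_restrictComplexN}, the module $\ker(f_n)$ is definable from $f_n$ and hence lies in $\mathcal{N}$. Applying Theorem \ref{thm_CharactNoeth} with $G = \ker(f_n)$ and $M = M_n$ gives $\ker(f_n) \cap \langle \mathcal{N} \cap M_n \rangle = \langle \mathcal{N} \cap \ker(f_n)\rangle$, and by equation \eqref{eq_KerAt_n_NoNoeth} of Lemma \ref{lem_restrictComplexN} the left-hand side is exactly the kernel of the $n$-th map in $M_\bullet \restriction \mathcal{N}$. For the first half of (1) I would show, at each position $n+1$, that $\ker\bigl(f_{n+1}\restriction\langle\mathcal{N}\cap M_{n+1}\rangle\bigr) \subseteq \text{im}\bigl(f_n\restriction\langle\mathcal{N}\cap M_n\rangle\bigr)$; by (2) and the exactness of $M_\bullet$, this kernel equals $\langle \mathcal{N} \cap \text{im}(f_n)\rangle$, and any generator $b \in \mathcal{N} \cap \text{im}(f_n)$ has, by elementarity of $\mathcal{N}$, a preimage $a \in \mathcal{N} \cap M_n$ under $f_n$, so $b \in \text{im}(f_n\restriction\langle\mathcal{N}\cap M_n\rangle)$. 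The reverse inclusion is immediate from \eqref{eq_ImAt_n_NoNoeth} together with $M_\bullet$ being a complex. Exactness of $M_\bullet/\mathcal{N}$ then follows from the long exact sequence in homology associated to the levelwise short exact sequence of complexes $0 \to M_\bullet\restriction\mathcal{N} \to M_\bullet \to M_\bullet/\mathcal{N} \to 0$, since $H_*(M_\bullet) = 0$ and (by the step just proved) $H_*(M_\bullet\restriction\mathcal{N}) = 0$.

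For (3) I would compute the kernel $K_n$ of the $n$-th map in $M_\bullet/\mathcal{N}$ directly as
\[
K_n \;=\; f_n^{-1}\bigl(\langle\mathcal{N}\cap M_{n+1}\rangle\bigr)\big/\langle\mathcal{N}\cap M_n\rangle.
\]
Exactness of $M_\bullet$ plus the first-half argument of (1) shows that $\langle\mathcal{N}\cap M_{n+1}\rangle \cap \text{im}(f_n) = \text{im}\bigl(f_n\restriction\langle\mathcal{N}\cap M_n\rangle\bigr)$, so the numerator equals $\langle\mathcal{N}\cap M_n\rangle + \ker(f_n)$. A second isomorphism theorem then gives $K_n \simeq \ker(f_n)/\bigl(\ker(f_n)\cap\langle\mathcal{N}\cap M_n\rangle\bigr)$, and one final application of Theorem \ref{thm_CharactNoeth} rewrites the denominator as $\langle\mathcal{N}\cap\ker(f_n)\rangle$, as desired.

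Clause (4) is immediate from Lemma \ref{lem_Kaplansky} applied termwise: each $M_n$ is projective and satisfies the hypotheses of that lemma, so $\langle\mathcal{N}\cap M_n\rangle$ and $M_n/\langle\mathcal{N}\cap M_n\rangle$ are projective. Clause (5) follows termwise from Lemma \ref{lem_IntersectQuotient} applied with $A = Q_n$ and $B = M_n$, which gives the natural isomorphism $\langle\mathcal{N}\cap M_n\rangle/\langle\mathcal{N}\cap Q_n\rangle \simeq \langle\mathcal{N}\cap(M_n/Q_n)\rangle$; verifying that these level-wise isomorphisms commute with the differentials is a routine diagram chase using that all maps and subcomplexes involved lie in $\mathcal{N}$. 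The only nontrivial obstacle in the whole argument is the passage to the second-half of (1), which is the one place where both the $<\kappa$-Noetherian hypothesis (via (2)) and the long-exact-sequence trick are really needed; everything else is bookkeeping.
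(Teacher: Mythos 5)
Your proposal is correct and follows essentially the same route as the paper: both arguments rest on Theorem \ref{thm_CharactNoeth} (to identify kernels and images of the restricted maps with $\langle\mathcal{N}\cap\ker(f_n)\rangle$ and $\langle\mathcal{N}\cap\operatorname{im}(f_{n-1})\rangle$), the elementarity argument for surjectivity onto $\mathcal{N}\cap\operatorname{im}(f_n)$, Lemma \ref{lem_Kaplansky} for clause (4), and Lemma \ref{lem_IntersectQuotient} for clause (5). The only differences are cosmetic: you invoke the long exact homology sequence where the paper cites the 3-by-3 lemma for the exactness of $M_\bullet/\mathcal{N}$, and for clause (3) you spell out via the second isomorphism theorem what the paper dismisses as ``abstract nonsense,'' which is if anything a more careful presentation.
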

\begin{proof}
Fix any $n \in \mathbb{Z}$; note that $n \in \mathcal{N}$ by Fact \ref{fact_BasicElemSub}. Both $\text{ker}(f_n)$ and $\text{im}(f_{n-1})$ are elements of $\mathcal{N}$, because $M_\bullet \in \mathcal{N}$ and they are definable from the parameters $M_\bullet$ and $n$.  Then 
\begin{equation}\label{eq_PullKernelIn}
\text{ker} \Big( f_n \restriction \langle \mathcal{N} \cap M_n \rangle \Big) = \text{ker}(f_n) \cap \langle \mathcal{N} \cap M_n \rangle = \langle \mathcal{N} \cap \text{ker}(f_n) \rangle,
\end{equation}
where the right equality is by Theorem \ref{thm_CharactNoeth} and the left equality is obvious.  This takes care of part \eqref{item_KernelRestrict}.

We claim that
\begin{equation}\label{eq_ChainOfIms}
 \text{im}\Big( f_{n-1} \restriction \langle \mathcal{N} \cap M_{n-1} \rangle \Big) =  \text{im}(f_{n-1}) \cap \langle \mathcal{N} \cap M_n \rangle = \langle \mathcal{N} \cap \text{im}(f_{n-1}) \rangle.
\end{equation}
The right equality is immediate, by Theorem \ref{thm_CharactNoeth}.  The $\subseteq$ direction of the left equality follows from Lemma \ref{lem_restrictComplexN}.  To see the $\supseteq$ direction of the left equality, pick any $y \in \langle \mathcal{N} \cap \text{im}(f_{n-1}) \rangle$; then $y=\sum_i r_i y_i$ for some $r_i \in R$ and some $y_i \in \mathcal{N} \cap \text{im}(f_{n-1})$.  By elementarity of $\mathcal{N}$, for each $i$ there is some $x_i \in \mathcal{N}  \cap M_{n-1}$ such that $f_{n-1}(x_i) = y_i$.  Then $x:= \sum_i r_i x_i$ is an element of $\langle \mathcal{N} \cap M_{n-1} \rangle$, and $f_{n-1}(x) = y$.  So $y$ is in the image of $f_{n-1} \restriction \langle \mathcal{N} \cap M_{n-1} \rangle$.

Since $M_\bullet$ was exact by assumption, $\text{ker}(f_n) = \text{im}(f_{n-1})$, and hence the rightmost terms in \eqref{eq_PullKernelIn} and \eqref{eq_ChainOfIms} are equal to each other.  So
\[
\text{ker}\Big( f_n \restriction \langle \mathcal{N} \cap M \rangle  \Big) = \text{im} \Big(  f_{n-1} \restriction \langle \mathcal{N} \cap M_{n-1} \rangle \Big),
\]
yielding that $M_\bullet \restriction \mathcal{N}$ is exact.  So
\[
\xymatrix{
0_\bullet \ar[r] & M_\bullet \restriction \mathcal{N} \ar[r]^-{\text{id}_\bullet} & M_\bullet \ar[r]^-{\pi_\bullet} & M_\bullet / \mathcal{N} \ar[r] & 0_\bullet 
}
\]
is a short exact sequence of complexes, and both $M_\bullet\restriction \mathcal{N}$ and $M_\bullet$ are exact.  Then $M_\bullet / \mathcal{N}$ is also exact, by the 3-by-3 lemma for complexes (\cite{MR1269324}).

For part \eqref{item_KernelIsoQuot}:  abstract nonsense tells us that the kernel of the $n$-th map of $M_\bullet / \mathcal{N}$ is isomorphic to 
\begin{equation}\label{eq_FormWeDONTneed}
\frac{\text{ker}(f_n)}{\text{ker} \Big( f_n \restriction \langle \mathcal{N} \cap M_n \rangle \Big)},
\end{equation}
and by \eqref{eq_PullKernelIn} this is equal to 
  \begin{equation}\label{eq_FormWeNeed}
\frac{  \text{ker}(f_n)}{ \Big\langle \mathcal{N} \cap \text{ker}(f_n) \Big\rangle}.
  \end{equation}
Note:  we need form \eqref{eq_FormWeNeed}, not form \eqref{eq_FormWeDONTneed}, in order to apply Theorem \ref{thm_CharacterizeDecon} later on.

Part \eqref{item_ProjComplexRestrictProj} follows immediately from Lemma \ref{lem_Kaplansky}.

For part \eqref{item_QuotientComplex}, Lemma \ref{lem_IntersectQuotient} implies that for each $n \in \mathbb{Z}$, the map
\[
\sigma_n:   \frac{\langle \mathcal{N} \cap M_n \rangle}{\langle \mathcal{N} \cap Q_n \rangle} \to \left\langle \mathcal{N} \cap \frac{M_n}{Q_n} \right\rangle 
\]
defined by
\[
\sum_i r_i x_i + \langle \mathcal{N} \cap Q_n \rangle \mapsto \sum_i r_i x_i + Q_n
\]
---where the $x_i$'s are from $\mathcal{N} \cap M_n$---is an isomorphism.  If $f'_n$ denotes the $n$-th map of the complex $\frac{M_\bullet }{Q_\bullet } \restriction \mathcal{N}$, and $f''_n$ denotes the $n$-th map of the complex $\frac{M_\bullet \restriction \mathcal{N}}{Q_\bullet \restriction \mathcal{N}}$, then it is routine to check that
\[
f''_n = \sigma_{n+1}^{-1} \circ  f'_n   \circ \sigma_n.
\]
Hence, 
\[
\sigma_\bullet: \frac{M_\bullet \restriction \mathcal{N}}{Q_\bullet \restriction \mathcal{N}} \ \to \ \frac{M_\bullet}{Q_\bullet} \restriction \mathcal{N}
\]
is an isomorphism of complexes.  

\end{proof}

For a cardinal $\mu$, a module $M$ is called \textbf{strongly $\mu$-presented} if it has a projective resolution consisting of $\mu$-generated modules.  
\begin{lemma}\label{lem_StrongGen}
Suppose $\kappa$ is regular, $R$ is $<\kappa$-Noetherian, $M$ is any $R$-module, and $\mathcal{N} \prec \mathcal{N}' \prec \mathcal{H}_\theta$ are such that both $\mathcal{N}$ and $\mathcal{N}'$ have transitive intersection with $\kappa$, and $M,R \in \mathcal{N}$.  Then $\langle \mathcal{N} \cap M \rangle$ is strongly $|\mathcal{N}|$-presented and 
\[
\frac{\langle \mathcal{N}' \cap M \rangle}{\langle \mathcal{N} \cap M \rangle}
\]
is strongly $|\mathcal{N}'|$-presented.
\end{lemma}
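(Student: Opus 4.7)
The plan is to construct the required strong presentations by restricting a single ambient projective resolution of $M$, chosen to lie in $\mathcal{N}$, and then invoking Lemma \ref{lem_ExactRestrctNoProjective} for both $\mathcal{N}$ and $\mathcal{N}'$.

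First, since $V$ knows that $M$ admits a projective resolution, elementarity of $\mathcal{N}$ (together with the fact that $M, R \in \mathcal{N}$) gives an exact $\mathbb{Z}$-indexed complex $C_\bullet \in \mathcal{N}$ of the form
\[
\cdots \to P_2 \to P_1 \to P_0 \to M \to 0 \to 0 \to \cdots
\]
with each $P_n$ projective. Because $\mathcal{N} \cap \kappa$ is transitive and $R$ is $<\kappa$-Noetherian, parts \eqref{item_ExactRestrictAndQuot} and \eqref{item_ProjComplexRestrictProj} of Lemma \ref{lem_ExactRestrctNoProjective} apply to $C_\bullet$ and $\mathcal{N}$: the complex $C_\bullet \restriction \mathcal{N}$ is an exact complex of projective modules, augmented in the $M$-slot by $\langle \mathcal{N} \cap M \rangle$. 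Since $\langle \mathcal{N} \cap P_n \rangle$ is generated by the set $\mathcal{N} \cap P_n$ of cardinality at most $|\mathcal{N}|$, this is a projective resolution of $\langle \mathcal{N} \cap M \rangle$ by $|\mathcal{N}|$-generated modules, establishing strong $|\mathcal{N}|$-presentation.

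Second, the same argument applied to $\mathcal{N}'$ (noting $C_\bullet \in \mathcal{N} \subseteq \mathcal{N}'$, $M,R \in \mathcal{N}'$, and $\mathcal{N}' \cap \kappa$ transitive) yields that $C_\bullet \restriction \mathcal{N}'$ is an exact complex of projectives augmented by $\langle \mathcal{N}' \cap M \rangle$. The inclusion $\mathcal{N} \subseteq \mathcal{N}'$ makes $C_\bullet \restriction \mathcal{N}$ a subcomplex of $C_\bullet \restriction \mathcal{N}'$, producing a short exact sequence of complexes
\[
0 \to C_\bullet \restriction \mathcal{N} \to C_\bullet \restriction \mathcal{N}' \to Q_\bullet \to 0
\]
with $Q_\bullet$ the termwise quotient. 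Since the left two complexes are exact, the 3-by-3 lemma for complexes (exactly the tool cited in the proof of Lemma \ref{lem_ExactRestrctNoProjective}\eqref{item_ExactRestrictAndQuot}) forces $Q_\bullet$ to be exact. In each non-augmentation degree $n$, the term of $Q_\bullet$ is $\langle \mathcal{N}' \cap P_n \rangle / \langle \mathcal{N} \cap P_n \rangle$, which is projective by Lemma \ref{lem_Kaplansky} and generated by at most $|\mathcal{N}'|$ elements; the augmentation term is $\langle \mathcal{N}' \cap M \rangle / \langle \mathcal{N} \cap M \rangle$. Hence $Q_\bullet$ exhibits a projective resolution of the desired quotient by $|\mathcal{N}'|$-generated modules, giving strong $|\mathcal{N}'|$-presentation.

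The only subtlety to watch, and the place where the hypotheses do real work, is the application of Lemma \ref{lem_ExactRestrctNoProjective} to $C_\bullet$ at the augmentation slot: keeping the resolution augmented and padded by zeros so that it is genuinely a $\mathbb{Z}$-indexed exact complex is what lets exactness at $M$ (which encodes surjectivity of $P_0 \to M$) pass down to surjectivity of $\langle \mathcal{N} \cap P_0 \rangle \to \langle \mathcal{N} \cap M \rangle$ and $\langle \mathcal{N}' \cap P_0 \rangle \to \langle \mathcal{N}' \cap M \rangle$. Everything else is formal: the Noetherian hypothesis is hidden inside Lemma \ref{lem_ExactRestrctNoProjective}, and the Kaplansky-style quotient result of Lemma \ref{lem_Kaplansky} takes care of projectivity of the terms of $Q_\bullet$.
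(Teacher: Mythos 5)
Your proof is correct and follows essentially the same route as the paper's: both restrict a projective resolution $P_\bullet \in \mathcal{N}$ of $M$ to $\mathcal{N}$ and $\mathcal{N}'$, use Lemma \ref{lem_ExactRestrctNoProjective} for exactness of the restrictions, the 3-by-3 lemma for exactness of the quotient complex, and Lemma \ref{lem_Kaplansky} for projectivity of the terms $\langle \mathcal{N} \cap P_n \rangle$ and $\langle \mathcal{N}' \cap P_n \rangle / \langle \mathcal{N} \cap P_n \rangle$. The only cosmetic difference is your explicit care with the augmentation slot and zero-padding, which the paper leaves implicit.
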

\begin{proof}
By elementarity of $\mathcal{N}$ in $\mathcal{H}_\theta$, there is a projective resolution
\[
\xymatrix{
P_\bullet: \ \dots \ar[r] & P_2 \ar[r]^-{f_2} & P_1 \ar[r]^-{f_1} & P_0 \ar[r]^-{f_0} & M \ar[r] & 0
}
\]
of $M$ such that $P_\bullet \in \mathcal{N}$.  By Lemma \ref{lem_ExactRestrctNoProjective}, 
$P_\bullet \restriction \mathcal{N}$ and $P_\bullet \restriction \mathcal{N}'$ are exact.  Then the 3-by-3 lemma for complexes applied to the exact sequence
\[
\xymatrix{
0_\bullet \ar[r] & P_\bullet \restriction \mathcal{N} \ar[r]^-{\text{id}_\bullet} & P_\bullet \restriction \mathcal{N}' \ar[r] & (P_\bullet \restriction \mathcal{N}')/(P_\bullet \restriction \mathcal{N}) \ar[r] & 0_\bullet
}
\]
yields that $(P_\bullet \restriction \mathcal{N}')/(P_\bullet \restriction \mathcal{N})$ is also exact.  Lemma \ref{lem_Kaplansky} ensures that for each $n \ge 0$, $\langle \mathcal{N} \cap P_n \rangle$ and $\frac{\langle \mathcal{N}' \cap P_n \rangle}{\langle \mathcal{N} \cap P_n \rangle}$ are projective.  So $P_\bullet \restriction \mathcal{N}$ is a projective resolution of $\langle \mathcal{N} \cap M \rangle$, and $(P_\bullet \restriction \mathcal{N}')/(P_\bullet \restriction \mathcal{N})$ is a projective resolution of $\langle \mathcal{N}' \cap M \rangle / \langle \mathcal{N} \cap M \rangle$.  Terms in $P_\bullet \restriction \mathcal{N}$ are obviously $|\mathcal{N}|$-generated, and terms in $(P_\bullet \restriction \mathcal{N}')/(P_\bullet \restriction \mathcal{N})$ are obviously $|\mathcal{N}'|$-generated.
 
\end{proof}

\section{Proof of Theorem \ref{thm_CharacterizeDecon} (characterization of deconstructibility)}\label{sec_ProofCharDecon}

Given a collection $\mathcal{C}$ of modules over a fixed ring $R$, a \textbf{$\mathcal{C}$-filtration} is a $\subseteq$-increasing and $\subseteq$-continuous sequence $\langle M_\xi \ : \ \xi < \eta \rangle$ of $R$-modules such that $M_0 = 0$ and for all $\xi < \eta$ such that $\xi +1 < \eta$:  $M_\xi$ is a submodule of $M_{\xi+1}$ and $M_{\xi+1}/M_\xi$ is isomorphic to an element of $\mathcal{C}$.\footnote{Of course if $\mathcal{C}$ is closed under isomorphism we could just say ``in" $\mathcal{C}$, but it will be often convenient to view $\mathcal{C}$ as a set, rather than a proper class.}  A module $M$ is called \textbf{$\boldsymbol{\mathcal{C}}$-filtered} if it has a $\mathcal{C}$-filtration; i.e.\ if there exists a $\mathcal{C}$-filtration $\vec{M}$ whose union is $M$.  The class of $\mathcal{C}$-filtered modules is denoted $\text{Filt}(\mathcal{C})$.

Given a class $\mathcal{F}$ of modules and a regular cardinal $\kappa$, let $\mathcal{F}^{<\kappa}$ denote the collection of $<\kappa$-presented members of $\mathcal{F}$.  We say that \textbf{$\boldsymbol{\mathcal{F}}$ is $\boldsymbol{<\kappa}$-deconstructible} (in the sense of G\"obel-Trlifaj~\cite{MR2985554}) if every member of $\mathcal{F}$ is $\mathcal{F}^{<\kappa}$-filtered.  ``$\mathcal{F}$ is deconstructible" means there exists a $\kappa$ such that $\mathcal{F}$ is $<\kappa$-deconstructible. $\mathcal{F}$ is \textbf{closed under transfinite extensions} if $\text{Filt}(\mathcal{F}) \subseteq \mathcal{F}$.  The key fact about deconstructibility is:
\begin{theorem}[Saor\'{\i}n-\v{S}\v tov\'{\i}\v{c}ek~\cite{MR2822215}]\label{thm_StovDecon}
If $\mathcal{F}$ is deconstructible and closed under transfinite extensions, then it is a precovering class.
\end{theorem}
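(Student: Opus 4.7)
The plan is to use deconstructibility to reduce to a set of small generators, then invoke the Eklof--Trlifaj small object argument to build $\mathcal{F}$-precovers. First, I would choose $\kappa$ witnessing $<\kappa$-deconstructibility of $\mathcal{F}$ and let $\mathcal{S}$ be a representative set of isomorphism classes in $\mathcal{F}^{<\kappa}$. Deconstructibility gives $\mathcal{F}\subseteq \text{Filt}(\mathcal{S})$, and closure under transfinite extensions yields the reverse inclusion, so $\mathcal{F} = \text{Filt}(\mathcal{S})$. Eklof's lemma then yields $\mathcal{F}^\perp = \text{Filt}(\mathcal{S})^\perp = \mathcal{S}^\perp$, where for a class $\mathcal{C}$ I write $\mathcal{C}^\perp := \{X : \text{Ext}^1_R(C,X) = 0 \text{ for all } C \in \mathcal{C}\}$.

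Next, for a given module $M$, I would construct by transfinite recursion a short exact sequence $0 \to A \to F \to M \to 0$ with $F \in \mathcal{F}$ and $A \in \mathcal{F}^\perp$. At each successor stage, extend the developing kernel $K_\alpha$ by pushing out along representatives of $\text{Ext}^1_R(S,K_\alpha)$ for each $S \in \mathcal{S}$, simultaneously enlarging $F_\alpha$ so that the quotient $F_{\alpha+1}/F_\alpha$ lies in $\mathcal{S}$; at limit stages, take direct unions. After a sufficiently long recursion, whose length depends on cardinality invariants of $\mathcal{S}$, $R$, and $M$, the kernel lands in $\mathcal{S}^\perp$, while $F$ carries an $\mathcal{S}$-filtration and hence lies in $\mathcal{F}$ by closure under transfinite extensions.

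The precover property then follows by a routine Ext-vanishing argument: given any $g: F' \to M$ with $F' \in \mathcal{F}$, pulling back the surjection $F \to M$ along $g$ yields $0 \to A \to E \to F' \to 0$, which splits because $\text{Ext}^1_R(F',A) = 0$, producing the desired factorization of $g$ through $F \to M$. The main obstacle is ensuring simultaneously that $F \in \mathcal{F}$ \emph{and} $A \in \mathcal{F}^\perp$: the naive Eklof--Trlifaj output only places $F$ in $\text{Filt}(\mathcal{S} \cup \{R\}) = {}^\perp(\mathcal{S}^\perp)$, which coincides with $\mathcal{F}$ only when the free modules already belong to $\mathcal{F}$. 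Absent such a generator in $\mathcal{F}$, the construction must be refined so that the ``free starter'' needed to force surjectivity is absorbed into the $\mathcal{S}$-filtration, and this is precisely where closure of $\mathcal{F}$ under transfinite extensions (together with working in an efficient exact category) becomes indispensable.
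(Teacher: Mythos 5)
The paper does not actually prove this theorem---it cites Saor\'{\i}n--\v{S}\v{t}ov\'{\i}\v{c}ek and G\"obel--Trlifaj---so I am judging your proposal on its own terms. Your first paragraph is correct and is exactly where both hypotheses enter: $<\kappa$-deconstructibility gives $\mathcal{F}\subseteq\text{Filt}(\mathcal{S})$ for a representative set $\mathcal{S}$ of $\mathcal{F}^{<\kappa}$, closure under transfinite extensions gives the reverse inclusion, and the problem reduces to showing that $\text{Filt}(\mathcal{S})$ is precovering for an arbitrary set $\mathcal{S}$.

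The remainder has a genuine gap, and it is the one you flag yourself at the end; it is not a refinement issue but a fatal obstruction to the whole strategy. You are trying to produce a \emph{special} precover, i.e., a short exact sequence $0\to A\to F\to M\to 0$ with $F\in\mathcal{F}$ and $A\in\mathcal{F}^\perp$. But an $\mathcal{F}$-precover of $M$ need not be an epimorphism when no member of $\mathcal{F}$ maps onto $M$: the class of torsion abelian groups is $<\aleph_1$-deconstructible and closed under transfinite extensions, yet the only torsion precover of $\mathbb{Z}$ is the zero map. So there is no way to ``absorb the free starter into the $\mathcal{S}$-filtration''; the Eklof--Trlifaj construction inherently lands in ${}^\perp(\mathcal{S}^\perp)$ (direct summands of $\text{Filt}(\mathcal{S}\cup\{R\})$-modules), which is generally strictly larger than $\mathcal{F}$, and your closing Ext-vanishing factorization also presupposes that $F\to M$ is surjective, since otherwise the pullback does not yield a short exact sequence ending in $F'$. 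The actual Saor\'{\i}n--\v{S}\v{t}ov\'{\i}\v{c}ek argument avoids Ext-orthogonality entirely: using the Hill Lemma applied to an $\mathcal{S}$-filtration of $X$, one shows that every homomorphism $X\to M$ with $X\in\text{Filt}(\mathcal{S})$ factors through an $\mathcal{S}$-filtered module of cardinality at most some $\mu$ depending only on $|M|$, $|R|$, and $\kappa$; taking a representative set $\mathcal{T}$ of such $\le\mu$-sized members of $\text{Filt}(\mathcal{S})$ and using that $\text{Filt}(\mathcal{S})$ is closed under direct sums, the canonical map $\bigoplus_{T\in\mathcal{T}}T^{(\text{Hom}_R(T,M))}\to M$ is a $\text{Filt}(\mathcal{S})$-precover. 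Your cotorsion-pair route would additionally require $R\in\mathcal{F}$ and $\mathcal{F}={}^\perp(\mathcal{F}^\perp)$, neither of which is part of the hypothesis.
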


We introduce a couple of ad-hoc definitions:  $\mathcal{F}$ is \textbf{strongly $\boldsymbol{<\kappa}$-deconstructible} if every member of $\mathcal{F}$ is $\mathcal{F}^{<\kappa}_s$-filtered, where $\mathcal{F}^{<\kappa}_s$ denotes the collection of strongly $<\kappa$-presented members of $\mathcal{F}$---those members of $\mathcal{F}$ that have a projective resolution consisting entirely of $<\kappa$-generated modules.  Similarly, let $\mathcal{F}^{<\kappa}_{\text{g}}$ denote the collection of $<\kappa$-generated (but not necessarily $<\kappa$-presented) members of $\mathcal{F}$, and let us say that $\mathcal{F}$ is \textbf{weakly $\boldsymbol{<\kappa}$-deconstructible} if every member of $\mathcal{F}$ is $\mathcal{F}^{<\kappa}_{\text{g}}$-filtered.  Note that if $\mathcal{F}$ is weakly $<\kappa$-deconstructible, then by picking $\lambda$ large enough that every $F \in \mathcal{F}_{\text{g}}^{<\kappa}$ is strongly $<\lambda$-presented,  it follows that $\mathcal{F}$ is strongly $<\lambda$-deconstructible.  So ``$\mathcal{F}$ is deconstructible" could have equivalently been defined as the existence of some $\kappa$ such that $\mathcal{F}$ is weakly $<\kappa$-deconstructible, or the existence of some $\kappa$ such that $\mathcal{F}$ is strongly $<\kappa$-deconstructible.  The distinction between the various notions is only relevant if one is interested in what happens at a particular $\kappa$ (typically $\kappa = \aleph_1$).

\begin{remark}\label{rem_AlternativeCharDecon}
Let \ref{item_KappaDecon}$_{,w}$ be the result of replacing ``$<\kappa$-deconstructible" with ``weakly $<\kappa$-deconstructible" in the statement of part \ref{item_KappaDecon} of Theorem \ref{thm_CharacterizeDecon}.  Then, even if we omit the $<\kappa$-Noetherian assumption on $R$ in the background assumptions of Theorem \ref{thm_CharacterizeDecon}, a minor variant of the proof below still shows that \ref{item_ElemSub} implies \ref{item_KappaDecon}$_{,w}$; see footnote \ref{fn_WeakVersion} on page \pageref{fn_WeakVersion}.  Then the implication \ref{item_ElemSub} $\implies$ \ref{item_KappaDecon}$_{,w}$, combined with Lemma \ref{lem_Kaplansky}, yields an alternative proof of the classic Kaplansky's Theorem (using $\kappa = \aleph_1$).
\end{remark}

\begin{remark}\label{rem_TFextNotNeeded}
For the \ref{item_KappaDecon} $\implies$ \ref{item_ElemSub} direction of Theorem \ref{thm_CharacterizeDecon} (under the assumption that $\mathcal{F}$ is closed under transfinite extensions), it suffices to assume $\mathcal{F}$ is \emph{weakly} $<\kappa$-deconstructible. 
\end{remark}

The \ref{item_ElemSub_PAIRS} $\implies$ \ref{item_ElemSub} direction of Theorem \ref{thm_CharacterizeDecon} is trivial, by considering $\mathcal{N}':= H_\theta$ (in which case $\langle \mathcal{N}' \cap M \rangle$ is just $M$).  So we only need to prove:
\begin{itemize}
 \item  \ref{item_ElemSub} $\implies$ \ref{item_KappaDecon}; and
 \item \ref{item_KappaDecon} $\implies$ \ref{item_ElemSub_PAIRS} (assuming that $\mathcal{F}$ is also closed under transfinite extensions).
\end{itemize}

\subsection{Proof of \ref{item_ElemSub} $\implies$ \ref{item_KappaDecon} direction of Theorem \ref{thm_CharacterizeDecon}}

Assume that \ref{item_ElemSub} holds in the statement of Theorem \ref{thm_CharacterizeDecon}.  We prove that \ref{item_KappaDecon} holds by induction; more precisely, we prove by induction on cardinals $\lambda$ that if $M \in \mathcal{F}$ is strongly $ \lambda$-presented, then $M$ is strongly $\mathcal{F}^{<\kappa}$-filtered.

For $\lambda < \kappa$ the desired statement trivially holds; i.e.\ if $M \in \mathcal{F}$ and is strongly $\lambda$-presented for some $\lambda < \kappa$, then $M_0  =0$ and $M_1 = M$ is the desired $\mathcal{F}^{<\kappa}_s$ filtration of $M$.

Now suppose $\lambda \ge \kappa$ (possibly $\lambda$ is singular) and that:
\begin{quote}
$\textbf{IH}_{\boldsymbol{<\lambda}}$: For all $\mu < \lambda$, all strongly $\mu$-presented members of $\mathcal{F}$ are $\mathcal{F}^{<\kappa}_s$-filtered.  
\end{quote}

Suppose $M \in \mathcal{F}$ and that $M$ is strongly $\lambda$-presented; in particular there is an $X \subseteq M$ of size at most $\lambda$ such that $M = \langle X \rangle$.  We can without loss of generality assume $X \subseteq \lambda$.   Fix any regular $\theta$ such that $R$, $M$, and $\lambda$ are elements of $H_\theta$.  Set
\[
\mathfrak{B}:=(H_\theta,\in,\kappa,R,M,X,\lambda, \mathcal{F} \cap H_\theta).
\]
By Fact \ref{fact_SmoothChain}, there exists a smooth $\in$-chain $\vec{\mathcal{N}}=\left \langle \mathcal{N}_\xi \ : \ \xi <  \text{cf}(\lambda)  \right\rangle$ of elementary submodels of $\mathfrak{B}$ such that each $\mathcal{N}_\xi$ has cardinality $<\lambda$, has transitive intersection with $\kappa$, and such that 
\[
X \subseteq \lambda \subseteq \bigcup_{\xi < \text{cf}(\lambda)} \mathcal{N}_\xi.
\]

In order to simplify notation when using $\vec{\mathcal{N}}$ to create an $\mathcal{F}$-filtration of $M$, we will insist that $\mathcal{N}_0$ is not an elementary submodel of $\mathfrak{B}$, but merely $\mathcal{N}_0 = \{ 0_M \}$.  This makes no substantial difference, but simplifies notation later on.  Note that since $\mathcal{N}_\xi$, $M$, and $R$ are elements of $\mathcal{N}_{\xi+1}$, both $\langle \mathcal{N}_\xi \cap M \rangle$ and $M/\langle N_\xi \cap M \rangle$ are elements of $\mathcal{N}_{\xi+1}$.  Then, by Lemma \ref{lem_IntersectQuotient},

\begin{equation}\label{eq_SmoothChainQuot}
\left\langle \mathcal{N}_{\xi+1} \cap \frac{M}{\langle \mathcal{N}_\xi \cap M \rangle} \right\rangle \simeq \frac{\langle \mathcal{N}_{\xi+1} \cap M \rangle }{\langle \mathcal{N}_{\xi+1} \cap M \rangle \cap \langle \mathcal{N}_\xi \cap M \rangle. }
\end{equation}
But since $N_\xi$ is also a subset of $N_{\xi+1}$, the denominator of the right side of \eqref{eq_SmoothChainQuot} is equal to $\langle \mathcal{N}_\xi \cap M \rangle$.  Hence,
\begin{equation}\label{eq_SmoothChainQuot_2}
\left\langle \mathcal{N}_{\xi+1} \cap \frac{M}{\langle \mathcal{N}_\xi \cap M \rangle} \right\rangle \simeq \frac{\langle \mathcal{N}_{\xi+1} \cap M \rangle }{ \langle \mathcal{N}_\xi \cap M \rangle.
}
\end{equation}

Now $M \in \mathcal{F}$ and for each $\xi$, $M \in \mathcal{N}_\xi \prec \mathfrak{B}$; so $\frac{M}{\langle \mathcal{N}_\xi \cap M \rangle}$ is an element of $\mathcal{F}$ by assumption \ref{item_ElemSub}.  It is also an element of $\mathcal{N}_{\xi+1}$, as noted above.  Then again by assumption \ref{item_ElemSub}---this time applied to $\mathcal{N}_{\xi+1}$ and its element $\frac{M}{\langle \mathcal{N}_\xi \cap M \rangle} \in \mathcal{F}$---it follows that the left side of \eqref{eq_SmoothChainQuot_2} is an element of $\mathcal{F}$.  Hence,  
\begin{equation}\label{eq_NotQuiteOurFilt}
\Big \langle \langle \mathcal{N}_\xi \cap M  \rangle \ : \ \xi < \text{cf}(\lambda)  \Big\rangle
\end{equation}
is an $\mathcal{F}$-filtration of $M$,\footnote{Note that by our insistence that $\mathcal{N}_0 := \{ 0 \}$, the 0-th entry of \eqref{eq_NotQuiteOurFilt} is just $\{ 0 \}$ and entry 1 is $\langle \mathcal{N}_1 \cap M \rangle$, so their quotient is isomorphic to $\langle \mathcal{N}_1 \cap M \rangle$ which is in $\mathcal{F}$ by assumption. } though some of the adjacent factors may be too large; i.e.\ it may fail to be a $\mathcal{F}^{<\kappa}_s$-filtration.

Consider any fixed $\xi < \text{cf}(\lambda)$.  By Lemma \ref{lem_StrongGen}, our assumption that $R$ is $<\kappa$-Noetherian, and the fact that $\mathcal{N}_\xi$ and $\mathcal{N}_{\xi+1}$ both have transitive intersection with $\kappa$, the quotient in \eqref{eq_SmoothChainQuot_2} is strongly $|\mathcal{N}_{\xi+1}|$-presented.\footnote{Even if we had omitted the assumption that $R$ is $<\kappa$-Noetherian, the quotient on the right side of \eqref{eq_SmoothChainQuot_2} is obviously $ |\mathcal{N}_{\xi+1}|$-generated (though possibly not $ |\mathcal{N}_{\xi+1}|$-presented).  This, together with the obvious adjustment to the induction hypothesis, is the only difference between the current proof and the proof of the implication \ref{item_ElemSub} $\implies$  \ref{item_KappaDecon}$_{,w}$ mentioned in Remark \ref{rem_AlternativeCharDecon}. \label{fn_WeakVersion} }  Since the quotient in \eqref{eq_SmoothChainQuot_2} also belongs to $\mathcal{F}$ and $|\mathcal{N}_{\xi+1}|<\lambda$, our induction hypothesis $\text{IH}_{<\lambda}$ applies to \eqref{eq_SmoothChainQuot_2}, yielding some filtration
\[
\left\langle Z^\xi_\zeta / \langle N_\xi \cap M \rangle \ : \ \zeta < \eta^\xi  \right\rangle
\]
of the right side of \eqref{eq_SmoothChainQuot_2} such that 
\[
\frac{Z^\xi_{\zeta+1} / \langle N_\xi \cap M \rangle}{Z^\xi_\zeta / \langle N_\xi \cap M \rangle} \ \simeq \ \frac{Z^\xi_{\zeta+1}}{Z^\xi_\zeta}
\]
is in $\mathcal{F}^{<\kappa}_s$ for each $\zeta$ such that $\zeta + 1 < \eta^\xi$.  For each $\xi < \text{cf}(\lambda)$ let
\[
\vec{Z}^\xi:= \langle Z^\xi_\zeta \ : \ \zeta < \eta^\xi \rangle.
\]
Then concatenating the $\vec{Z}^\xi$'s across all $\xi < \text{cf}(\lambda)$ yields the desired $\mathcal{F}^{<\kappa}_s$-filtration of $M=\bigcup_{\xi < \text{cf}(\lambda)} \langle \mathcal{N}_\xi \cap M \rangle$.
 
This completes the proof of the \ref{item_ElemSub} $\implies$ \ref{item_KappaDecon} direction of Theorem \ref{thm_CharacterizeDecon}.

\subsection{Proof of the \ref{item_KappaDecon} $\implies$ \ref{item_ElemSub_PAIRS} direction of Theorem \ref{thm_CharacterizeDecon}}

Suppose $\kappa$ is regular and uncountable, and that $\mathcal{F}$ is a weakly $<\kappa$-deconstructible class that is closed under transfinite extensions.  Let $\mathcal{F}_\theta:= \mathcal{F} \cap H_\theta$ (viewed as a predicate on $H_\theta$) and consider any
\begin{equation}\label{eq_N_elem_again}
\mathcal{N} \prec \mathcal{N}' \prec (H_\theta,\in,\kappa,R,\mathcal{F}_\theta)
\end{equation}
such that $\mathcal{N} \cap \kappa$ and $\mathcal{N}' \cap \kappa$ are both transitive.  Suppose $M \in \mathcal{F} \cap \mathcal{N}$.  By the assumption that $\mathcal{F}$ is weakly $<\kappa$-deconstructible, there exists a $\mathcal{F}^{<\kappa}_{\text{g}}$-filtration of $M$.  This can be expressed in the structure $(H_\theta,\in,\kappa,R,\mathcal{F}_\theta)$, and so by \eqref{eq_N_elem_again}, there is a $\mathcal{F}^{<\kappa}_{\text{g}}$-filtration 
\begin{equation}
\vec{M} = \langle M_\xi \ : \ \xi < \eta \rangle 
\end{equation}
of $M$ that is an \emph{element} of $\mathcal{N}$.

Elementarity of $\mathcal{N}$, together with the fact that $\vec{M}$ is a filtration of $M$, ensures that 
\begin{equation}
\Big\langle \langle \mathcal{N} \cap M_\xi \rangle \ : \ \xi \in \mathcal{N} \cap \eta \Big\rangle
\end{equation}
is a filtration of $\langle \mathcal{N} \cap M \rangle$.\footnote{To see continuity, note that if $0 \ne \xi \in \mathcal{N} \cap \eta$ and there is no largest element of $\mathcal{N} \cap \xi$, then by elementarity of $\mathcal{N}$, $\xi$ must be a limit ordinal.  Hence, by continuity of $\vec{M}$, $M_\xi = \bigcup_{\zeta < \xi} M_\zeta$.  It follows by elementarity of $\mathcal{N}$ that $\mathcal{N} \cap M_\xi =  \bigcup_{\zeta \in \mathcal{N} \cap \xi} (\mathcal{N} \cap M_\zeta)$.  Finally, since $\mathcal{N} \cap \xi$ has no largest element, this last equality implies $\langle \mathcal{N} \cap M_\xi \rangle =  \bigcup_{\zeta \in \mathcal{N} \cap \xi} \langle \mathcal{N} \cap M_\zeta \rangle$. }  We claim that adjacent quotients from it are in $\mathcal{F}$; it will then follow from closure of $\mathcal{F}$ under transfinite extensions that $\langle \mathcal{N} \cap M \rangle$ is in $\mathcal{F}$.  Indeed, suppose $\xi \in \mathcal{N} \cap \eta$ and $\xi+1 < \eta$; then $\xi+1 \in \mathcal{N}$ too, so both $M_\xi$ and $M_{\xi+1}$ are elements of $\mathcal{N}$.  Since $\mathcal{N} \cap \kappa$ is transitive and $M_{\xi+1} / M_\xi$ is $<\kappa$-generated and an element of $\mathcal{N}$, Fact \ref{fact_BasicElemSub} implies that there is a generating set for $M_{\xi+1}/M_\xi$ that is both an element and a subset of $\mathcal{N}$; in particular,
\[
\left\langle \mathcal{N} \cap \frac{M_{\xi+1}}{M_\xi} \right\rangle = \frac{M_{\xi+1}}{M_\xi}.
\]
And, since $R$ is $<\kappa$-Noetherian and $\mathcal{N} \cap \kappa$ is transitive, Lemma \ref{lem_IntersectQuotient} ensures
\[
\left\langle \mathcal{N} \cap \frac{M_{\xi+1}}{M_\xi} \right\rangle \ \simeq \ \frac{\langle \mathcal{N} \cap M_{\xi+1} \rangle}{\langle \mathcal{N} \cap M_{\xi} \rangle}
\]
So
\[
\frac{\langle \mathcal{N} \cap M_{\xi+1} \rangle}{\langle \mathcal{N} \cap M_{\xi} \rangle} \ \simeq \ \frac{M_{\xi+1}}{M_\xi}
\]
which is a member of $\mathcal{F}$.  This completes the proof that $\langle \mathcal{N} \cap M \rangle$ is $\mathcal{F}$-filtered (in fact, $\mathcal{F}^{<\kappa}_{\text{g}}$-filtered) and hence, by the assumed closure of $\mathcal{F}$ under transfinite extensions, an element of $\mathcal{F}$.

Next we will show that $\langle \mathcal{N}' \cap M \rangle/\langle \mathcal{N} \cap M \rangle$ is $\mathcal{F}$-filtered (in fact, $\mathcal{F}^{<\kappa}_{\text{g}}$-filtered); since $\mathcal{F}$ is assumed to be closed under transfinite extensions, this will complete the proof.  Since $\vec{M} \in \mathcal{N}'$, the sequence

\begin{equation}\label{eq_FiltOfQuotient100}
\left\langle \frac{\big\langle (\mathcal{N} \cap M) \cup (\mathcal{N}' \cap M_\xi) \big\rangle }{\langle \mathcal{N} \cap M \rangle} \ : \  \xi \in \mathcal{N}' \cap \eta \right\rangle
\end{equation}
is a filtration of $\langle \mathcal{N}' \cap M \rangle/\langle \mathcal{N} \cap M \rangle$.  We will show that its adjacent quotients are in $\mathcal{F}^{<\kappa}_{\text{g}}$; i.e.:
\begin{nonGlobalClaim}\label{clm_QuotOfUnions}
If $\xi \in \mathcal{N}' \cap \eta$, then
\begin{equation}\label{eq_QuotientToCheck}
\frac{\big\langle (\mathcal{N} \cap M ) \cup (\mathcal{N}' \cap M_{\xi+1}) \big\rangle}{\big\langle (\mathcal{N} \cap M ) \cup (\mathcal{N}' \cap M_{\xi}) \big\rangle}
\end{equation}
is an element of $\mathcal{F}^{<\kappa}_{\text{g}}$.  
\end{nonGlobalClaim}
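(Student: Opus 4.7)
The plan is to reduce $A/B$ to either $0$ or $M_{\xi+1}/M_\xi$, both of which are in $\mathcal{F}^{<\kappa}_{\text{g}}$. The main tool is the Noetherian intersection identity from Theorem \ref{thm_CharactNoeth}, applied in tandem with the ``level'' structure of $\vec{M}$.

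First I would compute the quotient. Writing $A = \langle \mathcal{N}' \cap M_{\xi+1}\rangle + \langle \mathcal{N} \cap M\rangle$ and similarly for $B$, the natural surjection $\langle \mathcal{N}' \cap M_{\xi+1}\rangle \twoheadrightarrow A/B$ has kernel $\langle \mathcal{N}' \cap M_{\xi+1}\rangle \cap B$. By the modular law and Theorem \ref{thm_CharactNoeth} applied to $\mathcal{N}'$ (which gives $\langle \mathcal{N}' \cap M_{\xi+1}\rangle \cap \langle \mathcal{N} \cap M\rangle = M_{\xi+1} \cap \langle \mathcal{N} \cap M\rangle$), this kernel equals $\langle \mathcal{N}' \cap M_\xi\rangle + K$, where $K := \langle \mathcal{N} \cap M\rangle \cap M_{\xi+1}$. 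Combined with the isomorphism $\langle \mathcal{N}' \cap M_{\xi+1}\rangle/\langle \mathcal{N}' \cap M_\xi\rangle \cong M_{\xi+1}/M_\xi$ from Lemma \ref{lem_IntersectQuotient}, this yields
\[
A/B \;\cong\; M_{\xi+1}\big/\big(M_\xi + K\big).
\]
So the task reduces to understanding how $K$ sits with respect to $M_\xi$.

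Next I would split cases on whether $\xi \in \mathcal{N}$. If $\xi \in \mathcal{N}$, then $M_\xi, M_{\xi+1} \in \mathcal{N}$, and since $M_{\xi+1}/M_\xi$ is $<\kappa$-generated with $\mathcal{N} \cap \kappa$ transitive, Fact \ref{fact_BasicElemSub} gives $M_{\xi+1} = \langle \mathcal{N} \cap M_{\xi+1}\rangle + M_\xi$. For any $y \in \mathcal{N}' \cap M_{\xi+1}$, write $y = n + m$ with $n \in \langle \mathcal{N} \cap M_{\xi+1}\rangle$ and $m \in M_\xi$; then $m = y - n \in \langle \mathcal{N}' \cap M\rangle \cap M_\xi = \langle \mathcal{N}' \cap M_\xi\rangle$ by Theorem \ref{thm_CharactNoeth} applied to $\mathcal{N}'$, so $y \in B$. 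Thus $A = B$ and $A/B = 0$.

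The main case is $\xi \notin \mathcal{N}$, where I aim to show $K \subseteq M_\xi$, so that $A/B \cong M_{\xi+1}/M_\xi \in \mathcal{F}^{<\kappa}_{\text{g}}$. The central device is a \emph{level function}: for each nonzero $n \in \mathcal{N} \cap M$, continuity of $\vec{M}$ yields a unique $\delta(n) < \eta$ with $n \in M_{\delta(n)+1} \setminus M_{\delta(n)}$, and elementarity of $\mathcal{N}$ (applied to $\vec{M}$ and $n$) forces $\delta(n) \in \mathcal{N}$; in particular $\delta(n) \neq \xi$. If $\mathcal{N} \cap \eta$ has elements above $\xi$, set $\eta^* := \min(\mathcal{N} \cap \eta \setminus (\xi+1))$; since $\xi, \xi+1 \notin \mathcal{N}$, we get $\eta^* \geq \xi + 2$, so $M_{\eta^*} \supseteq M_{\xi+1}$ and $M_{\eta^*} \in \mathcal{N}$. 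Theorem \ref{thm_CharactNoeth} then gives $K \subseteq \langle \mathcal{N} \cap M\rangle \cap M_{\eta^*} = \langle \mathcal{N} \cap M_{\eta^*}\rangle$, and minimality of $\eta^*$ forces $\delta(n) \leq \xi$ (hence $< \xi$) for every $n \in \mathcal{N} \cap M_{\eta^*}$, so $n \in M_\xi$. Otherwise $\mathcal{N} \cap \eta \subseteq \xi$ already, and the level-function argument immediately gives $\langle \mathcal{N} \cap M\rangle \subseteq M_\xi$. Either way, $K \subseteq M_\xi$.

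The main obstacle is precisely the subcase $\xi \notin \mathcal{N}$: Theorem \ref{thm_CharactNoeth} requires the ambient submodule to lie in $\mathcal{N}$, and $M_{\xi+1}$ does not. The workaround is to replace $M_{\xi+1}$ by the slightly larger $M_{\eta^*} \in \mathcal{N}$, after which minimality of $\eta^*$ in $\mathcal{N} \cap \eta$ plays the decisive role in locating all relevant generators inside $M_\xi$.
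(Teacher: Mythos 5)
Your proof is correct and follows essentially the same route as the paper's: the same case split on whether $\xi \in \mathcal{N}$, the same use of a $<\kappa$-sized generating set pulled inside $\mathcal{N}$ in Case 1, and in Case 2 the same key containment $\langle \mathcal{N} \cap M \rangle \cap M_{\xi+1} \subseteq M_\xi$ obtained from the least element of $\mathcal{N}$ at or above $\xi$ together with Theorem \ref{thm_CharactNoeth}, yielding $M_{\xi+1}/M_\xi$ as the quotient. The only difference is presentational: you package the reduction via the modular law and the explicit submodule $K$, whereas the paper computes the kernel of the surjection $\Phi \colon \langle \mathcal{N}' \cap M_{\xi+1} \rangle \to A/B$ directly (using the limit ordinal $\rho$ in place of your $\eta^*$ and $\delta$).
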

\begin{proof}
(of Claim \ref{clm_QuotOfUnions}):  Note that since $\xi \in \mathcal{N}'$, the ordinal $\xi+1$ is also in $\mathcal{N}'$.  We consider cases.

\textbf{Case 1: $\boldsymbol{\xi \in \mathcal{N}}$.}  Then by elementarity of $\mathcal{N}$, $\xi+1$ is also in $\mathcal{N}$, and hence both $M_\xi$ and $M_{\xi+1}$ are elements of $\mathcal{N}$.  Since $M_{\xi+1}/M_\xi$ is $<\kappa$-generated, there is an $X_\xi \subset M_{\xi+1}$ of size $<\kappa$ such that $M_{\xi+1} = M_\xi + X_\xi$.  And by elementarity and our case, we can take this $X_\xi$ to be an element of $\mathcal{N}$.  Since $|X_\xi|<\kappa$ and $\mathcal{N}$ has transitive intersection with $\kappa$, it follows from Fact \ref{fact_BasicElemSub} that 
 \begin{equation}\label{eq_X_xi_in_N}
 X_\xi \subset \mathcal{N} \ (\cap M_{\xi+1}).
 \end{equation}
Note that $X_\xi$ is in $\mathcal{N}'$ too, because $\mathcal{N} \subset \mathcal{N}'$.  Now if $z$ is an element of $\mathcal{N}' \cap M_{\xi+1} = \mathcal{N}' \cap  (M_\xi + X_\xi)$, then by elementarity of $\mathcal{N}'$, $z = m + x$ for some $m \in \mathcal{N}' \cap M_\xi$ and some $x \in \mathcal{N}' \cap X_\xi$.  By \eqref{eq_X_xi_in_N}, $x$ is an element of $\mathcal{N}$.  Hence $z$ is an element of $\langle (\mathcal{N} \cap M) \cup (\mathcal{N}' \cap M_\xi) \rangle$.  So
 \[
\Big\langle  (\mathcal{N} \cap M) \cup (\mathcal{N}' \cap M_{\xi+1}) \Big\rangle = \Big\langle (\mathcal{N} \cap M) \cup \big( \mathcal{N}' \cap (M_\xi + X_\xi) \big) \Big\rangle = \Big\langle (\mathcal{N} \cap M) \cup (\mathcal{N}' \cap  M_\xi) \Big\rangle,
 \]
and hence the numerator is identical to the denominator in \eqref{eq_QuotientToCheck}.  And we can without loss of generality assume the trivial module is in $\mathcal{F}$.

\textbf{Case 2: $\boldsymbol{\xi \notin \mathcal{N}}$.}  We prove in this case that \eqref{eq_QuotientToCheck} is isomorphic to $M_{\xi+1}/M_\xi$, which is in $\mathcal{F}^{<\kappa}_{\text{g}}$ by assumption.    Define
\[
\Phi: \langle \mathcal{N}' \cap M_{\xi+1} \rangle \to \frac{\big\langle (\mathcal{N} \cap M ) \cup (\mathcal{N}' \cap  M_{\xi+1}) \big\rangle}{\big\langle (\mathcal{N} \cap M ) \cup (\mathcal{N}' \cap  M_{\xi}) \big\rangle} = \frac{\Big\langle \big( \mathcal{N} \setminus M_{\xi+1} \big) \cup (\mathcal{N}' \cap M_{\xi+1}) \Big\rangle}{\Big\langle \big( \mathcal{N} \setminus M_{\xi} \big) \cup (\mathcal{N}' \cap  M_{\xi}) \Big\rangle}
\] 
by mapping $x$ to its coset.  This is clearly a homomorphism, and is surjective because every coset in the first expression of the codomain of $\Phi$ has a representative in $\langle \mathcal{N}' \cap M_{\xi+1} \rangle$.

We will prove that $\text{ker} \ \Phi = \langle \mathcal{N}' \cap M_\xi \rangle$.  It is clear that $\text{ker} \ \Phi \supseteq \langle \mathcal{N}' \cap M_\xi \rangle$.  To prove the other inclusion, let $\rho$ be the least ordinal $\ge \xi$ that is an element of $\mathcal{N}$.  Since $\rho$ and $\vec{M}$ are both elements of $\mathcal{N}$, the elementarity of $\mathcal{N}$ ensures that $M_\rho \in \mathcal{N}$.\footnote{Note $\rho \le \eta$ because $\eta = \text{lh}(\vec{M}) \in \mathcal{N}$; if $\rho = \eta$ we set $M_\rho:= M$.}  Our case ensures that 
\begin{equation}\label{eq_XiPlus1LessRho}
\xi+1 \le \rho,
\end{equation} 
and in fact (by elementarity of $\mathcal{N}$) $\rho$ must be a limit ordinal.  So $M_\rho = \bigcup_{\zeta < \rho} M_\zeta$, and the elementarity of $\mathcal{N}$ ensures that any member of $\mathcal{N} \cap M_\rho$ is an element of $M_\zeta$ for some $\zeta \in \mathcal{N} \cap \xi$.  Hence,   
\begin{equation}
\mathcal{N} \cap M_\rho = \mathcal{N} \cap M_\xi.
\end{equation}

Since $R$ is $<\kappa$-Noetherian and $\mathcal{N} \cap \kappa$ is transitive, Theorem \ref{thm_CharactNoeth} implies
\begin{equation}
\langle \mathcal{N} \cap M \rangle \cap M_\rho = \langle \mathcal{N} \cap M_\rho \rangle.
\end{equation} 

Putting this all together, we have
\begin{equation}\label{eq_MainInclusion}
\langle \mathcal{N} \cap M \rangle \cap M_\rho \subseteq \langle \mathcal{N} \cap M_\xi \rangle \subseteq M_\xi.
\end{equation}

We are now ready to prove $\text{ker}( \Phi) \subseteq \langle \mathcal{N}' \cap M_\xi \rangle$.  Suppose $x \in \text{ker} ( \Phi)$.  Then---keeping in mind that the domain of $\Phi$ is $\langle \mathcal{N}' \cap M_{\xi+1}\rangle$---we have
\[
x \in \langle \mathcal{N}' \cap M_{\xi+1} \rangle \cap \  \Big( \langle \mathcal{N} \setminus M_\xi \rangle + \langle \mathcal{N}' \cap M_\xi \rangle \Big),
\]
So, in particular, $x = n + m$ for some $n \in \langle \mathcal{N} \setminus M_\xi \rangle$ and some $m \in \langle \mathcal{N}' \cap M_\xi \rangle$.  Now $x$ and $m$ are both in $\langle \mathcal{N}' \cap M_{\xi+1} \rangle$, so $n = x-m$ is also an element of $\langle \mathcal{N}' \cap M_{\xi+1} \rangle$.  Since $n$ is also in $\langle \mathcal{N} \rangle$, we have
\[
n \in \langle \mathcal{N} \rangle \cap M_{\xi+1}  \subseteq \langle \mathcal{N} \rangle \cap M_\rho   \subseteq M_\xi,
\]
 where the first inclusion is by \eqref{eq_XiPlus1LessRho} and the second inclusion is by \eqref{eq_MainInclusion}.  So $n \in M_\xi$.  Then $x = n+m$ is in $M_\xi$, and so
 \[
 x \in \langle \mathcal{N}' \cap M_{\xi+1} \rangle     \cap M_\xi = \Big( \langle \mathcal{N}'\rangle \cap M_{\xi+1}  \Big) \cap M_\xi = \langle \mathcal{N}' \rangle  \cap M_\xi = \langle \mathcal{N}' \cap M_\xi \rangle,
 \]
 where the first and last equalities are by Lemma \ref{lem_IntersectQuotient} (and the standing assumption of the claim that $\xi \in \mathcal{N}'$).  This completes the proof that $\text{ker}(\Phi) \subseteq \langle \mathcal{N}' \cap M_\xi\rangle$.
 
We have shown that $\Phi$ is surjective and its kernel is $\langle \mathcal{N}' \cap M_\xi \rangle$.  So the codomain of $\Phi$ is isomorphic to $
 \frac{\langle \mathcal{N}' \cap M_{\xi+1} \rangle}{\langle \mathcal{N}' \cap M_{\xi} \rangle}$.  By Lemma \ref{lem_IntersectQuotient}, this quotient is isomorphic to 
 \[
 \left\langle \mathcal{N}' \cap \frac{M_{\xi+1}}{M_\xi} \right\rangle,
\]
and this is just equal to $M_{\xi+1}/M_\xi$, because $M_{\xi+1}/M_\xi$ is a $<\kappa$-generated element of $\mathcal{N}'$ and $\mathcal{N}' \cap \kappa$ is transitive.
 
\end{proof}

So by the claim, \eqref{eq_FiltOfQuotient100} is an $\mathcal{F}$-filtration of the quotient $\frac{\langle \mathcal{N}' \cap M \rangle}{\langle \mathcal{N} \cap M \rangle}$.  By closure of $\mathcal{F}$ under transfinite extensions, it follows that $\frac{\langle \mathcal{N}' \cap M \rangle}{\langle \mathcal{N} \cap M \rangle}$ is in $\mathcal{F}$.

\subsection{Characterization of deconstructibility for classes of complexes}

We state a version of Theorem \ref{thm_CharacterizeDecon} for complexes.  Recall that if $M_\bullet$ is a complex and $M_\bullet \in \mathcal{N} \prec \mathcal{H}_\theta$, the complexes $M_\bullet \restriction \mathcal{N}$ and $M_\bullet / \mathcal{N}$ were defined in Lemma \ref{lem_restrictComplexN}.
\begin{theorem}[characterization of deconstructibility for classes of complexes]\label{thm_CharacterizeDecon_COMPLEXES}
Suppose $\kappa$ is a regular uncountable cardinal, $R$ is a $<\kappa$-Noetherian ring, and $\mathcal{K}$ is a class of complexes of $R$-modules.  Consider the following statements:

\begin{enumerate}[label=(\Roman*)$_\kappa^\bullet$]
 \item\label{item_KappaDecon_COMPLEX} $\mathcal{K}$ is strongly $<\kappa$-deconstructible.
 \item\label{item_ElemSub_COMPLEX} Whenever $\mathcal{N}$ is an elementary submodel of $(H_\theta,\in,R,\kappa, \mathcal{K} \cap H_\theta )$ and $\mathcal{N} \cap \kappa $ is transitive, then for all complexes $M_\bullet$: 
 \begin{equation*}
 M_\bullet \in \mathcal{N} \cap \mathcal{K} \ \implies M_\bullet \restriction \mathcal{N}  \in \mathcal{K} \text{ and } M_\bullet / \mathcal{N} \in \mathcal{K}. 
 \end{equation*}

 \item\label{item_ElemSub_PAIRS_COMPLEX} 
 
Whenever $\mathcal{N} \subseteq \mathcal{N}'$ are both elementary submodels of $(H_\theta,\in,R,\kappa, \mathcal{K} \cap H_\theta )$ and both $\mathcal{N} \cap \kappa $ and $\mathcal{N}' \cap \kappa$ are transitive, then for all complexes $M_\bullet$: 
 \begin{equation*}
 M_\bullet \in \mathcal{N} \cap \mathcal{K} \ \implies M_\bullet \restriction \mathcal{N}  \in \mathcal{K} \text{ and } \frac{M_\bullet \restriction \mathcal{N}'}{M_\bullet \restriction \mathcal{N}} \in \mathcal{K}. 
 \end{equation*}

 \end{enumerate}

\noindent The following implications always hold:
\begin{center}
\ref{item_KappaDecon_COMPLEX}  $\Longleftarrow$ \ref{item_ElemSub_COMPLEX} $\Longleftarrow$ \ref{item_ElemSub_PAIRS_COMPLEX}.
\end{center}
If $\mathcal{K}$ is closed under transfinite extensions, then 
\begin{center}
\ref{item_KappaDecon_COMPLEX} $\iff$ \ref{item_ElemSub_COMPLEX}  $\iff$ \ref{item_ElemSub_PAIRS_COMPLEX}.
\end{center}
\end{theorem}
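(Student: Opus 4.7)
The plan is to mirror the proof of Theorem \ref{thm_CharacterizeDecon} essentially verbatim, substituting the complex-level tools from Section \ref{sec_AlgRef} (particularly the restriction/quotient constructions of Lemma \ref{lem_restrictComplexN} and the compatibility results of Lemma \ref{lem_ExactRestrctNoProjective}) for their module counterparts. The direction \ref{item_ElemSub_PAIRS_COMPLEX} $\Rightarrow$ \ref{item_ElemSub_COMPLEX} is trivial by specializing $\mathcal{N}'$ to $H_\theta$ (which has transitive intersection $\kappa$ with itself and satisfies $M_\bullet \restriction H_\theta = M_\bullet$), so only the implications \ref{item_ElemSub_COMPLEX} $\Rightarrow$ \ref{item_KappaDecon_COMPLEX} and its converse under closure under transfinite extensions require genuine work.

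For \ref{item_ElemSub_COMPLEX} $\Rightarrow$ \ref{item_KappaDecon_COMPLEX}, I would induct on cardinals $\lambda$, proving that every strongly $\lambda$-presented member of $\mathcal{K}$ is strongly $\mathcal{K}^{<\kappa}_s$-filtered. Given $M_\bullet \in \mathcal{K}$ strongly $\lambda$-presented with $\lambda \ge \kappa$, use Fact \ref{fact_SmoothChain} to build a smooth $\in$-chain $\langle \mathcal{N}_\xi : \xi < \text{cf}(\lambda) \rangle$ of $<\lambda$-sized elementary submodels with transitive intersections with $\kappa$ whose union covers a generating set for $M_\bullet$, and consider the filtration $\langle M_\bullet \restriction \mathcal{N}_\xi : \xi < \text{cf}(\lambda) \rangle$ of $M_\bullet$. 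Part (5) of Lemma \ref{lem_ExactRestrctNoProjective} plays the role of Lemma \ref{lem_IntersectQuotient}, identifying adjacent quotients with $(M_\bullet / (M_\bullet \restriction \mathcal{N}_\xi)) \restriction \mathcal{N}_{\xi+1}$; applying \ref{item_ElemSub_COMPLEX} first to $\mathcal{N}_\xi$ and then to $\mathcal{N}_{\xi+1}$ places these in $\mathcal{K}$, and Lemma \ref{lem_StrongGen} (used termwise on a projective resolution of the complex, cf.\ Lemma \ref{lem_ExactRestrctNoProjective}(4)) makes them strongly $|\mathcal{N}_{\xi+1}|$-presented. The induction hypothesis then refines each adjacent quotient into a $\mathcal{K}^{<\kappa}_s$-filtration, and concatenation yields the desired filtration of $M_\bullet$.

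For \ref{item_KappaDecon_COMPLEX} $\Rightarrow$ \ref{item_ElemSub_PAIRS_COMPLEX} under transfinite extensions, given $\mathcal{N} \subseteq \mathcal{N}' \prec (H_\theta, \in, R, \kappa, \mathcal{K} \cap H_\theta)$ both with transitive intersections with $\kappa$ and $M_\bullet \in \mathcal{N} \cap \mathcal{K}$, weak $<\kappa$-deconstructibility supplies a $\mathcal{K}^{<\kappa}_g$-filtration $\vec{M}_\bullet = \langle M^\xi_\bullet : \xi < \eta \rangle$ of $M_\bullet$ that lies in $\mathcal{N}$. Restricting to $\mathcal{N}$ and using Lemma \ref{lem_ExactRestrctNoProjective}(5) shows that adjacent quotients are isomorphic to $(M^{\xi+1}_\bullet / M^\xi_\bullet) \restriction \mathcal{N}$, which equals $M^{\xi+1}_\bullet / M^\xi_\bullet$ since the latter is $<\kappa$-generated termwise and $\mathcal{N} \cap \kappa$ is transitive; closure under transfinite extensions then gives $M_\bullet \restriction \mathcal{N} \in \mathcal{K}$. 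For the harder quotient $(M_\bullet \restriction \mathcal{N}') / (M_\bullet \restriction \mathcal{N})$, I would define the termwise complex analog of
\[
\left\langle \frac{(M_\bullet \restriction \mathcal{N}) + (M^\xi_\bullet \restriction \mathcal{N}')}{M_\bullet \restriction \mathcal{N}} : \xi \in \mathcal{N}' \cap \eta \right\rangle
\]
and reprove the complex version of Claim \ref{clm_QuotOfUnions} in each degree.

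The main obstacle will be this last step: verifying the complex analog of Claim \ref{clm_QuotOfUnions} degree by degree, especially the Case 2 argument when $\xi \notin \mathcal{N}$. The module proof relies crucially on Lemma \ref{lem_IntersectQuotient} and Theorem \ref{thm_CharactNoeth} to control intersections of submodules with the $\mathcal{N}$-span; in the complex setting these apply termwise, but one has to check that the boundary maps behave compatibly so that the kernel computation for the complex-level $\Phi$ goes through at every degree simultaneously and the resulting isomorphism commutes with differentials. I expect these checks to be diagrammatic rather than introducing any substantively new ideas beyond those already present in the module proof.
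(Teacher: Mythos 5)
Your proposal is correct and is exactly the route the paper takes: the paper omits this proof, stating that it is almost identical to that of Theorem \ref{thm_CharacterizeDecon} with part \ref{item_QuotientComplex} of Lemma \ref{lem_ExactRestrctNoProjective} playing the role of Lemma \ref{lem_IntersectQuotient}, which is precisely your substitution. The degree-by-degree checks you flag (including Case 2 of the analogue of Claim \ref{clm_QuotOfUnions}) are indeed purely diagrammatic, since every map involved is induced by inclusions and coset maps and hence automatically commutes with the differentials.
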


The proof is omitted because it is almost identical to the proof above.  For example, the analogue of the module isomorphism in  \eqref{eq_SmoothChainQuot} is the isomorphism of complexes
\begin{equation*}
\left( \frac{M_\bullet}{M_\bullet \restriction \mathcal{N}_\xi } \right) \restriction \mathcal{N}_{\xi+1}  \simeq \frac{M_\bullet \restriction \mathcal{N}_{\xi+1}}{\big(M_\bullet \restriction \mathcal{N}_\xi \big) \restriction \mathcal{N}_{\xi+1}} \ \left( =  \frac{M_\bullet \restriction \mathcal{N}_{\xi+1}}{M_\bullet \restriction \mathcal{N}_\xi  }  \right)
\end{equation*}
given by part \ref{item_QuotientComplex} of Lemma \ref{lem_ExactRestrctNoProjective}.  

\begin{remark}
We chose to work in the familiar setting of unital rings, but the proof of Theorem \ref{thm_CharacterizeDecon} makes no essential use of the assumption that the ring is unital (recall that the forward direction of Theorem \ref{thm_CharactNoeth} worked in the non-unital setting).  As pointed out by the anonymous referee, the generalization of Theorem \ref{thm_CharacterizeDecon} to the non-unital setting directly implies Theorem \ref{thm_CharacterizeDecon_COMPLEXES} as an immediate corollary, since categories of complexes can be viewed as categories of unitary modules over rings with enough idempotents.

\end{remark}

Theorem \ref{thm_CharacterizeDecon_COMPLEXES} should generalize to other categories that have some appropriate analogues of the operations $M_\bullet \restriction \mathcal{N}$ and $M_\bullet / \mathcal{N}$ (for elementary submodels $\mathcal{N}$ that have the object $M_\bullet$ as an element).

\section{Eventual almost everywhere closure under quotients and transfinite extensions}\label{sec_AEclosureQuot}

If $R$ is a $<\kappa$-Noetherian ring and $\mathcal{F}$ is a strongly $<\kappa$-deconstructible class of $R$-modules closed under transfinite extensions, then Theorem \ref{thm_CharacterizeDecon} tells us that if 
\[
\mathcal{N} \prec \mathcal{N}' \prec (H_\theta,\in,R,\kappa, \mathcal{F} \cap H_\theta) 
\]
and both $\mathcal{N}$ and $\mathcal{N}'$ have transitive intersection with $\kappa$, then for all $R$-modules $M \in \mathcal{N}$,
\begin{equation}\label{eq_MotivatingImplic}
M \in \mathcal{F} \ \implies \ \Big( \langle \mathcal{N} \cap M \rangle \in \mathcal{F}, \ \langle \mathcal{N}' \cap M \rangle \in \mathcal{F}, \text{ and } \frac{\langle \mathcal{N}' \cap M \rangle}{\langle \mathcal{N} \cap M \rangle} \in \mathcal{F} \Big) 
\end{equation}
Theorem \ref{thm_CharacterizeDecon_COMPLEXES} gives a similar consequence for strongly $<\kappa$-deconstructible classes of complexes.

We next define the notion of a class $\mathcal{F}$ being ``$\kappa$-almost everywhere closed under quotients"; this is basically what one gets by moving the statements $\langle \mathcal{N} \cap M \rangle \in \mathcal{F}$ and $\langle \mathcal{N}' \cap M \rangle \in \mathcal{F}$ from the consequent of \eqref{eq_MotivatingImplic} to the antecedent of \eqref{eq_MotivatingImplic}.

\begin{definition}\label{def_KappaAE}
Let $\mathcal{K}$ be a class of complexes of $R$-modules, and $\kappa$ a regular uncountable cardinal.  We will say that:
\begin{enumerate}[label=(\alph*)]
 \item\label{item_Def_KAE_quot} \textbf{$\boldsymbol{\mathcal{K}}$ is $\boldsymbol{\kappa}$-almost everywhere closed under quotients} if the following holds:  whenever $M_\bullet$ is a complex of $R$-modules, $M_\bullet \in \mathcal{N} \prec \mathcal{N}' \prec \mathcal{H}_\theta$, and both $\mathcal{N}$ and $\mathcal{N}'$ have transitive intersection with $\kappa$, then the following implication holds:
\begin{equation}\label{eq_TwoElemSub_Quot}
\Big( M_\bullet \in \mathcal{K}, \ M_\bullet \restriction \mathcal{N} \in \mathcal{K} \text{ and } M_\bullet \restriction \mathcal{N}' \in \mathcal{K} \Big) \implies \ \frac{M_\bullet \restriction \mathcal{N}'}{M_\bullet \restriction \mathcal{N}} \in \mathcal{K}. \tag{*}
\end{equation}
 
 \item \textbf{$\boldsymbol{\mathcal{K}}$ is $\boldsymbol{\kappa}$-almost everywhere closed under transfinite extensions} if the following holds:  whenever $M_\bullet \in \mathcal{K}$ and $\vec{\mathcal{N}} = \langle \mathcal{N}_\xi \ : \ \xi < \eta \rangle$ is a $\subseteq$-continuous and $\subseteq$-increasing sequence of elementary submodels of $\mathcal{H}_\theta$ such that:
 \begin{enumerate}
  \item $M_\bullet \in \mathcal{N}_0$;

  \item Each $\mathcal{N}_\xi$ has transitive intersection with $\kappa$;

  \item $M_\bullet \restriction \mathcal{N}_0 \in \mathcal{K}$; and
  
  \item $\frac{M_\bullet \restriction \mathcal{N}_{\xi+1}}{M_\bullet \restriction \mathcal{N}_{\xi}} \in \mathcal{K}$ whenever $\xi+1 < \eta$, 
  \end{enumerate}
   then $M_\bullet \restriction \bigcup_{\xi < \eta} \mathcal{N}_\xi$ is an element of $\mathcal{K}$.
    \end{enumerate}
    
\noindent We say that ``$\mathcal{K}$ is eventually almost everywhere closed under quotients and transfinite extensions" if these properties hold for all sufficiently large regular $\kappa$.  

For a class $\mathcal{F}$ of modules, the definition of $\kappa$-almost everywhere closure under quotients and transfinite extensions is defined similarly, with the obvious adjustments; e.g., for a module $M$ with $M \in \mathcal{N} \prec \mathcal{N}' \prec \mathcal{H}_\theta$, the requirement \eqref{eq_TwoElemSub_Quot} is replaced by:
\begin{equation}
\Big(M \in \mathcal{F}, \  \langle \mathcal{N} \cap M \rangle \in \mathcal{F}, \text{ and } \langle \mathcal{N}' \cap M \rangle \in \mathcal{F} \Big) \implies \ \frac{\langle \mathcal{N}' \cap M \rangle}{\langle \mathcal{N} \cap M \rangle}  \in \mathcal{F}. \tag{**}
\end{equation}
\end{definition}

Lemma \ref{lem_QuotNecCondition} is then immediate, by the \ref{item_KappaDecon} $\implies$ \ref{item_ElemSub_PAIRS} direction of Theorem \ref{thm_CharacterizeDecon}.

\begin{observation}\label{obs_EntireQuotient}
If $\mathcal{K}$ is $\kappa$-a.e.\ closed under quotients, $M_\bullet \in \mathcal{N} \prec \mathcal{H}_\theta$, $\mathcal{N} \cap \kappa$ is transitive, and $M_\bullet$ and $M_\bullet \restriction \mathcal{N}$ are both in $\mathcal{K}$, then $M_\bullet / \mathcal{N} \in \mathcal{K}$.
\end{observation}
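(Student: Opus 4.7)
The plan is to obtain this as a direct instance of the defining implication \eqref{eq_TwoElemSub_Quot} of $\kappa$-almost everywhere closure under quotients, simply by choosing the larger elementary submodel $\mathcal{N}'$ to be all of $\mathcal{H}_\theta$. The point is that $\mathcal{H}_\theta$ is trivially an elementary submodel of itself, so the pair $(\mathcal{N},\mathcal{H}_\theta)$ witnesses the set-up in Definition \ref{def_KappaAE}\ref{item_Def_KAE_quot}, and it makes the quotient on the right side of \eqref{eq_TwoElemSub_Quot} collapse to exactly $M_\bullet/\mathcal{N}$.

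First I would verify the hypotheses of \eqref{eq_TwoElemSub_Quot} with $\mathcal{N}' := \mathcal{H}_\theta$. We have $M_\bullet \in \mathcal{N} \prec \mathcal{N}' \prec \mathcal{H}_\theta$ by assumption (the last two $\prec$'s being trivial). For transitivity of intersections with $\kappa$: the hypothesis gives it for $\mathcal{N}$, and for $\mathcal{N}' = \mathcal{H}_\theta$ we have $\mathcal{N}' \cap \kappa = \kappa$ (assuming $\theta > \kappa$, which we can always arrange), which is an ordinal and hence transitive. Next I would note that because $M_\bullet \in H_\theta$ and $H_\theta$ is transitive, each component $M_n \subseteq H_\theta$, so $\langle \mathcal{N}' \cap M_n \rangle = M_n$ for every $n$. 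Hence $M_\bullet \restriction \mathcal{N}' = M_\bullet$, which by hypothesis lies in $\mathcal{K}$; together with $M_\bullet \in \mathcal{K}$ and $M_\bullet \restriction \mathcal{N} \in \mathcal{K}$ this verifies the antecedent of \eqref{eq_TwoElemSub_Quot}.

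Applying the defining implication then yields
\[
\frac{M_\bullet \restriction \mathcal{N}'}{M_\bullet \restriction \mathcal{N}} \;=\; \frac{M_\bullet}{M_\bullet \restriction \mathcal{N}} \;=\; M_\bullet / \mathcal{N} \;\in\; \mathcal{K},
\]
which is precisely the desired conclusion. There is no real obstacle here: the only thing to double-check is the identification $M_\bullet \restriction \mathcal{H}_\theta = M_\bullet$, which reduces to the basic transitivity fact about $H_\theta$ noted above. So the observation is essentially a restatement of the $\mathcal{N}'=\mathcal{H}_\theta$ case of Definition \ref{def_KappaAE}\ref{item_Def_KAE_quot}.
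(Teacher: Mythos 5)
Your proposal is correct and is exactly the paper's own argument: take $\mathcal{N}' := \mathcal{H}_\theta$, note that $M_\bullet \restriction \mathcal{H}_\theta = M_\bullet$ by transitivity of $H_\theta$, and apply the defining implication \eqref{eq_TwoElemSub_Quot}. The additional checks you record (transitivity of $\mathcal{H}_\theta \cap \kappa$ and the identification $\langle \mathcal{N}' \cap M_n\rangle = M_n$) are the right ones and present no issues.
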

\begin{proof}
Just consider $\mathcal{N}':= \mathcal{H}_\theta$, in which case $M_\bullet \restriction \mathcal{N}'=M_\bullet$.
\end{proof}

The reader may wonder why there was an $\mathcal{N}'$ involved at all in Definition \ref{def_KappaAE}; i.e., why not just require that if $M_\bullet$ and $M_\bullet \restriction \mathcal{N}$ are in $\mathcal{K}$, then $M_\bullet / \mathcal{N}$ is in $\mathcal{K}$?  The role of the (possibly small) $\mathcal{N}'$ in Definition \ref{def_KappaAE} will become apparent in the closure argument in the proof of Theorem \ref{thm_AbstractImplyDec}.  

On the other hand, the reader may also wonder why we include $M_\bullet \in \mathcal{K}$ in the hypotheses of \eqref{eq_TwoElemSub_Quot}; i.e., why not require the following stronger variant of \eqref{eq_TwoElemSub_Quot}?
\begin{equation}\label{eq_NotTheDef}
\Big(  \ M_\bullet \restriction \mathcal{N} \in \mathcal{K} \text{ and } M_\bullet \restriction \mathcal{N}' \in \mathcal{K} \Big) \implies \ \frac{M_\bullet \restriction \mathcal{N}'}{M_\bullet \restriction \mathcal{N}} \in \mathcal{K}.
\end{equation}
The reason is that doing so would result in too weak of a version of Theorem \ref{thm_MainThm}, and would also \emph{not} be good enough to prove our results about Gorenstein Projectivity.  The weaker implication    \eqref{eq_TwoElemSub_Quot} holds for the classes of complexes relevant to Gorenstein Projectivity, while the stronger \eqref{eq_NotTheDef} does not (at least, it is not clear that it holds).  See the proof of Lemma \ref{lem_GorRefLemma} below, where the $P_\bullet$ itself is assumed to be in the relevant class of complexes; this assumption is used in order to apply Lemma \ref{lem_Kaplansky} and the 3-by-3 lemma at a crucial point in that proof.

The ``$\kappa$-almost everywhere" terminology was chosen because of the connection with Shelah's \emph{Stationary Logic}, which is a relatively well-behaved fragment of 2nd Order Logic that uses a quantifier \emph{aa} to express ``almost all" relative to some filter.  The connection is easiest to see in the case $\kappa = \aleph_1$, since elementary submodels of $\mathcal{H}_\theta$ \textbf{always} have transitive intersection with $\omega_1$.\footnote{For $\kappa \ge \omega_2$ things are complicated by the possibility that Chang's Conjecture may hold.}  The class $\mathcal{K}$ is $\aleph_1$-a.e.\ closed under quotients if and only if for every complex $M_\bullet$ and every $\mathcal{H}_\theta$ such that $M_\bullet \in H_\theta$, the structure $(H_\theta,\in, M_\bullet, \mathcal{K} \cap H_\theta)$ satisfies: 
\begin{equation}\label{eq_aa_ClosedQuot}
 \text{aa} Z \ \text{aa} Z'  \left(  \big(M_\bullet \in \mathcal{K}, \ M_\bullet \restriction Z \in \mathcal{K},  \text{ and } M_\bullet \restriction Z' \in \mathcal{K} \big) \ \implies  \frac{M_\bullet \restriction Z'}{M_\bullet \restriction Z} \in \mathcal{K} \right),
\end{equation}
where the \emph{aa} quantifier here refers to the ``strong" club filter on the full powerset of $H_\theta$.\footnote{This is the filter on the full $\wp(H_\theta)$ generated by sets of the form $C_F:=\{ Z \subset H_\theta \ : \ Z \text{ is closed under } F  \}$ where $F: [H_\theta]^{<\omega} \to H_\theta$; see Foreman~\cite{MattHandbook}.  In particular, the $Z$'s and $Z'$'s here are not necessarily required to be countable.  This is not to be confused with the club filter on $\wp_{\omega_1}(H_\theta)$ in the sense of Jech, whose measure one sets concentrate on countable subsets of $H_\theta$; Jech's filter can be viewed as the restriction of the club filter to $\wp_{\omega_1}(H_\theta)$.}  Similarly, part \ref{item_ElemSub} of the statement of Theorem \ref{thm_CharacterizeDecon} can also be expressed in terms of Stationary Logic.

\subsection{Special case:  $\mathfrak{X}$-Gorenstein Projective modules}\label{sec_GorenQuot}

For any class $\mathfrak{X}$ of $R$-modules, an $R$-module $G$ is called \textbf{$\boldsymbol{\mathfrak{X}}$-Gorenstein Projective} if there exists an exact complex $P_\bullet=\big(\xymatrix{ P_n \ar[r]^-{f_n} & P_{n+1} }\big)_{n \in \mathbb{Z}}$ such that:
\begin{itemize}
 \item each $P_n$ is projective;
 \item For all $X \in \mathfrak{X}$, the complex $\text{Hom}_R(P_\bullet,X)$ is exact (we will often express this by saying ``$P_\bullet$ is $\text{Hom}_R(-,\mathfrak{X})$-exact"); and  
 \item $G= \text{ker}(f_0)$.
\end{itemize}

\noindent The class of all exact, $\text{Hom}_R(-,\mathfrak{X})$-exact complexes of projective modules will be denoted $\boldsymbol{\mathcal{K}(\mathfrak{X}\textbf{-}\mathcal{GP})}$, and the class of all $\mathfrak{X}$-Gorenstein Projective modules will be denoted $\boldsymbol{\mathfrak{X}} \textbf{-} \boldsymbol{\mathcal{GP}}$.  If the $\mathfrak{X}$ is not specified, as in $\mathcal{GP}$, it is understood to be the class of all projective modules.  When $\mathfrak{X}$ is the class of all flat modules, $\mathfrak{X}$-$\mathcal{GP}$ is known as the class of \emph{Ding Projective} modules.  Other instances of $\mathfrak{X}$ were considered in \cite{MR2591696}, \cite{bravo2014stable}, and elsewhere.

The main result of this section is Corollary \ref{cor_KGP_KappaAEquottfext}, which says that if $R$ is $<\kappa$-Noetherian, then classes of complexes of the form $\mathcal{K}(\mathfrak{X}\text{-}\mathcal{GP})$ are always closed under transfinite extensions (this is due to Enochs-Iacob-Jenda~\cite{MR2342674}), and are $\kappa$-a.e.\ downward closed under quotients.

\begin{lemma}[Gorenstein Projective Quotient Lemma]\label{lem_GorRefLemma} 
Suppose $\kappa$ is regular, $R$ is $<\kappa$-Noetherian, $\mathfrak{X}$ is any collection of $R$-modules, and $P_\bullet \in \mathcal{K}(\mathfrak{X}\text{-}\mathcal{GP})$.  Suppose $\mathcal{N} \prec \mathcal{H}_\theta$, $\mathcal{N} \cap \kappa$ is transitive, and $R$ and $P_\bullet$ are elements of $\mathcal{N}$. Suppose also that $P_\bullet \restriction \mathcal{N} \in \mathcal{K}(\mathfrak{X}\text{-}\mathcal{GP})$.  Let $G:= \text{ker}(P_0 \to P_1)$.   Then: 
 \begin{enumerate}
  \item\label{item_0thMapPN} $P_\bullet \restriction \mathcal{N}$ witnesses that $\langle \mathcal{N} \cap G \rangle$ is $\mathfrak{X}$-Gorenstein projective.  
  \item\label{item_PfracN} $P_\bullet / \mathcal{N}$ is in $\mathcal{K}(\mathfrak{X}\text{-}\mathcal{GP})$ and witnesses that  $\frac{G}{\langle \mathcal{N} \cap G \rangle}$ is $\mathfrak{X}$-Gorenstein projective.
  \item\label{item_AnotherNprime} If $\mathcal{N}'$ is another elementary submodel such that $\mathcal{N} \prec \mathcal{N}' \prec \mathcal{H}_\theta$, $\mathcal{N}'$ has transitive intersection with $\kappa$, and $P_\bullet \restriction \mathcal{N}' \in \mathcal{K}(\mathfrak{X}\text{-}\mathcal{GP})$, then $\frac{P_\bullet \restriction \mathcal{N}'}{ P_\bullet \restriction \mathcal{N}}$ is in the class $\mathcal{K}(\mathfrak{X}\text{-}\mathcal{GP})$, and witnesses that $\frac{\langle \mathcal{N}' \cap G \rangle}{\langle \mathcal{N} \cap G \rangle}$ is in $\mathfrak{X}$-$\mathcal{GP}$.
 \end{enumerate}
 \end{lemma}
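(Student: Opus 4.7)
The plan is to use the algebraic reflection machinery from Section \ref{sec_AlgRef}, in particular Lemma \ref{lem_Kaplansky} (restriction of projectives) and Lemma \ref{lem_ExactRestrctNoProjective} (restriction of exact complexes), together with a short exact sequence of complexes that is degreewise split; standard homological arguments (the $3\times 3$ lemma) will then propagate the $\mathrm{Hom}_R(-,\mathfrak{X})$-exactness from $P_\bullet$ and its restrictions to the relevant quotient complexes.

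Part \ref{item_0thMapPN} is essentially a bookkeeping step: by hypothesis $P_\bullet\restriction\mathcal{N}$ already lies in $\mathcal{K}(\mathfrak{X}\textbf{-}\mathcal{GP})$, so it is an exact complex of projectives that is $\mathrm{Hom}_R(-,\mathfrak{X})$-exact. All that remains is to identify its $0$-th kernel, and part \ref{item_KernelRestrict} of Lemma \ref{lem_ExactRestrctNoProjective} (applied with $n=0$) gives $\ker_0(P_\bullet\restriction\mathcal{N}) = \langle \mathcal{N}\cap G\rangle$.

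For part \ref{item_PfracN}, I would apply Lemma \ref{lem_ExactRestrctNoProjective}\ref{item_ExactRestrictAndQuot} to get exactness of $P_\bullet/\mathcal{N}$, Lemma \ref{lem_Kaplansky} to get termwise projectivity of $P_\bullet/\mathcal{N}$, and Lemma \ref{lem_ExactRestrctNoProjective}\ref{item_KernelIsoQuot} to identify the $0$-th kernel with $G/\langle\mathcal{N}\cap G\rangle$. The crux is $\mathrm{Hom}_R(-,\mathfrak{X})$-exactness. For this, consider the short exact sequence of complexes
\begin{equation*}
0 \to P_\bullet\restriction\mathcal{N} \to P_\bullet \to P_\bullet/\mathcal{N} \to 0.
\end{equation*}
Since each quotient $P_n/\langle \mathcal{N}\cap P_n\rangle$ is projective by Lemma \ref{lem_Kaplansky}, this sequence is degreewise split; hence applying $\mathrm{Hom}_R(-,X)$ for any $X\in\mathfrak{X}$ again yields a short exact sequence of complexes. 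The middle term is an exact complex because $P_\bullet\in\mathcal{K}(\mathfrak{X}\textbf{-}\mathcal{GP})$, and the left term is exact because $P_\bullet\restriction\mathcal{N}\in\mathcal{K}(\mathfrak{X}\textbf{-}\mathcal{GP})$; the long exact sequence in homology (equivalently, the $3\times 3$ lemma) then forces $\mathrm{Hom}_R(P_\bullet/\mathcal{N},X)$ to be exact as well.

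Part \ref{item_AnotherNprime} follows the same blueprint applied to the short exact sequence
\begin{equation*}
0 \to P_\bullet\restriction\mathcal{N} \to P_\bullet\restriction\mathcal{N}' \to \frac{P_\bullet\restriction\mathcal{N}'}{P_\bullet\restriction\mathcal{N}} \to 0.
\end{equation*}
Exactness of the quotient complex is delivered by the $3\times 3$ lemma (both outer complexes are exact by Lemma \ref{lem_ExactRestrctNoProjective}\ref{item_ExactRestrictAndQuot}), termwise projectivity by Lemma \ref{lem_Kaplansky}, and $\mathrm{Hom}_R(-,\mathfrak{X})$-exactness by the same degreewise-split argument as above. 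The only non-routine step is identifying the $0$-th kernel of the quotient complex as $\langle\mathcal{N}'\cap G\rangle / \langle\mathcal{N}\cap G\rangle$. By exactness, this kernel equals the image of the $(-1)$-th differential; computing that image inside $\langle\mathcal{N}'\cap P_0\rangle / \langle\mathcal{N}\cap P_0\rangle$ and applying the second isomorphism theorem rewrites it as $\langle\mathcal{N}'\cap G\rangle / \bigl(\langle\mathcal{N}'\cap G\rangle \cap \langle\mathcal{N}\cap P_0\rangle\bigr)$. Here is where the $<\kappa$-Noetherian hypothesis on $R$ enters: Theorem \ref{thm_CharactNoeth} applied to $\mathcal{N}$, the submodule $G\subseteq P_0$, and the ambient $\langle\mathcal{N}\cap P_0\rangle$ gives $G\cap \langle\mathcal{N}\cap P_0\rangle = \langle\mathcal{N}\cap G\rangle$, collapsing the denominator to exactly what we want.

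The main obstacle I anticipate is precisely the $\mathrm{Hom}_R(-,\mathfrak{X})$-exactness of the quotient complexes in parts \ref{item_PfracN} and \ref{item_AnotherNprime}. This is the place where the standing hypothesis that $P_\bullet$ \emph{itself} lies in $\mathcal{K}(\mathfrak{X}\textbf{-}\mathcal{GP})$ (not merely its restriction) is used in an essential way: without knowing that the middle Hom-complex is exact, the $3\times 3$ lemma does not deliver exactness of the outer Hom-complex. This is also the structural reason the formulation of ``$\kappa$-almost everywhere closure under quotients'' in Definition \ref{def_KappaAE} retains $M_\bullet\in\mathcal{K}$ in the hypotheses of (*), as was previewed in the paragraph following Observation \ref{obs_EntireQuotient}.
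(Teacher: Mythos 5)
Your proposal is correct and follows essentially the same route as the paper: Lemmas \ref{lem_Kaplansky} and \ref{lem_ExactRestrctNoProjective} for exactness, projectivity, and kernel identification, then the degreewise-split short exact sequence plus the 3-by-3 lemma to transfer $\mathrm{Hom}_R(-,\mathfrak{X})$-exactness to the quotient complexes, with the hypothesis $P_\bullet\in\mathcal{K}(\mathfrak{X}\text{-}\mathcal{GP})$ playing exactly the role you identify. Your explicit computation of the $0$-th kernel in part \ref{item_AnotherNprime} via the second isomorphism theorem and Theorem \ref{thm_CharactNoeth} is a correct filling-in of a step the paper leaves to ``the rest of the argument is identical.''
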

\begin{proof}

Both $P_\bullet \restriction \mathcal{N}$ and $P_\bullet / \mathcal{N}$ are exact, by Lemma \ref{lem_ExactRestrctNoProjective}.
That lemma also shows that the kernels at index 0 of $P_\bullet \restriction \mathcal{N}$ and $P_\bullet / \mathcal{N}$ are, respectively,
\[
\langle \mathcal{N} \cap G \rangle \text{ and } \frac{G}{\langle \mathcal{N} \cap G \rangle}.
\]
Now $P_\bullet \restriction \mathcal{N}$ is in $\mathcal{K}(\mathfrak{X}\text{-}\mathcal{GP})$ by assumption, so this completes the proof of part \eqref{item_0thMapPN}.  To prove part \eqref{item_PfracN}, it remains to show that $P_\bullet / \mathcal{N}$ is in the class $\mathcal{K}(\mathfrak{X}\text{-}\mathcal{GP})$.  It is an exact complex of projective modules, by Lemma \ref{lem_ExactRestrctNoProjective}, so we just have to show that $P_\bullet / \mathcal{N}$ is $\text{Hom}_R(-,\mathfrak{X})$-exact.

Lemma \ref{lem_Kaplansky} implies that for all $n \in \mathbb{Z}$, $\langle \mathcal{N} \cap P_n \rangle$ and $P_n / \langle \mathcal{N} \cap P_n \rangle$ are projective.  Hence, for each $n \in \mathbb{Z}$,
\begin{equation}\label{eq_SplitAt_n}
\xymatrix{
0 \ar[r] & \langle \mathcal{N} \cap P_n \rangle \ar[r]^-{\text{id}_n} & P_n \ar[r]^-{\pi_n} & P_n / \langle \mathcal{N} \cap P_n \rangle \ar[r] & 0 
}
\end{equation}
is \emph{split} exact.\footnote{We are not claiming that $0_\bullet \to P_\bullet \restriction \mathcal{N} \to P_\bullet \to P_\bullet / \mathcal{N} \to 0_\bullet$ splits as a sequence of complexes; i.e.\ it is probably not the case that the splittings of \eqref{eq_SplitAt_n} commute with the $f_n$'s. }  This implies that for all $n \in \mathbb{Z}$ and any module $X$,
\[
\xymatrix{
0 \ar@{<-}[r] & \text{Hom}_R\big( \langle \mathcal{N} \cap P_n \rangle, X \big) \ar@{<-}[r]^-{\text{id}^*_n} & \text{Hom}_R\big( P_n, X \big) \ar@{<-}[r]^-{\pi_n^*} & \text{Hom}_R\big( P_n / \langle \mathcal{N} \cap P_n \rangle ,X \big) \ar@{<-}[r] & 0
}
\]
is (split) exact, and hence that for every module $X$,
\begin{equation}\label{eq_SES_of_Complexes}
\xymatrix{
0_\bullet \ar@{<-}[r] & \text{Hom}_R\big( P_\bullet \restriction \mathcal{N}, X \big) \ar@{<-}[r]^-{\text{id}^*_\bullet} & \text{Hom}_R\big( P_\bullet, X \big) \ar@{<-}[r]^-{\pi^*_\bullet} & \text{Hom}_R\big( P_\bullet / \mathcal{N} ,X\big) \ar@{<-}[r] & 0_\bullet
}
\end{equation}
is an exact sequence of complexes (though not necessarily \emph{split} exact as a sequence of complexes).

Now assume $X$ is any member of $\mathfrak{X}$.  Then:
\begin{itemize}
 \item $\text{Hom}_R(P_\bullet, X)$ is exact by assumption;
 \item $\text{Hom}_R(P_\bullet \restriction \mathcal{N}, X)$ is exact, by the assumption that $P_\bullet \restriction \mathcal{N} \in \mathcal{K}(\mathfrak{X}\text{-}\mathcal{GP})$; and 
 \item  \eqref{eq_SES_of_Complexes} is an exact sequence of complexes. 
\end{itemize}

Then by the 3-by-3 lemma for complexes, $\text{Hom}_R(P_\bullet / \mathcal{N},X)$ is also exact.  This completes the proof of part \eqref{item_PfracN}.  The proof of part \eqref{item_AnotherNprime} is similar, except one instead uses Lemma \ref{lem_Kaplansky} to note 
that $\frac{\langle \mathcal{N}' \cap P_n \rangle}{\langle \mathcal{N} \cap P_n \rangle}$ is projective, and hence the short exact sequence
\[
\xymatrix{
0 \ar[r] & \langle \mathcal{N} \cap P_n \rangle \ar[r]^-{\text{id}} & \langle \mathcal{N}' \cap P_n \rangle \ar[r] & \frac{\langle \mathcal{N}' \cap P_n \rangle}{\langle \mathcal{N} \cap P_n \rangle} \ar[r] & 0
}
\]
splits for each $n \in \mathbb{Z}$.  The rest of the argument is identical to the one above.

\end{proof}

\begin{corollary}\label{cor_KGP_KappaAEquottfext}
Suppose $\kappa$ is regular and uncountable, $R$ is $<\kappa$-Noetherian, and $\mathfrak{X}$ is any class of $R$-modules.  Then the class $\mathcal{K}(\mathfrak{X}\text{-}\mathcal{GP})$ of complexes is closed under (all) transfinite extensions, and is $\kappa$-a.e.\ closed under quotients.
\end{corollary}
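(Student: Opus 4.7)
The plan is to split the corollary into its two independent assertions and dispatch each separately, observing that the real work has already been done in the preceding lemma. I would state up front that the first clause will come essentially by citation, while the second clause is really just a rephrasing of part \eqref{item_AnotherNprime} of Lemma \ref{lem_GorRefLemma} in the language of Definition \ref{def_KappaAE}.

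For the closure of $\mathcal{K}(\mathfrak{X}\text{-}\mathcal{GP})$ under transfinite extensions, I would simply invoke Enochs--Iacob--Jenda~\cite{MR2342674}. For the reader who wants the argument spelled out, I would sketch it as an induction along a filtration $\langle P^\xi_\bullet \ : \ \xi < \eta\rangle$ of $P_\bullet$ with $P^0_\bullet$ and each adjacent quotient in $\mathcal{K}(\mathfrak{X}\text{-}\mathcal{GP})$: the facts that $P_\bullet$ is a complex of projectives and is exact follow from the standard transfer of these two properties under transfinite extensions (using that extensions of projectives by projectives are projective when the quotient is projective, and that filtered colimits of exact sequences are exact); the remaining $\mathrm{Hom}_R(-,X)$-exactness, for a fixed $X \in \mathfrak{X}$, is checked by induction on $\eta$ using the short exact sequence of complexes $0_\bullet \to P^\xi_\bullet \to P^{\xi+1}_\bullet \to P^{\xi+1}_\bullet / P^\xi_\bullet \to 0_\bullet$, which splits levelwise because $P^{\xi+1}_\bullet/P^\xi_\bullet$ is a complex of projectives, so that $\mathrm{Hom}_R(-,X)$ preserves exactness on it, and then the 3-by-3 lemma for complexes finishes the successor step; limit steps use the continuity of the filtration.

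For the $\kappa$-almost-everywhere closure under quotients, I would just unwind Definition \ref{def_KappaAE}: one is given $P_\bullet \in \mathcal{N} \prec \mathcal{N}' \prec \mathcal{H}_\theta$ with both $\mathcal{N}$ and $\mathcal{N}'$ having transitive intersection with $\kappa$, and with all three of $P_\bullet$, $P_\bullet \restriction \mathcal{N}$, and $P_\bullet \restriction \mathcal{N}'$ in $\mathcal{K}(\mathfrak{X}\text{-}\mathcal{GP})$. These are precisely the hypotheses of part \eqref{item_AnotherNprime} of Lemma \ref{lem_GorRefLemma}, which directly outputs $\frac{P_\bullet \restriction \mathcal{N}'}{P_\bullet \restriction \mathcal{N}} \in \mathcal{K}(\mathfrak{X}\text{-}\mathcal{GP})$. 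So this clause is essentially a tautological reformulation.

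The genuine obstacle has been absorbed into the proof of Lemma \ref{lem_GorRefLemma}, so no further difficulty remains at the level of the corollary itself; what is doing the work behind the scenes is the combination of (i) the $<\kappa$-Noetherian hypothesis, which via Lemma \ref{lem_ExactRestrctNoProjective} guarantees that restrictions of exact complexes of projectives to the relevant elementary submodels stay exact, (ii) the Kaplansky-style reflection of Lemma \ref{lem_Kaplansky}, which yields projectivity of $\langle \mathcal{N} \cap P_n \rangle$ and of $\langle \mathcal{N}' \cap P_n \rangle / \langle \mathcal{N} \cap P_n \rangle$ and hence a levelwise-split short exact sequence of complexes, and (iii) the 3-by-3 lemma for complexes, which transfers $\mathrm{Hom}_R(-,X)$-exactness from the two known complexes to the quotient. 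I would make this chain of dependencies explicit in a final remark so that the reader sees exactly why the $<\kappa$-Noetherian hypothesis on $R$ appears in the statement even though the corollary itself contains no elementary-submodel calculation.
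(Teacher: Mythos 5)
Your proposal is correct and follows essentially the same route as the paper: the paper likewise disposes of the transfinite-extension clause by citing Theorem 2.6 of Enochs--Iacob--Jenda (noting their proof for $\mathfrak{X}=\{\text{projectives}\}$ goes through for arbitrary $\mathfrak{X}$) and obtains the $\kappa$-a.e.\ closure under quotients directly from part \eqref{item_AnotherNprime} of Lemma \ref{lem_GorRefLemma}. Your optional sketch of the transfinite-extension argument and the closing remark tracing where the $<\kappa$-Noetherian hypothesis enters are accurate elaborations of what the paper leaves implicit.
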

\begin{proof}
The class $\mathcal{K}(\mathfrak{X}\text{-}\mathcal{GP})$ is closed under \emph{all} transfinite extensions---not just $\kappa$-a.e.\ closed under transfinite extensions---by Theorem 2.6 of Enochs-Iacob-Jenda~\cite{MR2342674}.  They proved it for $\mathfrak{X} = \{ \text{ projectives } \}$, but the proof goes through for any $\mathfrak{X}$.  The class $\mathcal{K}(\mathfrak{X}$-$\mathcal{GP})$ of complexes is $\kappa$-a.e.\ closed under quotients by part \ref{item_AnotherNprime} of Lemma \ref{lem_GorRefLemma}.  
\end{proof}

\section{$\mathcal{K}$-reflecting elementary submodels}\label{sec_K_ref_elem_subm}

In this section we introduce the key concept that will be used in the proofs of Theorems \ref{thm_Saroch} and \ref{thm_MainThm}:  the notion of a $\mathcal{K}$-reflecting elementary submodel (where $\mathcal{K}$ is a class of complexes).  The main result is Theorem \ref{thm_AbstractImplyDec}, which gives a sufficient condition for a class $\mathcal{K}$ of complexes of modules (and some associated classes of modules) to be $<\kappa$-deconstructible.

Although these results will later be used, along with large cardinals, to prove Theorems \ref{thm_Saroch} and  \ref{thm_MainThm}, we chose to keep this section free of large cardinals, in the hope of being applicable to some still-open questions (e.g., for which rings is $\mathcal{GP}$ \emph{countably} deconstructible; for which rings is $\mathcal{GP} \subseteq \mathcal{GF}$; etc.).  For example, Corollary \ref{cor_KGP_KappaAEquottfext} implies that if $R$ is $\aleph_0$-Noetherian, to show that $\mathcal{GP}$ is $\aleph_0$-deconstructible, it would suffice to show that stationarily many countable elementary submodels of $\mathcal{H}_\theta$ are $\mathcal{GP}$-reflecting.

\begin{definition}\label{def_K_reflect}
Suppose $\mathcal{K}$ is a class of complexes of modules.  If $M_\bullet \in \mathcal{K}$ and $M_\bullet \in \mathcal{N} \prec \mathcal{H}_\theta$, we will say that \textbf{$\boldsymbol{\mathcal{N}}$ is $\boldsymbol{\mathcal{K}}$-reflecting at $\boldsymbol{M_\bullet}$} if $M_\bullet \restriction \mathcal{N} \in \mathcal{K}$.  We will say that $\mathcal{N}$ is \textbf{$\boldsymbol{\mathcal{K}}$-reflecting} if, for every $M_\bullet \in \mathcal{N} \cap \mathcal{K}$, $\mathcal{N}$ is $\mathcal{K}$-reflecting at $M_\bullet$.

In the special case where $\mathcal{K}$ is of the form ``the class of all $\text{Hom}_R(-,\mathfrak{X})$-exact, exact complexes of projective modules" for some class $\mathfrak{X}$, we will sometimes write ``\textbf{$\boldsymbol{\mathfrak{X}}$-$\boldsymbol{\mathcal{GP}}$-reflecting}" instead of ``$\mathcal{K}\big(\mathfrak{X}\text{-}\mathcal{GP}\big)$-reflecting".  This is a slight abuse of terminology because $\mathfrak{X}$-$\mathcal{GP}$ is a class of modules, not a class of complexes.
\end{definition}

The following examples are illustrative.  Suppose $\kappa$ is regular and uncountable, and $R$ is $<\kappa$-Noetherian.
\begin{itemize}
 \item Let $\mathcal{K}$ be the class of all exact complexes of projective $R$-modules.  Then for \emph{any} $\mathcal{N}$ such that $R \in \mathcal{N} \prec \mathcal{H}_\theta$ and $\mathcal{N} \cap \kappa$ is transitive, Lemma \ref{lem_ExactRestrctNoProjective} implies that $\mathcal{N}$ is $\mathcal{K}$-reflecting.  In particular, this holds for $<\kappa$-sized $\mathcal{N}$ such that $\mathcal{N} \cap \kappa \in \kappa$.  Moreover, $\mathcal{K}$ is also $\kappa$-a.e.\ closed under quotients and transfinite extensions.  
 \item On the other hand, while Corollary \ref{cor_KGP_KappaAEquottfext} tells us that (for any $\mathfrak{X}$) the class $\mathcal{K}(\mathfrak{X}$-$\mathcal{GP})$ is closed under transfinite extensions and is $\kappa$-a.e.\ closed under quotients, it is not clear---even for $\mathfrak{X} = \{\text{ projectives} \}$---whether there are \emph{any} $\mathcal{N} \prec \mathcal{H}_\theta$ of size $<\kappa$ that are $\mathcal{K}(\mathfrak{X}$-$\mathcal{GP})$-reflecting.\footnote{Or even whether, for fixed $P_\bullet \in \mathcal{K}(\mathfrak{X}$-$\mathcal{GP})$, there are any $\mathcal{N} \prec \mathcal{H}_\theta$ of size $<\kappa$ that are $\mathcal{K}(\mathfrak{X}$-$\mathcal{GP})$-reflecting at $P_\bullet$. }  We will use large cardinals to remedy this situation in Section \ref{sec_WithHelpOfLC}.  
\end{itemize}

For regular uncountable $\kappa$, $\boldsymbol{\wp_\kappa(H_\theta)}$ refers to the $<\kappa$-sized subsets of $H_\theta$.  We may sometimes write $\boldsymbol{\wp^*_\kappa(H_\theta)}$ to denote the set of $\mathcal{N} \in \wp_\kappa(H_\theta)$ such that $\mathcal{N} \cap \kappa$ is transitive; since $|\mathcal{N}|<\kappa$ and $\kappa$ is regular, this is the same as saying that $\mathcal{N} \cap \kappa \in \kappa$.  For $\kappa = \omega_1$, $\wp_\kappa(H_\theta)$ is essentially (mod clubs) the same as $\wp^*_\kappa(H_\theta)$, but for $\kappa \ge \omega_2$ the two collections can differ significantly (if Chang's Conjecture holds).  A subset $C$ of $\wp^*_\kappa(H_\theta)$ is \textbf{closed and unbounded (club) in $\boldsymbol{\wp^*_\kappa(H_\theta)}$} if $C$ is $\subseteq$-cofinal in $\wp^*_\kappa(H_\theta)$ and closed under $\subseteq$-increasing sequences of length strictly less than $\kappa$.  A set is \textbf{stationary} if it intersects every club.  Kueker's~\cite{MR0457191} following characterization of clubs is convenient; see Theorem 3.6 of Foreman~\cite{MattHandbook} for the formulation given here:  
\begin{theorem}[Kueker]\label{thm_Kueker}
A set $C \subseteq \wp^*_\kappa(H_\theta)$ contains a club if and only if there exists an $F:[H_\theta]^{<\omega} \to H_\theta$ such that
\[
C_F:= \{ z \in \wp^*_\kappa(H_\theta) \ : \  z \text{ is closed under } F    \} \subseteq C.
\]
\end{theorem}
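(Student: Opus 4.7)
The proof splits into two directions, and the plan follows the classical Skolem-function argument.

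For the $(\Leftarrow)$ direction, the strategy is to show that $C_F$ is itself a club in $\wp^*_\kappa(H_\theta)$, whence $C$ contains a club. Closure of $C_F$ under $\subseteq$-increasing sequences of length $<\kappa$ comes essentially for free: any finite subset of such a union already lives in a single member of the chain, so the union remains closed under $F$; regularity of $\kappa$ keeps the cardinality of the union below $\kappa$; and an increasing union of ordinals below $\kappa$ is again an ordinal below $\kappa$, preserving the transitive-intersection condition. For unboundedness above a given $z_0 \in \wp^*_\kappa(H_\theta)$, I would iterate: $z_{n+1}$ adjoins $F(s)$ for each $s \in [z_n]^{<\omega}$, together with all ordinal predecessors of any $\alpha \in z_n \cap \kappa$ (enumerated via a fixed system of bijections $|\alpha| \to \alpha$, to preserve transitivity). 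Regularity keeps each $z_n$ of size $<\kappa$, and $z_\omega = \bigcup_n z_n$ is the desired member of $C_F$ extending $z_0$.

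For the $(\Rightarrow)$ direction, suppose $C$ contains a club $D$. Fix a well-ordering $<^*$ of $H_\theta$ and consider the expanded structure $\mathfrak{B} = (H_\theta, \in, D, <^*, \kappa)$. The countably many Skolem functions of $\mathfrak{B}$ can be coded into a single function $F : [H_\theta]^{<\omega} \to H_\theta$, using a pairing trick in which a finite input is split into an ``index coordinate'' (a natural number naming a formula) and ``argument coordinates''; constants pick out each $n < \omega$ so that any set closed under $F$ automatically contains $\omega$. Then closure of $\mathcal{N} \in \wp^*_\kappa(H_\theta)$ under $F$ is equivalent to $\mathcal{N} \prec \mathfrak{B}$, and it suffices to prove the key claim: whenever $\mathcal{N} \prec \mathfrak{B}$, $|\mathcal{N}| < \kappa$, and $\mathcal{N} \cap \kappa$ is transitive, then $\mathcal{N} \in D$.

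I would prove this key claim by transfinite induction on $|\mathcal{N}|$. In the base case $|\mathcal{N}| = \aleph_0$, enumerate $\mathcal{N} = \{x_n : n < \omega\}$ externally and recursively choose $w_{n+1}$ to be the $<^*$-least element of $D$ containing $w_n \cup \{x_n\}$. Elementarity of $\mathcal{N}$ together with unboundedness of $D$ ensures each $w_{n+1}$ can be taken in $\mathcal{N}$, and Fact \ref{fact_BasicElemSub} then forces $w_{n+1} \subseteq \mathcal{N}$ since $|w_{n+1}| < \kappa$ and $\mathcal{N} \cap \kappa$ is transitive. The chain $\langle w_n \rangle$ is $\subseteq$-increasing with union $\mathcal{N}$ (as $x_n \in w_{n+1}$), and closure of $D$ under $\omega$-chains gives $\mathcal{N} \in D$. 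For the inductive step with $|\mathcal{N}| = \mu > \aleph_0$, I would decompose $\mathcal{N}$ as a continuous $\subseteq$-increasing union $\bigcup_{\alpha < \mathrm{cf}(\mu)} \mathcal{N}_\alpha$ of elementary submodels of $\mathfrak{B}$, each of size $<\mu$ and each with transitive intersection with $\kappa$; the induction hypothesis gives $\mathcal{N}_\alpha \in D$, and closure of $D$ under $\mathrm{cf}(\mu)$-chains (with $\mathrm{cf}(\mu) < \kappa$) yields $\mathcal{N} \in D$.

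The main obstacle will be the chain decomposition in the inductive step, specifically ensuring that each $\mathcal{N}_\alpha$ has transitive intersection with $\kappa$. Naive Skolem hulls inside $\mathcal{N}$ of arbitrary subsets need not have this property, because Skolem functions can produce ordinals landing anywhere in $\mathcal{N} \cap \kappa$. The remedy is to seed each $\mathcal{N}_\alpha$ with an initial segment of $\mathcal{N} \cap \kappa$ of cofinal length (the easy subcase is $|\mathcal{N} \cap \kappa| < \mu$, where the whole of $\mathcal{N} \cap \kappa$ fits inside every $\mathcal{N}_\alpha$) and to use a pressing-down / closure argument to extract a club of stages $\alpha$ at which the Skolem hull's intersection with $\kappa$ coincides with the seeded initial segment. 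This is standard but fiddly, and is essentially the only substantive technicality beyond the basic Skolem-hull setup.
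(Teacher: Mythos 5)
First, a caveat: the paper does not prove this statement at all --- it cites it (Kueker, and Theorem 3.6 of Foreman's handbook chapter), so there is no in-paper proof to compare against. Judging your argument on its own merits: the $(\Leftarrow)$ direction and the countable base case of the key claim are correct, but the inductive step of the key claim has a fatal gap, and it is exactly at the point you flagged as "fiddly but standard." The decomposition you need --- writing $\mathcal{N}$ of size $\mu$ as a continuous increasing union of elementary submodels $\mathcal{N}_\alpha$ with $|\mathcal{N}_\alpha|<\mu$ and $\mathcal{N}_\alpha\cap\kappa$ transitive --- simply does not exist in general. If $\mathcal{N}_\alpha\cap\kappa$ is transitive then it is an ordinal $\delta_\alpha$ with $\delta_\alpha\subseteq\mathcal{N}_\alpha$, hence $\delta_\alpha\le|\mathcal{N}_\alpha|<\mu$, so $\sup_\alpha\delta_\alpha\le\mu$; but the union of the chain must satisfy $\sup_\alpha\delta_\alpha=\mathcal{N}\cap\kappa=\delta$. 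So the decomposition is impossible whenever $\delta>\mu$. This is not an edge case: for $\kappa=\aleph_2$ and $|\mathcal{N}|=\aleph_1$ one always has $\omega_1\in\mathcal{N}\cap\kappa$, hence $\delta>\omega_1=\mu$, so the inductive step fails for \emph{every} uncountable $\mathcal{N}$ there. Your proposed remedy (seeding with initial segments of $\delta$ cofinally) runs into the same counting obstruction: the seeded initial segments must be cofinal in $\delta$, and cofinally many proper initial segments of $\delta$ already have cardinality $\mu$ when $|\delta|=\mu<\delta$, so the pieces cannot have size $<\mu$.

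The standard repair abandons the induction on $|\mathcal{N}|$ over elementary submodels and instead uses the fact that a club $D\subseteq\wp^*_\kappa(H_\theta)$ is closed under $\subseteq$-\emph{directed} unions of size $<\kappa$, not merely under increasing chains. (Proof: by induction on $|E|$ for a directed $E\subseteq D$; fix a binary upper-bound function on $E$, filter $E$ as a continuous increasing union of smaller directed subfamilies closed under that function, apply the induction hypothesis to each, and take the resulting increasing chain of unions --- noting that each partial union still has transitive intersection with $\kappa$, being a union of ordinals.) With this in hand, your key claim follows in one step: for $\mathcal{N}\prec\mathfrak{B}=(H_\theta,\in,D,<^*,\kappa)$ with $\mathcal{N}\cap\kappa$ transitive and $|\mathcal{N}|<\kappa$, elementarity plus cofinality of $D$ shows that every finite subset of $\mathcal{N}$ is contained in some $w\in D\cap\mathcal{N}$, Fact \ref{fact_BasicElemSub} gives $w\subseteq\mathcal{N}$, and $D\cap\mathcal{N}$ is a directed family of size $<\kappa$ with union $\mathcal{N}$; hence $\mathcal{N}\in D$. (Alternatively one can bypass Skolem functions entirely: recursively choose a monotone $e\mapsto g(e)\in D$ on finite sets with $g(e)\supseteq e\cup\bigcup_{e'\subsetneq e}g(e')$, code $g$ into a finitary $F$ via an extra ordinal coordinate together with the function $e\mapsto\mathrm{otp}(g(e))$ so that transitivity of $z\cap\kappa$ forces $g(e)\subseteq z$, and again conclude by closure under directed unions.) Your write-up as it stands is missing this directed-union lemma, and without it the induction does not close.
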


\noindent While the distinction between finite sets and finite sequences is typically unimportant when dealing with structures that satisfy a large fragment of ZFC, the notation $[H_\theta]^{<\omega}$ from the statement of Theorem \ref{thm_Kueker} officially denotes the set of all finite \emph{sequences} from $H_\theta$.

The next theorem says, roughly, that if a class $\mathcal{K}$ of complexes is $\kappa$-a.e.\ closed under quotients and transfinite extensions, and stationarily many elements of $\wp^*_\kappa(H_\theta)$ are $\mathcal{K}$-reflecting, then in fact this is true for club-many.  This, together with an easy induction, yields part \ref{item_ElemSub} of Theorem \ref{thm_CharacterizeDecon} (and hence strong $<\kappa$-deconstructibility).

\begin{theorem}\label{thm_AbstractImplyDec}
Suppose $\kappa$ is regular and uncountable, $R$ is a $<\kappa$-Noetherian ring, and $\mathcal{K}$ is a class of complexes of $R$-modules.  Suppose:

\begin{enumerate}[label=(\Alph*)$_\kappa$]
 \item\label{item_StatManyKreflect} Whenever $M_\bullet \in \mathcal{K}$ and $M_\bullet \in H_\theta$ for some regular $\theta \ge \kappa$, the set
\[
S^{M_\bullet}_{\mathcal{K}\text{-ref}}:= \Big\{ \mathcal{N} \in \wp^*_\kappa(H_\theta) \ : \ \mathcal{N} \text{ is } \mathcal{K} \text{-reflecting at } M_\bullet \text{ (i.e., } M_\bullet \restriction \mathcal{N} \in \mathcal{K} \text{)} \Big\}
\]
is stationary in $\wp^*_\kappa(H_\theta)$.
 \item\label{item_Abstract_3_by_3}  $\mathcal{K}$ is $\kappa$-a.e.\ downward closed under quotients (Definition \ref{def_KappaAE}).

 \item\label{item_K_closed_transfExt} $\mathcal{K}$ is $\kappa$-a.e.\ downward closed under transfinite extensions (Definition \ref{def_KappaAE}).
\end{enumerate}

\noindent Then:
\begin{enumerate}
 \item If $M_\bullet \in \mathcal{K}$, the set $S^{M_\bullet}_{\mathcal{K}\text{-ref}}$ is actually club (not just stationary) in $\wp^*_\kappa(H_\theta)$. 

 \item\label{item_LargeModels} If $M_\bullet \in \mathcal{K}$, then for sufficiently large $\theta$:  whenever $\mathcal{N} \prec (H_\theta,\in,R,M_\bullet,\kappa,\mathcal{K} \cap H_\theta)$ and $\mathcal{N} \cap \kappa$ is transitive, both $M_\bullet \restriction \mathcal{N}$ and $M_\bullet / \mathcal{N}$ are in $\mathcal{K}$.
 
 \item\label{item_DecMOdGK} The class 
 \[
\mathcal{G}(\mathcal{K}):=\big\{ G \ : \ \exists M_\bullet \in \mathcal{K} \ \ G = \text{ker}(M_0 \to M_1)  \big\}
\]
is a strongly $<\kappa$-deconstructible class of modules.

 \item\label{item_DecComplex} $\mathcal{K}$ is a strongly $<\kappa$-deconstructible class of complexes.

\end{enumerate} 
\end{theorem}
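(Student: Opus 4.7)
The plan is to prove part (2) as the main work, then derive (1), (4), and (3) from it. For (2), I will proceed by induction on $\mu := |\mathcal{N} \cap H_{\theta_0}|$, where $\theta_0$ is a fixed regular cardinal with $M_\bullet, R, \kappa \in H_{\theta_0}$ so that $S := S^{M_\bullet}_{\mathcal{K}\text{-ref}} \cap \wp_\kappa^*(H_{\theta_0})$ is stationary by hypothesis (A). In the inductive step $\mu \ge \kappa$, I filter $\mathcal{N} \cap H_{\theta_0}$ as $\bigcup_\xi \mathcal{N}_\xi$ via Fact \ref{fact_SmoothChain}, apply the inductive hypothesis to each smaller $\mathcal{N}_\xi$, invoke (B) for the successor quotients, and conclude $M_\bullet \restriction \mathcal{N} \in \mathcal{K}$ by (C).

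The heart of the argument is the base case $\mu < \kappa$. Working inside $\mathcal{N}$---which, provided $\theta$ is chosen large enough that $S \in H_\theta$, believes $S$ is stationary by elementarity---I recursively construct a continuous $\subseteq$-increasing chain $\langle \mathcal{Q}_\zeta : \zeta < \eta \rangle$ of elementary submodels of $\mathcal{H}_{\theta_0}$ whose union is $\mathcal{N} \cap H_{\theta_0}$, with $\mathcal{Q}_\zeta \in \mathcal{N} \cap S$ at every successor stage. At each successor step, the existence of $\mathcal{Q}_\zeta \in \mathcal{N} \cap S$ extending the previous entries and absorbing the next element of an enumeration of $\mathcal{N} \cap H_{\theta_0}$ comes from $\mathcal{N}$'s belief that the club $\{\mathcal{Q} \in \wp_\kappa^*(H_{\theta_0}) : X \subseteq \mathcal{Q}\}$ meets $S$ for every $X \in \mathcal{N}$ of size $<\kappa$. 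Each successor gives $M_\bullet \restriction \mathcal{Q}_\zeta \in \mathcal{K}$ by $\mathcal{K}$-reflection; at limit stages, membership in $\mathcal{K}$ is secured by an auxiliary induction via (C); successor quotients $M_\bullet \restriction \mathcal{Q}_{\zeta+1} / M_\bullet \restriction \mathcal{Q}_\zeta$ then lie in $\mathcal{K}$ by (B); and (C) applied to the full chain yields $M_\bullet \restriction \mathcal{N} \in \mathcal{K}$. The complementary statement $M_\bullet / \mathcal{N} \in \mathcal{K}$ follows from Observation \ref{obs_EntireQuotient}.

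For (1), I apply (2) inside a suitably large hull $H_\lambda$: the club $C \subseteq \wp_\kappa^*(H_\lambda)$ of elementary submodels of $(H_\lambda, \in, R, M_\bullet, \kappa, \mathcal{K} \cap H_\lambda, H_\theta)$ with transitive $\kappa$-intersection all satisfy $M_\bullet \restriction \mathcal{N}' \in \mathcal{K}$ by (2). Since $H_\theta \in \mathcal{N}'$ and $M_\bullet \subseteq H_\theta$, the projection $\mathcal{N}' \cap H_\theta$ is elementary in $\mathcal{H}_\theta$ and satisfies $M_\bullet \restriction (\mathcal{N}' \cap H_\theta) = M_\bullet \restriction \mathcal{N}'$, so it too is $\mathcal{K}$-reflecting; hence $S^{M_\bullet}_{\mathcal{K}\text{-ref}}$ contains the club of such projections in $\wp_\kappa^*(H_\theta)$. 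Parts (4) and (3) follow from (2) via the \ref{item_ElemSub_COMPLEX} $\Rightarrow$ \ref{item_KappaDecon_COMPLEX} implication of Theorem \ref{thm_CharacterizeDecon_COMPLEXES} and the \ref{item_ElemSub} $\Rightarrow$ \ref{item_KappaDecon} implication of Theorem \ref{thm_CharacterizeDecon}, respectively. For (3), given $G \in \mathcal{G}(\mathcal{K}) \cap \mathcal{N}$, elementarity yields a witnessing $M_\bullet \in \mathcal{K} \cap \mathcal{N}$ with $G = \ker(M_0 \to M_1)$; applying (2) to $M_\bullet$ and using parts \eqref{item_KernelRestrict} and \eqref{item_KernelIsoQuot} of Lemma \ref{lem_ExactRestrctNoProjective}, the modules $\langle \mathcal{N} \cap G \rangle$ and $G / \langle \mathcal{N} \cap G \rangle$ are identified with the index-$0$ kernels of $M_\bullet \restriction \mathcal{N}$ and $M_\bullet / \mathcal{N}$, so both lie in $\mathcal{G}(\mathcal{K})$.

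The main obstacle is the chain construction in the base case of (2): the $\mathcal{Q}_\zeta$'s must be assembled inside $\mathcal{N}$ while simultaneously covering $\mathcal{N} \cap H_{\theta_0}$, maintaining the transitivity of $\mathcal{Q}_\zeta \cap \kappa$, and propagating $M_\bullet \restriction \mathcal{Q}_\zeta \in \mathcal{K}$ through the limit stages so that hypothesis (B) is applicable at each subsequent successor step. This propagation is precisely what the formulation of (B)---requiring both $M_\bullet \restriction \mathcal{N}$ and $M_\bullet \restriction \mathcal{N}'$ in its antecedent---is designed to support; see the commentary following Observation \ref{obs_EntireQuotient}.
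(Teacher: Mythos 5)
Your inductive step for part (2) and your derivations of parts (3) and (4) match the paper's argument, but the base case $|\mathcal{N}|<\kappa$ of part (2) has a genuine gap, at exactly the point your chain construction is meant to handle. You build a continuous $\subseteq$-increasing chain $\langle\mathcal{Q}_\zeta : \zeta<\eta\rangle$ with $\mathcal{Q}_\zeta\in\mathcal{N}\cap S$ at successor stages, using $\mathcal{N}$'s belief that $S$ meets the club of supersets of any $X\in\mathcal{N}$ of size $<\kappa$. This works at a successor of a successor, because there $\mathcal{Q}_\zeta\cup\{a_\zeta\}$ is an \emph{element} of $\mathcal{N}$. But at a successor of a limit $\lambda$, the set $\mathcal{Q}_\lambda=\bigcup_{\zeta<\lambda}\mathcal{Q}_\zeta$ is only a \emph{subset} of $\mathcal{N}$ (the enumeration of $\mathcal{N}\cap H_{\theta_0}$ and the chain itself are constructed externally), so elementarity cannot be invoked with $\mathcal{Q}_\lambda$ as a parameter, and nothing guarantees that any member of $\mathcal{N}\cap S$ contains $\mathcal{Q}_\lambda$. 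The family $\mathcal{N}\cap S$ is $\subseteq$-directed with union $\mathcal{N}\cap H_{\theta_0}$, but a directed family need not bound arbitrary $<\kappa$-sized subsets of its union ($\mathcal{N}\cap H_{\theta_0}$ itself is typically contained in no member). Your construction therefore only succeeds when the chain has length $\omega$, i.e.\ essentially when $\kappa=\aleph_1$; for the large $\kappa$ arising in the applications, $\mathcal{N}$ may be uncountable and limit stages are unavoidable. This is the standard obstruction separating ``stationarily many'' from ``club many'' reflecting models, and since part (2) quantifies over \emph{arbitrary} elementary submodels with transitive intersection with $\kappa$, you cannot assume $\mathcal{N}$ is internally approachable so as to keep initial segments of the chain inside $\mathcal{N}$.

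The paper resolves this by reversing your order of deduction. Part (1) is proved \emph{first}, directly from hypotheses \ref{item_Abstract_3_by_3} and \ref{item_K_closed_transfExt}: the union of a $<\kappa$-length $\subseteq$-increasing continuous chain from $S$ is again in $S$, because the corresponding restrictions of $M_\bullet$ form a $\mathcal{K}$-filtration; together with unboundedness (from stationarity) this makes $S$ a club. Kueker's Theorem \ref{thm_Kueker} then yields $F:[H_\theta]^{<\omega}\to H_\theta$ with $C_F\subseteq S$, and the base case of the induction becomes trivial for elementary submodels of the expanded structure $\mathfrak{A}_{M_\bullet}$ carrying $F$ in its signature; Lemma \ref{lem_NewAlgebraLemma} is what licenses replacing the structure $(H_\theta,\in,R,M_\bullet,\kappa,\mathcal{K}\cap H_\theta)$ in the statement of part (2) by this expansion. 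You should adopt this ordering: establish closure of $S$ under increasing unions first (your own use of \ref{item_Abstract_3_by_3} and \ref{item_K_closed_transfExt} already contains the needed computation), and then use Kueker's function in place of the internal chain construction to handle small $\mathcal{N}$.
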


\begin{proof}
Fix any $M_\bullet \in \mathcal{K} \cap H_\theta$.  Let $S:= S^{M_\bullet}_{\mathcal{K}\text{-ref}}$, which is stationary in $\wp^*_\kappa(H_\theta)$ by assumption.  Without loss of generality, every $\mathcal{N} \in S$ is an elementary submodel of $\mathcal{H}_\theta$.  To prove that $S$ is in fact a club, we need to show it is closed under $\subseteq$-increasing chains of length strictly less than $\kappa$, and to prove this it suffices to show this is true for such chains that are also $\subseteq$-continuous.  So assume $\eta < \kappa$ and $\langle \mathcal{N}_\xi \ : \ \xi < \eta \rangle$ is a $\subseteq$-increasing and $\subseteq$-continuous sequence of members of $S$, and let $\mathcal{N}_\eta:= \bigcup_{\xi < \eta} \mathcal{N}_\xi$.  Then $\mathcal{N}_\eta \in \wp^*_\kappa(H_\theta)$, and since $M_\bullet \in \mathcal{K}$ and each $\mathcal{N}_\xi$ is in $S$, assumption \ref{item_Abstract_3_by_3} ensures that $\frac{M_\bullet \restriction \mathcal{N}_{\xi+1}}{M_\bullet \restriction \mathcal{N}_\xi}$ is in $\mathcal{K}$ for all $\xi < \eta$
(here the $\mathcal{N}_\xi$ is playing the role of $\mathcal{N}$, and $\mathcal{N}_{\xi+1}$ is playing the role of the $\mathcal{N}'$, from Definition \ref{def_KappaAE}).  So $\langle M_\bullet \restriction \mathcal{N}_\xi \ : \ \xi < \eta \rangle$ is a $\mathcal{K}$-filtration of $M_\bullet \restriction \mathcal{N}_\eta$, and so assumption \ref{item_K_closed_transfExt} ensures that $M_\bullet \restriction \mathcal{N}_\eta $ is in $\mathcal{K}$.

So $S$ in fact is club in $\wp^*_\kappa(H_\theta)$.  By Kueker's Theorem \ref{thm_Kueker}, there is an $F:[H_\theta]^{<\omega} \to H_\theta$ such that whenever $\mathcal{N} \in \wp^*_\kappa(H_\theta)$ and $\mathcal{N}$ is closed under $F$, then $\mathcal{N} \in S$.  Let
\[
\mathfrak{A}_{M_\bullet} := \Big( H_\theta,\in,(F \restriction H_\theta^n)_{n \in \mathbb{N}}, R, M_\bullet \Big),
\]
and observe that elementary substructures of $\mathfrak{A}_{M_\bullet}$ are closed under $F$.  To finish off part \eqref{item_LargeModels}, by Lemma \ref{lem_NewAlgebraLemma} it suffices to prove that whenever $\mathcal{N} \prec \mathfrak{A}_{M_\bullet}$ and $\mathcal{N} \cap \kappa$ is transitive, then both $M_\bullet \restriction \mathcal{N}$ and $M_\bullet / \mathcal{N}$ are in $\mathcal{K}$.  We prove it by induction on $|\mathcal{N}|$.  The base case is when $|\mathcal{N}|<\kappa$:  suppose $\mathcal{N} \prec \mathfrak{A}_{M_\bullet}$, $|\mathcal{N}|<\kappa$, and $\mathcal{N} \cap \kappa$ is transitive.  Then $\mathcal{N} \in \wp^*_\kappa(H_\theta)$ and is closed under $F$, so $M_\bullet \restriction \mathcal{N} \in \mathcal{K}$.  Since $M_\bullet$ and $M_\bullet \restriction \mathcal{N}$ are both in $\mathcal{K}$, the $\kappa$-a.e.\ closure of $\mathcal{K}$ under quotients, implies $M_\bullet / \mathcal{N} \in \mathcal{K}$ (see Observation \ref{obs_EntireQuotient}).

  Now suppose $\mathcal{N} \prec \mathfrak{A}_{M_\bullet}$, $\mathcal{N} \cap \kappa$ is transitive, $|\mathcal{N}| \ge \kappa$, and the induction hypothesis holds for all elementary submodels of size $<|\mathcal{N}|$.  Let $\lambda:= |\mathcal{N}|$.  By a variant of Fact \ref{fact_SmoothChain} there is a $\subseteq$-increasing and $\subseteq$-continuous (but not necessarily $\in$-increasing) sequence $\langle \mathcal{N}_\xi \ : \ \xi < \text{cf}(\lambda) \rangle$ with union $\mathcal{N}$, such that each $\mathcal{N}_\xi$ is of size $<\lambda$, has transitive intersection with $\kappa$, and is elementary in $\mathcal{N}$ (and hence in $\mathfrak{A}_{M_\bullet}$).\footnote{It is possible here that $\mathcal{N}$ may be a ``Chang-type" structure; i.e., $|\mathcal{N}| = \lambda \ge \kappa$ but $\mathcal{N} \cap \kappa \in \kappa$.  But this does not affect the argument; one can just mimic the proof of Fact \ref{fact_SmoothChain} by building a $\subseteq$-increasing and continuous chain of elementary substructures of $\mathcal{N}$, all of which have the same intersection with $\kappa$ that $\mathcal{N}$ has.  This sequence will \textbf{not} be $\in$-increasing, but that is not needed here.}  So, by the induction hypothesis, $M_\bullet \restriction \mathcal{N}_\xi$ and $M_\bullet / \mathcal{N}_\xi$ are in $\mathcal{K}$ for all $\xi < \text{cf}(\lambda)$.  In particular, since $M_\bullet$, $M_\bullet \restriction \mathcal{N}_{\xi+1} $, and $M_\bullet \restriction \mathcal{N}_\xi$ are all members of $\mathcal{K}$, assumption \ref{item_Abstract_3_by_3} ensures that $\frac{M_\bullet \restriction \mathcal{N}_{\xi+1}}{M_\bullet \restriction \mathcal{N}_{\xi}}$ is in $\mathcal{K}$.  Since this holds for all $\xi$, assumption \ref{item_K_closed_transfExt} ensures that $M_\bullet \restriction \mathcal{N} = M_\bullet \restriction \bigcup_\xi \mathcal{N}_\xi$ is in $\mathcal{K}$.  Then, again by assumption \ref{item_Abstract_3_by_3}, $M_\bullet / \mathcal{N}$ is in $\mathcal{K}$.

To prove part \eqref{item_DecMOdGK}, suppose $G \in \mathcal{G}(\mathcal{K})$, as witnessed by some $M_\bullet \in \mathcal{K}$ such that $G = \text{ker}(M_0 \to M_1)$.  Consider any $\mathcal{N} \prec \mathfrak{A}_{M_\bullet}$ such that $\mathcal{N} \cap \kappa$ is transitive.  By part \eqref{item_LargeModels}, both $M_\bullet \restriction \mathcal{N}$ and $M_\bullet / \mathcal{N}$ are in $\mathcal{K}$.  Now, using that $R$ is $<\kappa$-Noetherian and $\mathcal{N} \cap \kappa$ is transitive, we know that the kernel of the 0-th map of $M_\bullet \restriction \mathcal{N}$ is $\langle \mathcal{N} \cap G \rangle$, and that the kernel of the 0-th map of $M_\bullet / \mathcal{N}$ is $G/\langle \mathcal{N} \cap G \rangle$.  Hence, $M_\bullet \restriction \mathcal{N}$ witnesses that $\langle \mathcal{N} \cap G \rangle$ is in $\mathcal{G}(\mathcal{K})$, and $M_\bullet / \mathcal{N}$ witnesses that $G/\langle \mathcal{N} \cap G \rangle$ is in $\mathcal{G}(\mathcal{K})$.  Then by the \ref{item_ElemSub} $\implies$ \ref{item_KappaDecon} direction of Theorem \ref{thm_CharacterizeDecon}, $\mathcal{G}(\mathcal{K})$ is strongly $<\kappa$-deconstructible (the use of $\mathfrak{A}_{M_\bullet}$ is justified by Lemma \ref{lem_NewAlgebraLemma}).  This proves part \eqref{item_DecMOdGK}.

Part \eqref{item_DecComplex} follows from part \eqref{item_LargeModels} together with Theorem \ref{thm_CharacterizeDecon_COMPLEXES} (note this is the only part of the theorem that relies on version Theorem \ref{thm_CharacterizeDecon_COMPLEXES} rather than Theorem \ref{thm_CharacterizeDecon}).  
\end{proof}

\section{Large Cardinals and $\mathcal{K}$-reflecting elementary submodels}\label{sec_WithHelpOfLC}

Theorem \ref{thm_AbstractImplyDec} listed three hypotheses on a class $\mathcal{K}$ of complexes that together guarantee the $<\kappa$-deconstructibility of $\mathcal{K}$ (and of certain associated classes of modules).  Corollary \ref{cor_KGP_KappaAEquottfext} showed that hypotheses \ref{item_Abstract_3_by_3} and \ref{item_K_closed_transfExt} \emph{always} hold for classes of the form $\mathcal{K}(\mathfrak{X}\text{-}\mathcal{GP})$---i.e., those complexes associated with $\mathfrak{X}$-Gorenstein projectivity---as long as the ring is $<\kappa$-Noetherian.  However, arranging that the hypothesis \ref{item_StatManyKreflect} of Theorem \ref{thm_AbstractImplyDec} holds (for some $\kappa$) seems to be tricky; we use large cardinals to guarantee it.

\subsection{Formula reflection and transitive collapses}

We will work with transitive collapses of elementary submodels of $\mathcal{H}_\theta$.  Note that $\mathcal{H}_\theta$ is extensional, so in particular, any elementary submodel of $\mathcal{H}_\theta$ is extensional (and wellfounded of course), and hence has a transitive collapse (see Jech~\cite{MR1940513}).

\begin{convention}\label{conv_MostCollapse}
If $\mathcal{N} \prec \mathcal{H}_\theta$, then $\mathcal{H}_{\mathcal{N}}$ will denote the transitive collapse of $\mathcal{N}$, and
\[
\sigma_{\mathcal{N}}: \mathcal{H}_{\mathcal{N}} \to_{\text{iso}} \mathcal{N} \prec \mathcal{H}_\theta
\]
will denote the inverse of the Mostowski collapsing map; note that $\sigma_{\mathcal{N}}$ is an elementary embedding from $\mathcal{H}_{\mathcal{N}}$ to $\mathcal{H}_\theta$.  For elements $b$ in the range of $\sigma_{\mathcal{N}}$---i.e., for $b \in \mathcal{N}$---we will often write $\bar{b}$ for $\sigma_{\mathcal{N}}^{-1}(b)$.
\end{convention}

Most of our results so far have dealt with situations where $R$ is $<\kappa$-Noetherian and $\mathcal{N} \prec \mathcal{H}_\theta$ has transitive intersection with $\kappa$.  From now on, however, we will typically make the stronger assumption that $R \subset \mathcal{N}$, as in the following lemma:

\begin{lemma}\label{lem_TrCollapseIsoComplex}
Suppose $R$ is an element \textbf{and subset} of $\mathcal{N}$, and $\mathcal{N} \prec \mathcal{H}_\theta$.  Then for any complex $M_\bullet$ of $R$-modules such that $M_\bullet \in \mathcal{N}$, the complex $M_\bullet \restriction \mathcal{N}$ is isomorphic (as a complex) to $\overline{M}_\bullet = \sigma_{\mathcal{N}}^{-1}(M_\bullet)$.
\end{lemma}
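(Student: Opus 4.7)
The plan is to exhibit the isomorphism explicitly using the Mostowski collapsing map $\pi := \sigma_{\mathcal{N}}^{-1} : \mathcal{N} \to \mathcal{H}_{\mathcal{N}}$, which (by Convention \ref{conv_MostCollapse}) is an $\in$-isomorphism onto the transitive collapse.

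First, I would unpack what $M_\bullet \restriction \mathcal{N}$ looks like under the strengthened hypothesis $R \subseteq \mathcal{N}$. For any $x \in \mathcal{N} \cap M_n$ and any $r \in R \subseteq \mathcal{N}$, elementarity of $\mathcal{N}$ in $\mathcal{H}_\theta$, applied to the scalar multiplication function on $M_n$ (which is in $\mathcal{N}$ by Fact \ref{fact_BasicElemSub}), yields $rx \in \mathcal{N} \cap M_n$. Hence $\mathcal{N} \cap M_n$ is already an $R$-submodule of $M_n$, so $\langle \mathcal{N} \cap M_n \rangle^{M_n}_R = \mathcal{N} \cap M_n$, and $M_\bullet \restriction \mathcal{N}$ is literally the sequence of $R$-modules $(\mathcal{N} \cap M_n)_{n \in \mathbb{Z}}$ equipped with the restricted differentials $f_n \restriction (\mathcal{N} \cap M_n)$.

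Next, I would unpack $\overline{M}_\bullet$ using the recursive definition of $\pi$. Since $M_n \in \mathcal{N}$, we have $\overline{M}_n = \pi(M_n) = \{\pi(x) : x \in \mathcal{N} \cap M_n\}$, so $\pi$ restricts to a bijection $\mathcal{N} \cap M_n \to \overline{M}_n$. Because each $f_n \in \mathcal{N}$, elementarity of $\sigma_{\mathcal{N}} : \mathcal{H}_{\mathcal{N}} \to \mathcal{H}_\theta$ gives $\overline{f}_n(\pi(x)) = \pi(f_n(x))$ for every $x \in \mathcal{N} \cap M_n$, so the family $\{\pi \restriction (\mathcal{N} \cap M_n)\}_{n \in \mathbb{Z}}$ is a chain isomorphism between $M_\bullet \restriction \mathcal{N}$ and $\overline{M}_\bullet$ at the level of abelian groups.

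Finally, I would handle the $R$ versus $\overline{R}$ bookkeeping. A priori $\overline{M}_\bullet$ is a complex of $\overline{R}$-modules, where $\overline{R} := \pi(R)$; the hypothesis $R \subseteq \mathcal{N}$ makes $\sigma_{\mathcal{N}} \restriction \overline{R} : \overline{R} \to R$ a bijection, and elementarity upgrades this bijection to a ring isomorphism. Pulling back the $\overline{R}$-action along this isomorphism turns $\overline{M}_\bullet$ into a complex of $R$-modules, and the identity $\pi(r) \cdot \pi(x) = \pi(rx)$ (again by elementarity) shows that the chain isomorphism constructed above is actually $R$-linear. The only real obstacle is this last piece of set-theoretic bookkeeping: elements of $R$ are generally non-transitive sets, so $\pi$ does not fix $R$ pointwise, and one must take care to identify $\overline{R}$ with $R$ via this canonical isomorphism before asserting that the Mostowski collapse induces an honest isomorphism of complexes of $R$-modules. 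Everything else is a direct reading-off of the elementarity of $\sigma_{\mathcal{N}}$.
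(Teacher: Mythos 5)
Your proposal is correct and follows essentially the same route as the paper's proof: identify $\langle \mathcal{N} \cap M_n \rangle$ with $\mathcal{N} \cap M_n$ using $R \subseteq \mathcal{N}$, observe that the collapsing map restricts to bijections $\mathcal{N} \cap M_n \to \overline{M}_n$ commuting with the differentials by elementarity, and transport the $\overline{R}$-action along the ring isomorphism $\sigma_{\mathcal{N}} \restriction \overline{R} : \overline{R} \to R$. The paper phrases the isomorphism via $\sigma_{\mathcal{N}}$ rather than its inverse and notes (in a footnote) that one may even assume $\sigma_{\mathcal{N}}$ fixes $R$, but these are cosmetic differences.
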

\begin{proof}
Say $M_\bullet = \Big( \xymatrix{M_n \ar[r]^-{f_n} & M_{n+1}} \Big)_{n \in \mathbb{Z}}$.  By elementarity of $\sigma_{\mathcal{N}}$, $\mathcal{H}_{\mathcal{N}} \models$ ``$\overline{M}_\bullet$ is a complex of $\overline{R}$-modules", and this is easily upward absolute to $V$.  Since $R \in \mathcal{N}$ and $R \subset \mathcal{N}$, $\sigma_{\mathcal{N}} \restriction \overline{R}$ is a ring isomorphism from $\overline{R}$ onto $R$.  So we can view $\overline{M}_\bullet$ as a complex of $R$-modules.\footnote{In fact, $R \subset \mathcal{N}$ and $R \in \mathcal{N}$ imply that $|R|$ is in the transitive part of $\mathcal{N}$; so we can without loss of generality assume that $\sigma_{\mathcal{N}}$ fixes $R$.} Also, since $R \subset \mathcal{N}$, $\mathcal{N} \cap M_n$ is already closed under scalar multiplication, so $\langle \mathcal{N} \cap M_n \rangle = \mathcal{N} \cap M_n$ for all $n \in \mathbb{Z}$.  It follows that for each $n \in \mathbb{Z}$, 
\[
\sigma_n:= \sigma_{\mathcal{N}} \restriction \overline{M}_n
\] 
is an isomorphism from $\overline{M}_n$ to $\mathcal{N} \cap M_n = \langle \mathcal{N} \cap M_n \rangle$.  Also, by elementarity of $\sigma_{\mathcal{N}}$,
\[
\bar{f}_n = \sigma_{n+1}^{-1} \circ \  \big( f_n \restriction (\mathcal{N} \cap M_n) \big) \ \circ \ \sigma_n
\]
for all $n \in \mathbb{Z}$.  So $\sigma_\bullet$ is an isomorphism from $\overline{M}_\bullet$ onto $M_\bullet \restriction \mathcal{N}$. 
\end{proof}

So by Lemma \ref{lem_TrCollapseIsoComplex}, if $R \cup \{ R \} \subset \mathcal{N} \prec \mathcal{H}_\theta$ and $\mathcal{K}$ is an isomorphism-closed class of complexes, asking whether $\mathcal{N}$ is $\mathcal{K}$-reflecting at some $M_\bullet \in \mathcal{N} \cap \mathcal{K}$ is the same as asking whether $\overline{M}_\bullet$ (the image of $M_\bullet$ under the transitive collapse of $\mathcal{N}$) is also in $\mathcal{K}$.  We will see in Corollary \ref{cor_FormulaSpecificRef} below that \emph{if} the parameters used in the definition of $\mathcal{K}$ aren't moved by the transitive collapsing map of $\mathcal{N}$, and \emph{if} both $\mathcal{H}_\theta$ and $\mathcal{H}_{\mathcal{N}}$ are in some sense ``correct" (from the point of view of the universe $V$) about membership in $\mathcal{K}$, then $\mathcal{N}$ will be $\mathcal{K}$-reflecting.  We first introduce a convenient definition:

\begin{definition}\label{def_FormulaReflect}
Consider a fixed first-order formula $\phi(v_1, \dots, v_k)$ in the language of set theory.
\begin{itemize}
 \item Given a transitive set $H$, we will say that \textbf{$\boldsymbol{(H,\in)}$ reflects the formula $\boldsymbol{\phi}$} if for all $a_1, \dots, a_k \in H$, $\phi(a_1,\dots,a_k)$ holds (in the universe) if and only if 
\[
(H,\in) \models \phi(\vec{a}).
\]
\textbf{Note:}  for a fixed $\phi$, ``$(H,\in)$ reflects the formula $\phi$" is first order expressible.\footnote{In fact, for fixed metamathematical natural number $n$, ``$(H,\in)$ reflects all $\Sigma_n$ formulas" is first order expressible.}  

\item If $\mathcal{N}$ is an elementary submodel of some $\mathcal{H}_\theta$, we will say that \textbf{$\boldsymbol{\mathcal{N}}$ transitively reflects the formula $\boldsymbol{\phi}$} if $\mathcal{H}_{\mathcal{N}}$ (the transitive collapse of $\mathcal{N}$) reflects the formula $\phi$.  

\item For a regular uncountable cardinal $\kappa$, we will say that \textbf{$\boldsymbol{\kappa}$ has $\boldsymbol{\phi}$-transitively reflecting models} if there are unboundedly many regular $\lambda$ such that:
\begin{enumerate}
 \item $\mathcal{H}_\lambda$ reflects the formula $\phi$; and
 
 \item The set 
 \[
T_{\phi,\kappa,\lambda}:= \big\{ \mathcal{N} \in \wp^*_\kappa(H_\lambda) \ : \ \mathcal{N} \text{ transitively reflects the formula } \phi   \big\}
 \]
 is stationary in $\wp_\kappa(H_\lambda)$.   
\end{enumerate}

\end{itemize}
\end{definition}

\begin{corollary}\label{cor_FormulaSpecificRef}
Suppose $\kappa$ is regular and uncountable, $R$ is a ring of size $<\kappa$, $\mathcal{K}$ is an isomorphism-closed class of complexes of $R$-modules, and $\mathcal{K}$ is (set-theoretically) definable from parameters in $H_\kappa$; i.e.\ there is some first order formula $\phi_{\mathcal{K}}$ in the language of set theory, and some fixed $p_1, \dots, p_k \in H_\kappa$, such that
\[
\mathcal{K} = \big\{ x \ : \ \phi_{\mathcal{K}}(x,\vec{p})  \big\}.
\]
Suppose $\lambda \ge \kappa$, $\mathcal{H}_\lambda$ reflects the formula $\phi_{\mathcal{K}}$, and $\mathcal{N} \prec \mathcal{H}_\lambda$ is such that $\{ R, p_1, \dots, p_k \} \subset \mathcal{N} \prec \mathcal{H}_\lambda$, $\mathcal{N} \cap \kappa$ is transitive, and $\mathcal{N}$ transitively reflects the formula $\phi_{\mathcal{K}}$.

Then $\mathcal{N}$ is $\mathcal{K}$-reflecting (in the sense of Definition \ref{def_K_reflect}).
\end{corollary}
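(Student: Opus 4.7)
The plan is to take an arbitrary $M_\bullet \in \mathcal{N} \cap \mathcal{K}$ and show $M_\bullet \restriction \mathcal{N} \in \mathcal{K}$. First I would reduce to a statement about the transitive collapse of $\mathcal{N}$: since $R \in \mathcal{N}$, $|R| < \kappa$, and $\mathcal{N} \cap \kappa$ is transitive, Fact \ref{fact_BasicElemSub} gives $R \subset \mathcal{N}$, and Lemma \ref{lem_TrCollapseIsoComplex} then yields an isomorphism of complexes of $R$-modules $M_\bullet \restriction \mathcal{N} \ \simeq\ \overline{M}_\bullet := \sigma_{\mathcal{N}}^{-1}(M_\bullet)$. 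Since $\mathcal{K}$ is closed under isomorphism, it suffices to prove $\overline{M}_\bullet \in \mathcal{K}$.

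The core of the argument is then to chase the formula $\phi_{\mathcal{K}}$ along the chain $V \supseteq \mathcal{H}_\lambda \succ \mathcal{N} \cong \mathcal{H}_{\mathcal{N}}$. Because $M_\bullet \in \mathcal{K}$, we have $V \models \phi_{\mathcal{K}}(M_\bullet,\vec{p})$; the hypothesis that $\mathcal{H}_\lambda$ reflects $\phi_{\mathcal{K}}$ pushes this down to $\mathcal{H}_\lambda \models \phi_{\mathcal{K}}(M_\bullet,\vec{p})$, and elementarity of $\mathcal{N} \prec \mathcal{H}_\lambda$ (applied to the parameters $M_\bullet,p_1,\dots,p_k \in \mathcal{N}$) delivers $\mathcal{N} \models \phi_{\mathcal{K}}(M_\bullet,\vec{p})$. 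Applying the Mostowski collapse isomorphism then gives
\[
\mathcal{H}_{\mathcal{N}} \models \phi_{\mathcal{K}}\bigl(\overline{M}_\bullet,\sigma_{\mathcal{N}}^{-1}(\vec{p})\bigr).
\]

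The crucial verification at this step is that $\sigma_{\mathcal{N}}^{-1}$ fixes each $p_i$. Since $p_i \in H_\kappa$, one has $|\mathrm{trcl}(p_i)| < \kappa$; combined with $p_i \in \mathcal{N}$ and the transitivity of $\mathcal{N} \cap \kappa$, iterating Fact \ref{fact_BasicElemSub} down through $\mathrm{trcl}(p_i)$ shows $\mathrm{trcl}(p_i) \cup \{p_i\} \subset \mathcal{N}$, so $p_i$ lies in the transitive part of $\mathcal{N}$ and hence is fixed by the collapse. Thus $\mathcal{H}_{\mathcal{N}} \models \phi_{\mathcal{K}}(\overline{M}_\bullet,\vec{p})$, and the hypothesis that $\mathcal{N}$ transitively reflects $\phi_{\mathcal{K}}$---meaning $\mathcal{H}_{\mathcal{N}}$ agrees with $V$ on $\phi_{\mathcal{K}}$---upgrades this to $V \models \phi_{\mathcal{K}}(\overline{M}_\bullet,\vec{p})$, i.e.\ $\overline{M}_\bullet \in \mathcal{K}$. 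Combined with the opening reduction, this completes the argument.

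There is no substantive obstacle here; the proof is essentially a sequence of four absoluteness/elementarity transfers, each of which is licensed directly by one hypothesis of the corollary. The only delicate bookkeeping point is the verification that $\vec{p}$ survives the collapse unchanged, which is exactly why the hypothesis ``$\vec{p} \in H_\kappa$'' (rather than just $\vec{p} \in H_\lambda$) is needed.
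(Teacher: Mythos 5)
Your proposal is correct and follows essentially the same route as the paper's proof: the identical chain of transfers $V \to \mathcal{H}_\lambda \to \mathcal{N} \to \mathcal{H}_{\mathcal{N}} \to V$, the observation that the collapse fixes each $p_i$ because $p_i \in \mathcal{N} \cap H_\kappa$ and $\mathcal{N} \cap \kappa$ is transitive, and the final appeal to Lemma \ref{lem_TrCollapseIsoComplex} plus isomorphism-closure of $\mathcal{K}$ (the paper merely performs the isomorphism reduction at the end rather than the beginning). Your slightly more detailed justification that $\mathrm{trcl}(p_i) \subset \mathcal{N}$ is exactly what the paper's one-line claim is implicitly relying on.
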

\begin{proof}
Fix any $M_\bullet \in \mathcal{N} \cap \mathcal{K}$.  By the definability assumption on $\mathcal{K}$, $(V,\in) \models \phi_{\mathcal{K}}(M_\bullet, \vec{p})$.  By the assumption that $\mathcal{H}_\lambda$ reflects the formula $\phi_{\mathcal{K}}$, $\mathcal{H}_\lambda \models \phi_{\mathcal{K}}(M_\bullet, \vec{p})$.  By elementarity of $\sigma_{\mathcal{N}}$, 
\begin{equation*}
\mathcal{H}_{\mathcal{N}} \models \phi_{\mathcal{K}}\big( \overline{M}_\bullet, \bar{p}_1, \dots, \bar{p}_k \big).
\end{equation*}
Since each $p_i$ is in $\mathcal{N} \cap H_\kappa$ and $\mathcal{N} \cap \kappa$ is transitive, it follows that $\sigma_{\mathcal{N}}$ fixes each $p_i$; so
\begin{equation*}
\mathcal{H}_{\mathcal{N}} \models \phi_{\mathcal{K}}\big( \overline{M}_\bullet, p_1, \dots, p_k \big)
\end{equation*}
Since $\mathcal{N}$ transitively reflects the formula $\phi_{\mathcal{K}}$---i.e., since $\mathcal{H}_{\mathcal{N}}$ reflects $\phi_{\mathcal{K}}$---we have
\[
(V,\in) \models \phi_{\mathcal{K}}(\overline{M}_\bullet, p_1, \dots, p_k).
\]
Hence, by the assumption regarding the definability of $\mathcal{K}$, $\overline{M}_\bullet$ is in $\mathcal{K}$.  Now $R \in \mathcal{N}$, $|R|<\kappa$, and $\mathcal{N} \cap \kappa$ is transitive, so by Fact \ref{fact_BasicElemSub}, $R$ is a subset of $\mathcal{N}$.   Then Lemma \ref{lem_TrCollapseIsoComplex} ensures that $\overline{M}_\bullet$ is isomorphic (as a complex of $R$-modules) to $M_\bullet \restriction \mathcal{N}$.  So by closure of $\mathcal{K}$ under isomorphism, $M_\bullet \restriction \mathcal{N} \in \mathcal{K}$.
\end{proof}

\begin{theorem}\label{thm_FormulaThm_implyDec}
Suppose $\kappa$ is regular and uncountable, $R$ is a ring if size $<\kappa$, and $\mathcal{K}$ is a class of complexes of $R$-modules that is set-theoretically definable from parameters in $H_\kappa$; say
\[
\mathcal{K} = \big\{ x \ : \ \phi_{\mathcal{K}}(x,p_1,\dots, p_k)  \big\}
\]
where $\phi_{\mathcal{K}}$ is a formula in the language of set theory and $p_1, \dots, p_k$ are in $H_\kappa$.  Suppose that:
\begin{enumerate}
 \item $\kappa$ has $\phi_{\mathcal{K}}$-transitively reflecting models (as in Definition \ref{def_FormulaReflect});

 \item $\mathcal{K}$ is $\kappa$-a.e.\ closed under quotients; and
 \item $\mathcal{K}$ is $\kappa$-a.e.\ closed under transfinite extensions.
\end{enumerate}

Then:
\begin{itemize}
 \item $\Big\{ G \ : \ \exists M_\bullet \in \mathcal{K} \ \ G = \text{ker}(M_0 \to M_1) \Big\}$ is a strongly $<\kappa$-deconstructible class of modules.

 \item  $\mathcal{K}$ is a strongly $<\kappa$-deconstructible class of complexes 
\end{itemize}
\end{theorem}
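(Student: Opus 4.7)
The plan is to reduce the theorem to an application of Theorem \ref{thm_AbstractImplyDec}. Two of the three hypotheses of that theorem, namely $\kappa$-almost-everywhere closure under quotients and under transfinite extensions, are exactly our assumptions (2) and (3). So it remains only to verify its hypothesis (A)$_\kappa$: for each $M_\bullet \in \mathcal{K}$ and each appropriate regular $\theta \ge \kappa$ with $M_\bullet \in H_\theta$, the set $S^{M_\bullet}_{\mathcal{K}\text{-ref}}$ of $\mathcal{K}$-reflecting members of $\wp^*_\kappa(H_\theta)$ should be stationary. Once this is done, parts (3) and (4) of Theorem \ref{thm_AbstractImplyDec} deliver exactly the two desired conclusions.

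To verify this stationarity, fix $M_\bullet \in \mathcal{K}$. By the $\phi_{\mathcal{K}}$-transitively reflecting models assumption, pick a regular $\theta \ge \kappa$ with $M_\bullet \in H_\theta$ such that $\mathcal{H}_\theta$ reflects $\phi_{\mathcal{K}}$ and $T_{\phi_{\mathcal{K}},\kappa,\theta}$ is stationary in $\wp_\kappa(H_\theta)$. Since $R, p_1,\ldots,p_k \in H_\kappa \subseteq H_\theta$, the set
\[
D := \{ \mathcal{N} \in \wp_\kappa(H_\theta) : \{R, p_1, \ldots, p_k, M_\bullet\} \subset \mathcal{N} \}
\]
is club in $\wp_\kappa(H_\theta)$, so $T_{\phi_{\mathcal{K}},\kappa,\theta} \cap D$ is stationary there. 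Each $\mathcal{N}$ in this intersection is an elementary submodel of $\mathcal{H}_\theta$ with transitive intersection with $\kappa$ that contains $R, \vec{p}$ and transitively reflects $\phi_{\mathcal{K}}$, so by Corollary \ref{cor_FormulaSpecificRef} it is $\mathcal{K}$-reflecting and, in particular, $\mathcal{K}$-reflecting at $M_\bullet$. Since the intersection is contained in $\wp^*_\kappa(H_\theta)$, its stationarity in $\wp_\kappa(H_\theta)$ upgrades automatically to stationarity in $\wp^*_\kappa(H_\theta)$, because every club in $\wp^*_\kappa(H_\theta)$ has the form $C_F \cap \wp^*_\kappa(H_\theta)$ for some $F \colon [H_\theta]^{<\omega} \to H_\theta$ by Theorem \ref{thm_Kueker}.

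The one delicate point is the universal quantifier over $\theta$ implicit in hypothesis (A)$_\kappa$: the reflection assumption supplies cofinally many workable $\theta$, but not \emph{every} regular $\theta \ge \kappa$. Inspecting the proof of Theorem \ref{thm_AbstractImplyDec}, however, shows that a single suitable $\theta$ per $M_\bullet$ suffices---the Kueker function $F$ extracted at that $\theta$, together with Lemma \ref{lem_NewAlgebraLemma}, is what actually drives the deconstructibility conclusions. If a uniform argument across all $\theta$ is preferred, a routine projection handles the remaining $\theta$: given a club $C \subseteq \wp^*_\kappa(H_\theta)$ witnessed by $F \colon [H_\theta]^{<\omega} \to H_\theta$, pick $\lambda > \theta$ satisfying the reflection hypothesis, trivially extend $F$ to $F' \colon [H_\lambda]^{<\omega} \to H_\lambda$, and find $\mathcal{N}' \in T_{\phi_{\mathcal{K}},\kappa,\lambda}$ closed under $F'$ with $\{R, p_1,\ldots,p_k, M_\bullet, H_\theta\} \subset \mathcal{N}'$. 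Because $M_\bullet$ and $R$ both lie inside the transitive set $H_\theta$, one has $M_\bullet \restriction \mathcal{N}' = M_\bullet \restriction (\mathcal{N}' \cap H_\theta)$, so $\mathcal{N} := \mathcal{N}' \cap H_\theta$ lies in $C$ and is $\mathcal{K}$-reflecting at $M_\bullet$, completing the verification.
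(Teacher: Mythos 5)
Your proposal is correct and follows essentially the same route as the paper, whose entire proof of this theorem is the one-line observation that it follows from Theorem \ref{thm_AbstractImplyDec} together with Corollary \ref{cor_FormulaSpecificRef}. Your verification of hypothesis \ref{item_StatManyKreflect} (including the projection argument handling the quantifier over $\theta$, and the passage from stationarity in $\wp_\kappa$ to $\wp^*_\kappa$ via Kueker's theorem) simply fills in details the paper leaves implicit.
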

\begin{proof}
This follows from Theorem \ref{thm_AbstractImplyDec} and Corollary \ref{cor_FormulaSpecificRef}.  We remark that the first bullet in the conclusion does \emph{not} rely on Theorem \ref{thm_CharacterizeDecon_COMPLEXES}, but instead relies (indirectly) on Theorem \ref{thm_CharacterizeDecon}.
\end{proof}

\subsection{Proof of \v{S}aroch's Theorem \ref{thm_Saroch} from supercompacts}\label{sec_SupercompSaroch}

\begin{definition}[Viale~\cite{Viale_GuessingModel}]\label{def_0_guess}
An elementary submodel $\mathcal{N}$ of $\mathcal{H}_\theta$ is called a \textbf{0-guessing set in  $\boldsymbol{\wp^*_{\kappa}}(H_\theta)$} if $\kappa \in \mathcal{N}$, $|\mathcal{N}|<\kappa$, $\mathcal{N} \cap \kappa$ is transitive, $\lambda:= \text{otp}(\mathcal{N} \cap \theta)$ is a cardinal, and the transitive collapse of $\mathcal{N}$ is $H_\lambda$.
\end{definition}

\begin{fact}\label{fact_Supercompact_0_guess}
If $\kappa$ is a supercompact cardinal, then for all cardinals $\theta > \kappa$, the set of 0-guessing sets in $\wp^*_\kappa(H_\theta)$ is stationary.  I.e., for all  expansions $\mathfrak{A}$ of $\mathcal{H}_\theta$ in a countable signature, there exists an $\mathcal{N} \prec \mathfrak{A}$ such that $\mathcal{N}$ is a 0-guessing set in $\wp^*_\kappa(H_\theta)$.

\end{fact}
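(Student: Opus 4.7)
The plan is to invoke a standard ``lifting'' argument for supercompactness. Fix $\theta > \kappa$ and an expansion $\mathfrak{A}$ of $\mathcal{H}_\theta$ in a countable signature; by Kueker's Theorem \ref{thm_Kueker}, stationarity of the 0-guessing sets in $\wp^*_\kappa(H_\theta)$ will follow if I can produce a single 0-guessing $N \prec \mathfrak{A}$, since any prescribed finitary $F : [H_\theta]^{<\omega} \to H_\theta$ can be absorbed into the signature of $\mathfrak{A}$.

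Using supercompactness of $\kappa$, I would choose $j : V \to M$ elementary with critical point $\kappa$, $j(\kappa) > |H_\theta|$, and $M^{|H_\theta|} \subseteq M$. The closure of $M$ guarantees both that $j[H_\theta] \in M$ and that $H_\theta^V = H_\theta^M$. I would then work inside $M$ with $N_0 := j[H_\theta]$ and verify, all from the perspective of $M$: that $N_0 \prec j(\mathfrak{A})$, because $j \restriction H_\theta$ is an isomorphism from $\mathfrak{A}$ onto the substructure of $j(\mathfrak{A})$ with universe $N_0$; that $|N_0|^M \le |H_\theta|^V < j(\kappa)$; that $N_0 \cap j(\kappa) = j[\kappa] = \kappa$ is transitive, since $j$ fixes ordinals below its critical point; that $\mathrm{otp}(N_0 \cap j(\theta)) = \mathrm{otp}(j[\theta]) = \theta$, which is a cardinal; and that the Mostowski collapse of $N_0$ (which is absolute between $V$ and $M$) is $H_\theta^V = H_\theta^M$. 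Thus $N_0$ is 0-guessing in $\wp^*_{j(\kappa)}(H_{j(\theta)})$ in the sense of Definition \ref{def_0_guess}, with $\lambda = \theta$.

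Having established, inside $M$, the sentence ``there exists a 0-guessing elementary submodel of $j(\mathfrak{A})$ in $\wp^*_{j(\kappa)}(H_{j(\theta)})$'', I would pull this sentence back to $V$ by elementarity of $j$, obtaining the desired $N \in \wp^*_\kappa(H_\theta)$ which is both 0-guessing and elementary in $\mathfrak{A}$.

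The one point requiring genuine care is the absoluteness of the clause ``the transitive collapse of $N_0$ equals $H_\lambda$'' in the definition of 0-guessing: the Mostowski collapse computation itself is absolute between transitive classes, so the issue reduces to the identity $H_\theta^V = H_\theta^M$, which is exactly what the closure assumption $M^{|H_\theta|} \subseteq M$ on the supercompact embedding provides. The remaining verifications are routine consequences of the fact that $\mathrm{crit}(j) = \kappa$ and that $j$ preserves order types.
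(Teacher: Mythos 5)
Your proposal is correct and follows essentially the same route as the paper's proof: the same lifting argument via a $|H_\theta|$-closed supercompactness embedding $j$, verifying in $M$ that $j[H_\theta]$ is a 0-guessing elementary submodel of $j(\mathfrak{A})$ (using $H_\theta^V=H_\theta^M$ for the collapse computation) and pulling back by elementarity. Your explicit verification of the order-type clause and the reduction to a single witness via Kueker's Theorem are details the paper leaves implicit, but the argument is the same.
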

\begin{proof}
Let $j: V \to M$ be an elementary embedding with critical point $\kappa$ such that $M$ is closed under $|H_\theta|$-length sequences and $j(\kappa) > |H_\theta|$.  Then $j[H_\theta]$ is an element of $M$, is a $<j(\kappa)$-sized elementary submodel of $j(\mathfrak{A})$ whose intesection with $j(\kappa)$ is an ordinal (namely, $\kappa$), and whose transitive collapse is exactly $H_\theta^V = H_\theta^M$.  So $M$ believes that $j[H_\theta]$ is a 0-guessing elementary submodel of $j(\mathfrak{A})$ that lies in $\wp^*_{j(\kappa)}(j(H_\theta))$.  By elementarity of $j$, there is a 0-guessing elementary submodel of $\mathfrak{A}$ that lies in $\wp^*_\kappa(H_\theta)$.
\end{proof}

Although we will not use it here, work of Viale~\cite{Viale_GuessingModel} and Magidor~\cite{MR0327518} shows that the converse of Fact \ref{fact_Supercompact_0_guess} also holds.

We now proceed to prove Theorem \ref{thm_Saroch} assuming a proper class of supercompacts.  Let $\phi(x,R)$ abbreviate the statement ``$x$ is an exact and $\text{Hom}_R(-,\text{Proj})$-exact complex of projective $R$-modules".  Let $R$ be any ring, and let $\kappa$ be a supercompact cardinal with $R \in H_\kappa$.  Let 
\[
\mathcal{K}(\mathcal{GP}_R) = \big\{ x \ : \ \phi(x,R) \big\}.
\]
I.e., $\mathcal{K}(\mathcal{GP}_R)$ is the class of all exact, $\text{Hom}_R(-,\text{Proj})$-exact complexes of projective $R$-modules.  By Corollary \ref{cor_KGP_KappaAEquottfext}, $\mathcal{K}(\mathcal{GP}_R)$ is $\kappa$-a.e.\ downward closed under quotients and transfinite extensions.  Now consider any regular $\lambda > \kappa$.  By Fact \ref{fact_Supercompact_0_guess}, there are stationarily many 0-guessing $\mathcal{N} \in \wp^*_\kappa(H_\lambda)$.  Fix such an $\mathcal{N}$; then $\mu:= \text{otp}(\mathcal{N} \cap \lambda)$ is a cardinal and $\mathcal{H}_{\mathcal{N}} = \mathcal{H}_\mu$.  By Corollary \ref{cor_WitnessNonExact}, both $\mathcal{H}_\lambda$ and $\mathcal{H}_\mu = \mathcal{H}_{\mathcal{N}}$ reflect the formula $\phi$ (so $\mathcal{N}$ transitively reflects the formula $\phi$).  So $\kappa$ has $\phi$-transitively reflecting models.  We have verified all the assumptions of Theorem \ref{thm_FormulaThm_implyDec}, which yields the desired decontructibility.

\subsection{Proof of Theorem \ref{thm_MainThm}}\label{sec_PfBigThms}

The proof of Theorem \ref{thm_MainThm} is almost identical to the proof of Theorem \ref{thm_Saroch} given above.  The main difference is that, for a general formula $\phi$ defining a class of complexes, it may not be the case that $\phi$ is ZFC-provably absolute between the universe of sets, and models of the form $\mathcal{H}_\mu$.  In other words, we may not always have a ZFC-provable analogue of Corollary \ref{cor_WitnessNonExact} to work with (though in many cases we do; see Section \ref{sec_Remarks_LC}).

\begin{remark}\label{rem_MetaMath}
We briefly address some metamathematical issues surrounding the statement and proof of Theorem \ref{thm_MainThm}.  Since we cannot quantify over classes in first order set theory, one should view these results as metamathematical statements about the consistency of a certain theory.  Vop\v{e}nka's Principle itself, as usually construed, is a scheme of first order sentences, rather than a single one; see \cite{MR2805294} for details.  We focus on part \ref{item_XGP_Decon} of the theorem, but similar comments apply to the other parts.  For each first order formula $\phi(u,R,p_1, \dots , p_k)$ in the language of set theory, let $\mathfrak{X}_{\phi,R,\vec{p}}$ denote the class
\[
\Big\{ x \ : \ \phi(x,R,p_1,\dots,p_k) \Big\}.
\]
Let $\phi_{\mathcal{GP}}(R,p_1,\dots, p_k)$ be the first order formula expressing that ``if $R$ is a ring and every member of $\mathfrak{X}_{\phi,R,\vec{p}}$ is an $R$-module, then the class of $\mathfrak{X}_{\phi,R,\vec{p}}$-Gorenstein Projective modules is deconstructible".  The proper class notations here---in particular, the use of $\mathfrak{X}_{\phi,R,\vec{p}}$---are just for convenience; the formula $\phi_{\mathcal{GP}}$ can be expressed in a first order manner.  Then part \ref{item_XGP_Decon} of the theorem is really saying:  in any model of ZFC that satisfies Vop\v{e}nka's Principle (scheme), the model also satisfies the first order theory consisting of ZFC together with, for each first order formula $\phi$ in the language of set theory, the following sentence:
\[
\forall R \ \forall p_1 \dots p_k \ \phi_{\mathcal{GP}}(R,p_1,\dots,p_k).
\]
\end{remark}

We are now ready to prove Theorem \ref{thm_MainThm}.  Vop\v{e}nka's Principle (VP) implies the following scheme (the proof that VP implies this scheme is relegated to the Appendix):
\begin{LC_Assumption}\label{lc_PhiRef}
For every formula $\phi$ in the language of set theory, there are unboundedly many $\kappa$ such that $\kappa$ has $\phi$-transitively reflecting models.
\end{LC_Assumption}

First we prove part \ref{item_MaxDecon} of Theorem \ref{thm_MainThm}.  Suppose $\mathcal{K}$ is an isomorphism-closed class of complexes of $R$-modules; say $\mathcal{K}$ is definable via the formula $\phi_{\mathcal{K}}$ and parameters $p_1,\dots, p_k$; i.e.
\[
\mathcal{K} =  \Big\{ x \ : \ \phi_{\mathcal{K}}(x,p_1,\dots,p_k) \Big\}.
\]
By assumption, there is some $\mu_{\mathcal{K}}$ such that for all regular $\kappa \ge \mu_{\mathcal{K}}$, $\mathcal{K}$ is $\kappa$-a.e.\ closed under quotients and transfinite extensions.  By the large cardinal assumption, there is a regular $\kappa$ such that $\kappa \ge \mu_{\mathcal{K}}$, 
\[
\{R,p_1,\dots,p_k \} \subset H_\kappa,
\]
 and $\kappa$ has $\phi_{\mathcal{K}}$-transitively reflecting models.  Then by Theorem \ref{thm_FormulaThm_implyDec}, $\mathcal{K}$ is a strongly $<\kappa$-deconstructible class of complexes.

Part \ref{item_XGP_Decon} of Theorem \ref{thm_MainThm} actually follows from part \ref{item_MaxDecon}, as follows.  Let $R$ be any ring, and $\mathfrak{X}$ be any (definable) class of $R$-modules; then there is a formula $\psi$ in the language of set theory, and some fixed parameter $p$, such that
\[
\mathfrak{X} = \big\{ x \ : \ (V,\in) \models \psi(x,p) \big\}.
\]
It follows that the class $\mathcal{K}(\mathfrak{X}\text{-}\mathcal{GP})$ of exact, $\text{Hom}_R(-,\mathfrak{X})$-exact complexes of projective $R$-modules is also definable from the parameter $p$; i.e., there is some formula $\phi_{\mathcal{K}(\mathfrak{X}\text{-}\mathcal{GP})}$ such that
\[
\mathcal{K}(\mathfrak{X}\text{-}\mathcal{GP}) = \{ c \ : \ \phi_{\mathcal{K}(\mathfrak{X}\text{-}\mathcal{GP})}(c,p) \}.
\]
By Corollary \ref{cor_KGP_KappaAEquottfext}, $\mathcal{K}(\mathfrak{X}\text{-}\mathcal{GP})$ is closed under transfinite extensions, and is eventually almost everywhere closed under quotients.  So by part \ref{item_MaxDecon}, $\mathcal{K}(\mathfrak{X}\text{-}\mathcal{GP})$ is deconstructible.  Then the class $\mathcal{K}(\mathfrak{X}\text{-}\mathcal{GP})$ of complexes is strongly $<\kappa$-deconstructible for some regular $\kappa > |R|$.  This implies that the class $\mathfrak{X}$-$\mathcal{GP}$ of modules is also strongly $<\kappa$-deconstructible, for the following reason.  Suppose $G \in \mathfrak{X}$-$\mathcal{GP}$; then there is some $P_\bullet \in \mathcal{K}(\mathfrak{X}\text{-}\mathcal{GP})$ such that $G = \text{ker}(P_0 \to P_1)$.  Then if $\langle P_\bullet^\xi \ : \ \xi < \eta \rangle$ is a filtration of $P_\bullet$ such that for all $\xi$, $P_\bullet^{\xi+1}/P_\bullet^\xi$ is in $\mathcal{K}(\mathfrak{X}\text{-}\mathcal{GP})$ with strongly $<\kappa$-presented modules at all indices $n \in \mathbb{Z}$, then
\[
\big\langle  \text{ker}(P_0^{\xi} \to P_1^\xi)  \ : \   \xi < \eta \big\rangle
\]
is an $\big(\mathfrak{X}$-$\mathcal{GP}\big)^{<\kappa}_s$-filtration of $G$.

\begin{remark}
Since the proof of part \ref{item_XGP_Decon} just given relies indirectly on Theorem \ref{thm_CharacterizeDecon_COMPLEXES}---for which we did not provide a full proof (though it is very similar to the proof of Theorem \ref{thm_CharacterizeDecon})---we briefly describe how one can prove part \ref{item_XGP_Decon} in a way that does not rely on Theorem \ref{thm_CharacterizeDecon_COMPLEXES}.  This proof closely mimics the proof of Theorem \ref{thm_Saroch} given in Section \ref{sec_SupercompSaroch}.

In this proof we use the large cardinal assumption \ref{lc_PhiRef} directly to find a $\kappa$ that has $\phi_{\mathcal{K}(\mathfrak{X}\text{-}\mathcal{GP})}$-transitively reflecting models, and that also is large enough to witness the eventual almost everywhere closure under quotients of the class $\mathcal{K}(\mathfrak{X}\text{-}\mathcal{GP})$ of complexes (which is possible by Corollary \ref{cor_KGP_KappaAEquottfext}).  Then by Theorem \ref{thm_FormulaThm_implyDec}, 
\[
\Big\{  G \ : \ \exists P_\bullet \in \mathcal{K}(\mathfrak{X}\text{-}\mathcal{GP}) \ \ G = \text{ker}(P_0 \to P_1)   \Big\}
\]
is strongly $<\kappa$-deconstructible; this class is just $\mathfrak{X}$-$\mathcal{GP}$.
\end{remark}

Before proving part \ref{item_MaxKaplansky} of Theorem \ref{thm_MainThm}, we need to define what a \emph{Kaplansky Class} is; this notion was introduced in Enochs and L\'{o}pez-Ramos~\cite{MR1926201}, though our definition is closer to Definition 10.1 of G\"obel-Trlifaj~\cite{MR2985554}.  Given a cardinal $\kappa$, a class $\mathcal{K}$ of complexes of modules is called a \textbf{$\boldsymbol{<\kappa}$-Kaplanky Class (of complexes)} if for all $M_\bullet \in \mathcal{K}$ and all sequences $\langle X_n \ : \ n  \in \mathbb{Z} \rangle$ such that $X_n \subset M_n$ and $|X_n|<\kappa$ for all $n \in \mathbb{Z}$, there exists a subcomplex $N_\bullet \subset M_\bullet$ such that $N_\bullet \in \mathcal{K}$, $M_\bullet / N_\bullet \in \mathcal{K}$, and for all $n \in \mathbb{Z}$, $X_n \subseteq N_n$ and $N_n$ is a $<\kappa$-presented module.  A class is a \textbf{Kaplansky Class} if it is a $<\kappa$-Kaplansky Class for some cardinal $\kappa$.

Now back to the proof of part \ref{item_MaxKaplansky} of Theorem \ref{thm_MainThm}.  Assume that $\mathcal{K}$ is eventually a.e.\ closed under quotients; so there is a $\mu_{\mathcal{K}}$ such that for all regular $\kappa \ge \mu_{\mathcal{K}}$, $\mathcal{K}$ is $\kappa$-a.e.\ closed under quotients (but we do \emph{not} assume $\mathcal{K}$ is closed under transfinite extensions).  Say
\[
\mathcal{K} = \big\{ c \ : \ \phi_{\mathcal{K}}(c,p) \big\}.
\]
By the large cardinal assumption \ref{lc_PhiRef}, there is a $\kappa$ such that $R,p \in H_\kappa$, $\kappa \ge \mu_{\mathcal{K}}$, and $\kappa$ has $\phi_{\mathcal{K}}$-transitively reflecting models.  Fix $M_\bullet \in \mathcal{K}$, and fix some sequence $\vec{X} = \langle X_n \ : \ n \in \mathbb{Z} \rangle$ such that $X_n$ is a $<\kappa$-sized subset of $M_n$ for all $n \in \mathbb{Z}$.  Since $\kappa$ has $\phi_{\mathcal{K}}$-transitively reflecting models, there is a regular $\lambda > \kappa$ such that $M_\bullet \in H_\lambda$, $H_\lambda$ reflects the formula $\phi_{\mathcal{K}}$, and there are stationarily many $\mathcal{N} \in \wp^*_\kappa(H_\lambda)$ that transitively reflect the formula $\phi_{\mathcal{K}}$.  Then there is such an $\mathcal{N}$ with $M_\bullet, \vec{X} \in \mathcal{N}$.  Corollary \ref{cor_FormulaSpecificRef} implies that $M_\bullet \restriction \mathcal{N}$ is in $\mathcal{K}$, and the $\kappa$-a.e.\ closure under quotients then implies that $M_\bullet / \mathcal{N}$ is in $\mathcal{K}$.  Since $\vec{X} \in \mathcal{N}$, it follows that each $X_n$ is a $<\kappa$-sized element of $\mathcal{N}$, so by Fact \ref{fact_BasicElemSub}, each $X_n$ is a subset of $\mathcal{N} \cap M_n$.  And $\langle \mathcal{N} \cap M_n \rangle = \mathcal{N} \cap M_n$ is $|\mathcal{N}|$-presented by Lemma \ref{lem_StrongGen}; in particular, $<\kappa$-presented.  So $M_\bullet \restriction \mathcal{N}$ is the desired subcomplex of $M_\bullet$.

\subsection{On the large cardinal assumption of Theorem \ref{thm_MainThm}}\label{sec_Remarks_LC}

Vop\v{e}nka's Principle allowed us to get, in part \ref{item_XGP_Decon} of Theorem \ref{thm_MainThm}, the deconstructibility of \emph{all} classes of the form $\mathfrak{X}$-$\mathcal{GP}$, without having to delve into the complexity or absoluteness properties of the particular formula defining the class of $\text{Hom}_R(-,\mathfrak{X})$-exact complexes.

However, for many particular, commonly-used instances of $\mathfrak{X}$, to get the deconstructibility of $\mathfrak{X}$-$\mathcal{GP}$, it suffices to assume ``mere" supercompactness (which is weaker in consistency strength than Vop\v{e}nka's Principle).  For example:
\begin{enumerate}[label=(\Alph*)]
 \item In our proof of \v{S}aroch's Theorem in Section \ref{sec_SupercompSaroch}, a supercompact above the size of the ring sufficed to get deconstructiblity of $\mathcal{GP}$.  In that proof, ``mere" 0-guessing models (as opposed to guessing models that transitively reflected more formulas) sufficed because of the ZFC-provable Corollary \ref{cor_WitnessNonExact}.
 \item   One can also get by with a supercompact when $\mathfrak{X}$ is the class of flat modules.  I.e., if $|R|<\kappa$ and $\kappa$ is supercompact, then the class $\mathfrak{X}$-$\mathcal{GP}_R$---also known as the \emph{Ding Projective} $R$-modules---is strongly $<\kappa$-deconstructible.  This requires an analysis similar to the one in Corollary \ref{cor_WitnessNonExact}, and ultimately relies on certain ZFC-provable absoluteness between the universe of sets and the $\mathcal{H}_\lambda$'s regarding flat modules (this analysis is closely related to the fact, proved by Enochs, that the class of flat $R$-modules is a $<|R|^+$-Kaplansky class).  Such technical analysis of the reflection properties of flatness are unnecessary if one is willing to throw caution to the wind and just assume Vop\v{e}nka's Principle.
\end{enumerate}

\section{Open questions}\label{sec_Questions}

\begin{question}
Are any of the conclusions of Theorem \ref{thm_Saroch} or \ref{thm_MainThm} provable in ZFC alone?  \end{question}

\begin{question}
Do any of the conclusions of Theorem \ref{thm_Saroch} or \ref{thm_MainThm} have large cardinal consistency strength? 
\end{question}

\appendix

\section{Vop\v{e}nka's Principle}\label{sec_APPENDIX_VP}

Here we point out why the large cardinal assumption \eqref{lc_PhiRef} follows from Vop\v{e}nka's Principle (VP).    We will make use of the Levy hierarchy of formulas.  For a (meta-mathematical) natural number $n$ and a transitive set $H$, ``$(H,\in) \prec_{\Sigma_n} (V,\in)$" is expressible in the language of set theory (and is in fact $\Sigma_n$-expressible for $n \ge 1$, see Kanamori~\cite{MR1994835}).  Let $C(n)$ denote the class of cardinals $\lambda$ such that $H_\lambda = V_\lambda$ and
\[
(H_\lambda,\in) \prec_{\Sigma_n} (V,\in).
\]
For fixed $n$, $C(n)$ is a definable, closed unbounded class of cardinals.  As in Bagaria et al.~\cite{MR3323199}, a cardinal $\kappa$ is called \textbf{C(n)-extendible} if for all $\lambda \in C(n)$ above $\kappa$, there exists some $\lambda'$ and some 
\[
j: \mathcal{H}_\lambda \to \mathcal{H}_{\lambda'}
\]
such that:
\begin{enumerate}
 \item $\lambda' \in C(n)$;
 \item $j$ is an elementary embedding;
 \item $\text{crit}(j) = \kappa$ and $\lambda < j(\kappa)$; and
 \item Both $\kappa$ and $j(\kappa)$ are in $C(n)$ (this will actually not be used).
\end{enumerate}

Bagaria et al.~\cite{MR3323199} proved that VP is equivalent to the following scheme: for every (meta-mathematical) natural number $n$, there is a proper class of $C(n)$-extendible cardinals. We use this characterization of VP to show:
\begin{lemma}\label{lem_VP_Sigman}
Assume VP.  Then for each $n \ge 1$, there are proper-class many $\kappa$ such that, for proper-class many $\lambda>\kappa$, the following holds:
\begin{enumerate}
 \item $\mathcal{H}_\lambda \prec_{\Sigma_n} (V,\in)$;
 \item\label{item_StatMany_n_ref} There are stationarily many $\mathcal{N} \in \wp^*_\kappa(H_\lambda)$ such that the transitive collapse of $\mathcal{N}$ is a $\Sigma_n$-elementary substructure of the universe.
\end{enumerate}
\end{lemma}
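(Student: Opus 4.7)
The plan is to exploit the theorem of Bagaria--Casacuberta--Mathias--Rosický cited in the excerpt: Vopěnka's Principle is equivalent to the scheme asserting, for each metamathematical $n$, the existence of a proper class of $C(n)$-extendible cardinals. I will show that for fixed $n\ge 1$, every $C(n+2)$-extendible cardinal $\kappa$ witnesses the conclusion of the lemma, with the ``proper-class many $\lambda$'' being the members of $C(n+2)$ above $\kappa$; clause (1) is then automatic because $C(n+2)\subseteq C(n)$.

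Fix such $\kappa$ and $\lambda\in C(n+2)$ with $\lambda>\kappa$. By Kueker's Theorem \ref{thm_Kueker}, verifying clause (2) amounts to showing that any countable-signature expansion $\mathfrak{A}$ of $\mathcal{H}_\lambda$ admits an elementary submodel $\mathcal{N}\in \wp^*_\kappa(H_\lambda)$ whose transitive collapse is $\Sigma_n$-elementary in $V$. By $C(n+2)$-extendibility, pick $\lambda'\in C(n+2)$ with $\lambda'>\lambda$ and an elementary embedding $j\colon \mathcal{H}_\lambda\to\mathcal{H}_{\lambda'}$ with $\mathrm{crit}(j)=\kappa$ and $j(\kappa)>\lambda$. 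The pointwise image $\mathcal{M}:=j[H_\lambda]$ is an elementary substructure of $j(\mathfrak{A})$: for each formula $\varphi(\vec v)$ in the signature and each $\vec a\in H_\lambda$, elementarity of $j\colon \mathfrak{A}\to j(\mathfrak{A})$ gives $\mathfrak{A}\models\varphi(\vec a)\Leftrightarrow j(\mathfrak{A})\models\varphi(j(\vec a))$, and $j$ itself is an isomorphism of $\mathfrak{A}$ onto the induced structure on $\mathcal{M}$. Moreover $\mathcal{M}\cap j(\kappa)=\kappa$ is an ordinal, $|\mathcal{M}|=|H_\lambda|<j(\kappa)$, and the transitive collapse of $\mathcal{M}$ is literally $H_\lambda$, which is $\Sigma_n$-elementary in $V$ because $\lambda\in C(n)$.

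Let $\Psi_n(x)$ denote the first-order formula ``$x$ is transitive and $(x,\in)\prec_{\Sigma_n}V$'', which lies at the $\Pi_{n+1}$ level of the Lévy hierarchy. Since $\lambda'\in C(n+1)$, $\Psi_n$ is correctly evaluated inside $\mathcal{H}_{\lambda'}$. Consequently, $\mathcal{H}_{\lambda'}$ satisfies the first-order sentence ``there exists $\mathcal{N}\in\wp^*_{j(\kappa)}(H_{\lambda'})$ with $\mathcal{N}\prec j(\mathfrak{A})$ and $\Psi_n$ holding of the transitive collapse of $\mathcal{N}$'', witnessed internally by $\mathcal{M}$. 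By elementarity of $j$, the analogous sentence holds in $\mathcal{H}_\lambda$, giving some $\mathcal{N}\in\wp^*_\kappa(H_\lambda)$ elementary in $\mathfrak{A}$ whose collapse satisfies $\Psi_n$ in the sense of $\mathcal{H}_\lambda$. Since $\lambda\in C(n+1)$, the $\Pi_{n+1}$ formula $\Psi_n$ applied to $\mathrm{collapse}(\mathcal{N})\in H_\lambda$ is absolute between $\mathcal{H}_\lambda$ and $V$, and the other conditions on $\mathcal{N}$ are bounded, hence absolute. So $\mathcal{N}$ is a genuine witness in $V$, and stationarity follows.

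The main obstacle is the Lévy-hierarchy bookkeeping: one needs to verify that ``is $\Sigma_n$-elementary in $V$'' is expressible as a single $\Pi_{n+1}$ formula, then arrange that both $\mathcal{H}_\lambda$ and $\mathcal{H}_{\lambda'}$ evaluate this formula correctly, and that the extendibility embedding $j$ carries the relevant existential statement between them without loss. The constant $n+2$ in ``$C(n+2)$-extendible'' is chosen to leave comfortable slack for this bookkeeping; a more careful count shows that $C(n+1)$-extendibility would already suffice, but the exact constant is inessential here. Aside from this, everything is a direct unpacking of the definitions, with $j$ doing the standard job of delivering $j[H_\lambda]$ as a canonical witness in the target model which can then be reflected back to the source by elementarity.
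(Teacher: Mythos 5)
Your overall strategy is the right one --- and it is the paper's: get $C(n)$-extendibility from VP via Bagaria et al., use a pointwise image $j[\cdot]$ as the witness in the target model, pull it back by elementarity of $j$, and handle ``$\Sigma_n$-correct in $V$'' via the definability of $\Sigma_n$-satisfaction. But there is a genuine gap in the execution: you apply the extendibility embedding $j\colon \mathcal{H}_\lambda\to\mathcal{H}_{\lambda'}$ at the \emph{same} level $\lambda$ at which you are trying to verify stationarity, and take $\mathcal{M}:=j[H_\lambda]$ as the ``internal witness'' in $\mathcal{H}_{\lambda'}$. Two things break. First, $j[H_\lambda]$ need not be an element of $H_{\lambda'}$: its elements have ranks cofinal in $\sup j[\lambda]$, which may equal $\lambda'$ (the definition of $C(n)$-extendibility gives you no control over this), so $\operatorname{trcl}(j[H_\lambda])$ can have size $\lambda'$ and $\mathcal{M}\notin H_{\lambda'}$. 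A set that is not an element of $H_{\lambda'}$ cannot witness an existential sentence of $\mathcal{H}_{\lambda'}$. Second, your expansion $\mathfrak{A}$ (equivalently, the Kueker function $F\colon [H_\lambda]^{<\omega}\to H_\lambda$) has domain all of $H_\lambda$, hence is not an element of $H_\lambda$; so $j(\mathfrak{A})$ is undefined and the elementarity of $j$ does not license transferring the sentence ``there exists $\mathcal{N}\prec j(\mathfrak{A})\dots$'' back to $\mathcal{H}_\lambda$.

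The paper's proof avoids both problems by separating the levels: to prove stationarity in $\wp^*_\kappa(H_{\lambda_0})$ for $\lambda_0\in C(n)$, it fixes a larger $\lambda\in C(n)$ with $\mathfrak{A}_0\in H_\lambda$, applies $C(n)$-extendibility to $\mathcal{H}_\lambda$, and uses $Z_0:=j[H_{\lambda_0}]$ as the witness. Since $H_{\lambda_0}\in H_\lambda$, the set $j[H_{\lambda_0}]$ is a subset of $j(H_{\lambda_0})\in H_{\lambda'}$ of bounded rank, hence a genuine element of $H_{\lambda'}$; and since $\mathfrak{A}_0\in H_\lambda$, the parameter $j(\mathfrak{A}_0)$ is defined and the pullback through $j$ is a legitimate instance of elementarity. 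The final step down from ``$\mathcal{H}_\lambda$ believes it'' to ``it is true in $V$'' uses $\mathcal{H}_\lambda\prec_{\Sigma_n}(V,\in)$, much as your $\Psi_n$ bookkeeping does (your Lévy-hierarchy accounting is fine in spirit, and using $C(n+2)$ rather than $C(n)$ costs nothing under VP). So the repair is structural, not quantitative: introduce the intermediate $\lambda_0<\lambda$ and take the pointwise image of the \emph{smaller} $H_{\lambda_0}$, not of the domain of the embedding itself.
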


\noindent Clearly the conclusion of the lemma implies the large cardinal assumption \eqref{lc_PhiRef}, since every formula is $\Sigma_n$ for some $n$.

\begin{proof}
Fix any $C(n)$-extendible cardinal $\kappa$, and consider any $\lambda_0 \in C(n)$ above $\kappa$, and an arbitrary $\mathfrak{A}_0=(H_{\lambda_0},\in,\dots)$ in a countable signature.  We need to find some $\mathcal{N}_0 \in \wp^*_\kappa(H_{\lambda_0})$ such that $\mathcal{N}_0 \prec \mathfrak{A}_0$, and the transitive collapse of $\mathcal{N}_0$ is $\Sigma_n$-correct (in the universe).  

Fix a $\lambda \in C(n)$ larger than $\lambda_0$; so in particular, $\mathfrak{A}_0$ is an element of $H_{\lambda}$.  Since the $\Sigma_n$ satisfaction relation is $\Sigma_n$-definable, and since $\mathcal{H}_\lambda \prec_{\Sigma_n} (V,\in)$, it suffices to show that $\mathcal{H}_{\lambda}$ believes there is an $\mathcal{N}_0 \in \wp^*_\kappa(H_{\lambda_0})$ with those properties.  By the $C(n)$-extendibility of $\kappa$, there is some $\lambda' \in C(n)$ and $j: \mathcal{H}_\lambda \to \mathcal{H}_{\lambda'}$ as in the definition of $C(n)$-extendibility.  First, note that since $\lambda_0$ and $\lambda'$ are both in $C(n)$, it follows that
\begin{equation}\label{eq_0_prec_prime}
\mathcal{H}_{\lambda_0} \prec_{\Sigma_n} \mathcal{H}_{\lambda'}.
\end{equation}
Let $Z_0:=j[H_{\lambda_0}]$.  Then $Z_0$ is a bounded subset of $H_{\lambda'} = V_{\lambda'}$, and hence $Z_0 \in \mathcal{H}_{\lambda'}$.  Work inside $\mathcal{H}_{\lambda'}$ for the moment.  Then $Z_0$ is of size strictly smaller than $j(\kappa)$, and $Z_0 \cap j(\kappa) = \kappa \in j(\kappa)$.  Also, the transitive collapse of $Z_0$ is $\mathcal{H}_{\lambda_0}$, which by \eqref{eq_0_prec_prime} is $\Sigma_n$-correct.  Finally, elementarity of $j$ easily yields that $Z_0$ is an elementary substructure of $j(\mathfrak{A}_0)$.  So $\mathcal{H}_{\lambda'} \models$ ``There exists an element of $\wp^*_{j(\kappa)}(j(H_{\lambda_0}))$ that is an elementary substructure of $j(\mathfrak{A}_0)$, and that is $\Sigma_n$-transitively correct".  By elementarity of $j$, $\mathcal{H}_{\lambda}$ believes there is an element of $\wp^*_\kappa(H_{\lambda_0})$ that is elementary in $\mathfrak{A}_0$ and $\Sigma_n$-transitively correct.  
\end{proof}

The corollary below suppresses most of the logical aspects discussed above, which might make it more usable than Lemma \ref{lem_VP_Sigman} or the large cardinal principle \eqref{lc_PhiRef}.  By a \textbf{relation} we mean a (finitary) set-theoretically definable relation, possibly with suppressed parameters.  I.e., $P$ is a relation if $P \subseteq V^n$ for some (meta-mathematical) natural number $n$, and there is a formula $\phi(u_1,\dots,u_n, w_1,\dots,w_k)$ in the language of set theory, and parameters (sets) $p_1,\dots,p_k$, such that 
\begin{equation}\label{eq_P_def}
P = \Big\{ (x_1,\dots,x_n) \ : \ \phi(x_1,\dots,x_n, p_1,\dots,p_k)  \Big\}.
\end{equation}
Partial class functions are defined similarly.

If $b \in \mathcal{N} \prec_{\Sigma_1} (V,\in)$, let $b_{\mathcal{N}}$ denote the image of $b$ under the transitive collapsing map of $\mathcal{N}$.  For an $\mathcal{N} \prec_{\Sigma_1}(V,\in)$ and a relation $P$, let us say that \textbf{$\boldsymbol{\mathcal{N}}$ reflects $\boldsymbol{P}$} if for all $a_1,\dots,a_n \in \mathcal{N}$,
\[
(a_1,\dots,a_n) \in P \ \iff \ \Big( (a_1)_{\mathcal{N}},\dots,(a_k)_{\mathcal{N}}\Big) \in P
\]
(where $n$ is the arity of $P$).  If $F: V^n \to V$ is a partial class function, we say that \textbf{$\boldsymbol{\mathcal{N}}$ reflects $\boldsymbol{F}$} if:
\begin{enumerate}
 \item $\mathcal{N}$ is closed under $F$; and
 \item\label{item_CommutesWithF} for each $x_1,\dots,x_n \in \mathcal{N}$: $F(x_1,\dots,x_n)$ is defined if and only if $F\Big( (x_1)_{\mathcal{N}},\dots, (x_n)_{\mathcal{N}} \Big)$ is defined; and if they are defined, then 
  \[
  F\Big( (x_1)_{\mathcal{N}},\dots, (x_n)_{\mathcal{N}} \Big) = \Big( F(x_1,\dots,x_n) \Big)_{\mathcal{N}}.
  \]
  
\end{enumerate} 
\noindent In other words, $\mathcal{N}$ reflects $F$ if $(-)_{\mathcal{N}}$ commutes with $F$.  Note that closure of $\mathcal{N}$ under $F$ is needed in order for the right side of the equation in requirement \eqref{item_CommutesWithF} to be defined. 

\begin{corollary}\label{lem_VP_Reflect}
Assume Vop\v{e}nka's Principle.  Let $P_1, P_2,\dots,P_\ell$ be a finite list of relations, $F_1,\dots, F_k$ a finite list of class functions, and let $n$ be a natural number.  Then there is a proper class of regular $\kappa$ with the following property: for every set $r$, there is an $\mathcal{N} \in \wp^*_\kappa(V)$ such that $r \in \mathcal{N} \prec_{\Sigma_n}(V,\in)$ and $\mathcal{N}$ reflects each $P_i$ and each $F_j$.

\end{corollary}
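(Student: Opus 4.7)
The plan is to reduce the corollary to a single application of Lemma \ref{lem_VP_Sigman} by bounding all the formula complexities and parameter sizes simultaneously. First I would write each relation as $P_i = \{\vec x : \phi_i(\vec x,\vec p^{(i)})\}$ for a fixed formula $\phi_i$ in the language of set theory and a fixed parameter tuple $\vec p^{(i)}$, and each function as $F_j(\vec x) = y \iff \psi_j(\vec x,y,\vec q^{(j)})$ for fixed $\psi_j,\vec q^{(j)}$. Let $\vec p$ denote the concatenation of all parameters occurring in these finitely many definitions (a finite tuple of sets), and let $m$ be the maximum of $n$ and the Levy complexities of $\phi_1,\dots,\phi_\ell,\psi_1,\dots,\psi_k$, so that each of these formulas (and its negation) may be regarded both as $\Sigma_m$ and as $\Pi_m$.

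Next, apply Lemma \ref{lem_VP_Sigman} with $m$ in place of $n$. This produces a proper class of regular $\kappa$ for which there are proper-class many $\lambda>\kappa$ satisfying $\mathcal{H}_\lambda \prec_{\Sigma_m}(V,\in)$ and for which the set
\[
S_\kappa^\lambda := \big\{\mathcal{N}\in\wp^*_\kappa(H_\lambda) \ : \ \mathcal{H}_{\mathcal{N}} \prec_{\Sigma_m}(V,\in)\big\}
\]
is stationary in $\wp_\kappa(H_\lambda)$. I would then discard the (bounded) set of $\kappa$ that do not exceed $|\text{trcl}(p)|$ for some $p\in\vec p$; the remaining $\kappa$'s still form a proper class, and each of them satisfies $|\text{trcl}(p)| < \kappa$ for all parameters $p$.

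Fix such a $\kappa$ and any set $r$. Choose $\lambda$ large enough that $\{r\}\cup\vec p\subseteq H_\lambda$ and $\mathcal{H}_\lambda\prec_{\Sigma_m}(V,\in)$, and let $\mathfrak{A}$ be an expansion of $\mathcal{H}_\lambda$ in a countable signature containing constants for $r$, $\kappa$ and the finitely many entries of $\vec p$. By stationarity of $S_\kappa^\lambda$, there is an $\mathcal{N} \in S_\kappa^\lambda$ with $\mathcal{N}\prec\mathfrak{A}$; the constants force $\{r\}\cup\vec p\subseteq\mathcal{N}$, and the combination $|\text{trcl}(p)|<\kappa$ together with transitivity of $\mathcal{N}\cap\kappa$ yields, by iterated use of the last part of Fact \ref{fact_BasicElemSub}, that $\text{trcl}(p)\subseteq\mathcal{N}$ for each $p\in\vec p$. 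Consequently the inverse collapse map $\sigma_{\mathcal{N}}$ of Convention \ref{conv_MostCollapse} fixes each $p\in\vec p$ pointwise.

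Verifying the three desiderata from this setup should then be routine. For $\mathcal{N}\prec_{\Sigma_n}(V,\in)$: compose full elementarity $\mathcal{N}\prec\mathcal{H}_\lambda$ with $\Sigma_m$-correctness of $\mathcal{H}_\lambda$ and use $n\le m$. For the reflection of a relation $P_i$: for $\vec a\in\mathcal{N}$, chain the equivalences $V\models\phi_i(\vec a,\vec p^{(i)}) \Leftrightarrow \mathcal{H}_\lambda\models\phi_i(\vec a,\vec p^{(i)}) \Leftrightarrow \mathcal{N}\models\phi_i(\vec a,\vec p^{(i)}) \Leftrightarrow \mathcal{H}_{\mathcal{N}}\models\phi_i(\vec a_{\mathcal{N}},\vec p^{(i)}_{\mathcal{N}}) = \phi_i(\vec a_{\mathcal{N}},\vec p^{(i)}) \Leftrightarrow V\models\phi_i(\vec a_{\mathcal{N}},\vec p^{(i)})$, using $\Sigma_m$-correctness of $\mathcal{H}_\lambda$ and of $\mathcal{H}_{\mathcal{N}}$, the isomorphism between $\mathcal{N}$ and $\mathcal{H}_{\mathcal{N}}$, and the fact that the collapse fixes $\vec p^{(i)}$. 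For each function $F_j$, the same chain applied to $\exists y\,\psi_j(\vec x,y,\vec q^{(j)})$ gives closure of $\mathcal{N}$ under $F_j$ and agreement of the domains of $F_j$ on $\mathcal{N}$ and on $\mathcal{H}_{\mathcal{N}}$, while applying it to $\psi_j(\vec x,y,\vec q^{(j)})$ forces $F_j(\vec x_{\mathcal{N}}) = (F_j(\vec x))_{\mathcal{N}}$. The one point that takes any care is exactly this fixing of the defining parameters by the collapse, which is the motivation for restricting to $\kappa > \max_{p\in\vec p}|\text{trcl}(p)|$; beyond that, the argument is bookkeeping on top of Lemma \ref{lem_VP_Sigman}.
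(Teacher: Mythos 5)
Your proposal is correct and follows essentially the same route as the paper: bound all defining formulas by a single Levy level $m\ge n$, invoke Lemma \ref{lem_VP_Sigman} at that level, arrange that the defining parameters lie in $H_\kappa$ so that (via Fact \ref{fact_BasicElemSub} and transitivity of $\mathcal{N}\cap\kappa$) the collapsing map fixes them, and then run the chain of equivalences through $\mathcal{H}_\lambda$, $\mathcal{N}$, $\mathcal{H}_{\mathcal{N}}$, and $V$. The only cosmetic difference is that the paper treats only the functions $F_j$ explicitly (declaring the relations to be handled "without loss of generality"), whereas you spell out both cases.
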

\begin{proof}
Without loss of generality we can just deal with the functions $F_1,\dots,F_k$.   Say $F_i$ has arity $m_i$, and for each $i \le k$ let $\phi_i$ be the formula, and $p_{i,1},\dots,p_{i,h_i}$ be the parameters, that define the function $F_i$; i.e., such that for all $x_1,\dots,x_{m_i}$ and all $y$,  
\[
F_i(x_1,\dots,x_{m_i}) = y \text{ if and only if }  \phi_i(y,x_1,\dots,x_{m_i}, p_{i,1},\dots,p_{i,h_i}).
\]

Let $s$ be the (finite) set of all the $p_{i,j}$'s.  Without loss of generality (increasing $n$ if necessary) we can assume that each $\phi_i$ is a $\Sigma_n$ formula, and $n \ge 1$.  Then the formula
\[
\exists w \ \ \phi(w,u_1,\dots, u_{m_i}, v_{i,1},\dots,v_{i,h_i})
\]
is also $\Sigma_n$, and hence any $\Sigma_n$-elementary substructure of the universe that contains $s$ (as a subset) will be closed under each $F_i$.

 Let $\kappa$ be as in the conclusion of Lemma \ref{lem_VP_Sigman} (with respect to $n$), and such that $s \in H_\kappa$; the latter is possible because the lemma gives us a proper class of $\kappa$ with the desired properties.  Fix any set $r$.  By Lemma \ref{lem_VP_Sigman} there is a $\lambda$ such that $r \in \mathcal{H}_\lambda \prec_{\Sigma_n} (V,\in)$, and some $\mathcal{N} \in \wp^*_\kappa(H_\lambda)$ with $\{ s,r \} \subset \mathcal{N} \prec \mathcal{H}_\lambda \prec_{\Sigma_n} (V,\in)$, such that the transitive collapse $\mathcal{H}_{\mathcal{N}}$ of $\mathcal{N}$ is a $\Sigma_n$-elementary substructure of the universe.  Also, $s$ is an element of $\mathcal{N}$, $s$ is a finite subset of $H_\kappa$, and $\mathcal{N} \cap \kappa$ is transitive; it follows from Fact \ref{fact_BasicElemSub} that
\begin{equation*}
s \subset \mathcal{N}, \text{ and the collapsing map of } \mathcal{N} \text{ fixes each member of } s.
\end{equation*}
By the earlier remarks, 
\[
\mathcal{H}_\lambda, \ \mathcal{N}, \text{ and } \mathcal{H}_{\mathcal{N}} \text{ are all closed under each } F_i.  
\]

Consider a fixed $i$; let $F$ denote $F_i$ and drop the other $i$ subscripts for notational simplicity.  Fix $(a_1,\dots,a_m) \in \mathcal{N}$.  Then 
\begin{align*}
& F(a_1,\dots,a_m) \text{ is defined }   \\
\iff & (V,\in)  \models     \ \exists y \  \phi(y,a_1,\dots,a_m, p_1,\dots,p_h) & \text{(definition of $F$)} \\
\iff & \mathcal{H}_\lambda  \models     \ \exists y \  \phi(y,a_1,\dots,a_m, p_1,\dots,p_h) & (\Sigma_n \text{-elementarity of } \mathcal{H}_\lambda ) \\
\iff & \mathcal{N}  \models   \exists y \  \phi(y,a_1,\dots,a_m, p_1,\dots,p_h) & ( \mathcal{N} \prec \mathcal{H}_\lambda ) \\
\iff & \mathcal{H}_{\mathcal{N}}  \models  \ \exists y \   \phi\Big(y, (a_1)_{\mathcal{N}},\dots,(a_m)_{\mathcal{N}}, (p_1)_{\mathcal{N}},\dots,(p_h)_{\mathcal{N}} \Big) & \text{(elementarity of collapsing map)} \\
\iff & \mathcal{H}_{\mathcal{N}}  \models  \ \exists y \   \phi\Big(y, (a_1)_{\mathcal{N}},\dots,(a_m)_{\mathcal{N}}, p_1,\dots,p_h \Big) & \text{(collapsing map fixes members of $s$)} \\
\iff & (V,\in)  \models  \ \exists y \   \phi\Big(y, (a_1)_{\mathcal{N}},\dots,(a_m)_{\mathcal{N}}, p_1,\dots,p_h \Big) & \text{($\Sigma_n$-elementarity $\mathcal{H}_{\mathcal{N}}$)} \\
\iff & F\Big( (a_1)_{\mathcal{N}}, \dots, (a_m)_{\mathcal{N}} \Big) \text{ is defined}  & \text{(definition of $F$).} \\
\end{align*}

\noindent Basically the same argument (using $\phi$ instead of $\exists y \  \phi$) shows that if $F(a_1,\dots,a_m)$ is defined, then the preimage of $F(a_1,\dots,a_m)$ under the collapsing map of $\mathcal{N}$ is the same as $F\Big( (a_1)_{\mathcal{N}}, \dots, (a_m)_{\mathcal{N}}  \Big)$.

\end{proof}

\begin{bibdiv}
\begin{biblist}
\bibselect{../../../MasterBibliography/Bibliography}
\end{biblist}
\end{bibdiv}

\end{document}